\documentclass[11pt]{amsart}

\usepackage{fullpage}

\usepackage[T1]{fontenc}
\usepackage{lmodern}
\usepackage{microtype}
\usepackage[a4paper,margin=1in]{geometry}
\usepackage{amsmath,amssymb,amsthm,mathtools}
\usepackage{enumitem}
\usepackage{xcolor}
\definecolor{hot}{RGB}{65,105,225}
\usepackage[pagebackref=true,colorlinks=true, linkcolor=hot ,  citecolor=hot, urlcolor=hot]{hyperref}
\usepackage{microtype}

\newtheorem{theorem}{Theorem}[section]
\newtheorem{proposition}[theorem]{Proposition}
\newtheorem{lemma}[theorem]{Lemma}
\newtheorem{corollary}[theorem]{Corollary}
\theoremstyle{definition}
\newtheorem{remark}[theorem]{Remark}

\newtheorem{example}[theorem]{Example}
\DeclareMathOperator{\Sym}{Sym}
\DeclareMathOperator{\rank}{rank}

\DeclareMathOperator{\codim}{codim}

\DeclareMathOperator{\Bl}{Bl}
\DeclareMathOperator{\Gr}{Gr}

\DeclareMathOperator{\Tot}{Tot}
\DeclareMathOperator{\PGL}{PGL}

\DeclareMathOperator{\Mat}{Mat}
\DeclareMathOperator{\diag}{diag}
\DeclareMathOperator{\Hom}{Hom}

\DeclareMathOperator{\GL}{GL}
\DeclareMathOperator{\id}{id}
\DeclareMathOperator{\coker}{coker}

\newcommand{\cO}{\mathcal O}
\newcommand{\PP}{\mathbf P}
\newcommand{\CC}{\mathbf C}
\newcommand{\OO}{\mathcal O}
\newcommand{\cF}{\mathcal F}
\newcommand{\cK}{\mathcal K}
\newcommand{\cG}{\mathcal G}

\newcommand{\cZ}{\mathcal Z}
\newcommand{\A}{\mathbb{A}}
\newcommand{\C}{\mathbb{C}}

\newcommand{\wedgeTwo}{\bigwedge\nolimits^2}

\newcommand{\Ztop}{Z^{\mathrm{top}}}

\title{Local topological zeta functions of generic symmetric and catalecticant determinantal varieties}
\author{}
\date{}

\author{An-Khuong Doan}\address{Institute for Advanced Studies in Mathematics, Harbin Institute of Technology, Harbin, China 150001}\email{an-khuong.doan@hit.edu.cn}
\begin{document}
\begin{abstract}
We give explicit formulas for the local topological zeta functions at the origin of generic symmetric determinantal varieties and catalecticant determinantal varieties associated with generic Hankel matrices. In the symmetric determinant hypersurface case, the formula simplifies to a closed product expression and implies the Strong Monodromy Conjecture. For catalecticant determinantal varieties, we establish a simultaneous local reduction of the determinantal ideals and obtain a closed product formula for their local topological zeta functions. Together with the known Bernstein–Sato polynomial of the Hankel determinant, this also implies the Strong Monodromy Conjecture in the square catalecticant determinant case. The method used in this paper also gives short proofs of Chen–Zuo/Docampo's formula in the generic determinantal case and Chen–Shi–Xu–Zuo's formula in the skew-symmetric case.

\end{abstract}
\setcounter{tocdepth}{1}
\maketitle
\tableofcontents
\section{Introduction}
Topological zeta functions, introduced by Denef and Loeser \cite{DL92}, provide a resolution-theoretic invariant of singularities which lies at the intersection of birational geometry, singularity theory, and the theory of Bernstein--Sato polynomials.  Explicit computations are known only for rather special classes of
singularities. For hypersurfaces which are nondegenerate with respect to their Newton polyhedron, the topological zeta function admits a Newton-polyhedral description; see, for instance, \cite{DL92} and the references therein. Related Newton-polyhedral formulas and descriptions of candidate poles for ideals
and analytic mappings were developed by Veys and Z\'u\~niga-Galindo \cite{VZ08}. Hyperplane arrangements form another important class in which the resolution geometry is sufficiently combinatorial to permit explicit calculations and to study the monodromy conjecture; see, for example, Budur--Musta\c{t}\u{a}--Teitler
\cite{BMT11}.

A particularly relevant result is provided by generic determinantal varieties. Let $M=\mathbb A^{mn}$ be the space of $m\times n$ matrices and let $Z_r\subset M$ be defined by the $r\times r$ minors of the generic matrix. Docampo studied the arc spaces and jet schemes of these varieties using the natural $\mathrm{GL_m}\times\mathrm{GL}_n$-action and, through motivic integration, obtained explicit formulas for their topological zeta functions \cite[Theorem 6.5]{Doc13}. In the square case $m=n$, one has $$ Z^{\mathrm{top}}_{Z_r}(s) = \prod_{j=1}^{r} \frac{1}{ 1+s\,\dfrac{r+1-j}{(m+1-j)^2}}. $$
Recently, Chen-Zuo have pointed out that the same argument can be extended to the case $m\neq n$ in \cite[Theorem 5.4]{CH25}, giving
$$ Z^{\mathrm{top}}_{Z_r}(s) = \prod_{j=1}^{r} \frac{1}{ 1+s\,\dfrac{r+1-j}{(m+1-j)(n+1-j)}}.
$$  Finally,  Chen-Shi-Xu-Zuo provide a closed formula in the skew-symmetric generic determinantal case (cf. \cite[\S3.8]{CSXZ25}).

The aim of the present paper is to obtain an analogous formula for the two remaining  important classes: the class of \textit{generic symmetric determinantal varieties} and that of \textit{catalecticant determinantal varieties}, which turn out to be more involved.

Although the rank stratification has a similar recursive character, the geometry of the resolutions is different. A key ingredient in the symmetric case is a simultaneous local reduction of the determinantal ideals, based on the Schur complement, which allows us to reduce the problem locally to that of a smaller generic symmetric matrix (cf. Lemma \ref{lem:simultaneous-local-reduction}). Successive blow-ups of the symmetric rank loci lead naturally to the geometry of complete quadrics, and the strata over the origin are governed by flags of kernels together with nondegenerate quadratic forms on their successive quotients. A crucial feature of the symmetric case is the vanishing of the Euler characteristic of the space of nondegenerate projective quadratic forms in dimension at least three (cf. Lemma \ref{lem:Qchi}). This produces a strong cancellation among the resolution strata and ultimately reduces the computation to a combinatorics of paths with steps of length one and two. For catalecticant determinantal varieties, we consider generic Hankel matrices, whose projectivized rank loci are precisely the secant varieties of a rational normal curve. The successive blow-ups of these rank loci are closely related to the classical resolution of secant varieties constructed by Bertram (cf. \cite{Ber92}). In this setting, a reciprocal-series transformation provides a simultaneous local reduction of all determinantal ideals to those of a smaller generic Hankel matrix (cf. Lemma \ref{lem:cat-local-reduction}). This allows us to establish the corresponding log-principalization and compute its numerical data explicitly. Moreover, the same local reduction gives a recursive description of the boundary strata over the origin. The Euler-characteristic computation is particularly simple, i.e. we show that every relevant projective rank stratum has Euler characteristic two (see Lemma \ref{lem:cat-rank-euler}). Unlike in the symmetric case, there is no analogous vanishing of the Euler characteristics of the boundary strata, and every stratum indexed by a subset containing the first boundary divisor contributes to the topological zeta function. As a consequence, the resulting sum admits a direct factorization, yielding a closed product formula for the local topological zeta function. Let us now state the main results.

We deal with the symmetric case first. Let $V$ be a complex vector space of dimension $n$ and let $S_0:=\Sym^2 V^\vee\simeq \mathbf A^N$, and  $N=\binom{n+1}{2}.$
After choosing a basis of $V$, let $ X_n=(x_{ij})_{1\le i,j\le n},$ $ x_{ij}=x_{ji},$ be the generic symmetric $n\times n$ matrix. For $1\le k\le n$, set $$
S_k=\{A\in S_0:\rank A\le n-k\}.$$
Put $ r:=n-k+1.$  Then $S_k$ is defined by the ideal $I_k:=I_r(X)$ generated by the $r\times r$ minors of $X$. In what follows, it is convenient to use the rank indexing
$$
Y_j:=\{A\in S_0:\rank A\le j-1\}=V(I_j(X)),
\qquad 1\le j\le n.
$$
Thus, $Y_1=\{0\}$ and $S_k=Y_r.$ For $1\le j\le r$, define
$$ D_j(s):=(r-j+1)s+\binom{n-j+2}{2}$$
Define recursively
\begin{equation}\label{eq:Rinit}
R_0(s)=0,\qquad R_1(s)=1,
\end{equation}
and, for $j\ge 2$,
\begin{equation}\label{eq:Rrec}
R_j(s)=R_{j-1}(s)+\frac{D_{j-1}(s)}{2}R_{j-2}(s).
\end{equation}

\begin{theorem}\label{thm:main}
With the notation above, the local topological zeta function of the ideal $I_k$ at the origin is
\begin{equation}\label{eq:main}
Z^{\mathrm{top}}_{(S_0,S_k),0}(s) = \binom{k+1}{2}\frac{n!}{k!} \frac{R_r(s)+\dfrac{D_r(s)}{k+1}R_{r-1}(s)}{\displaystyle\prod_{j=1}^{r}D_j(s).}
\end{equation}
When $r=1$, the term involving $R_{r-1}=R_0$ is zero.

Equivalently, let $$\mathcal P_b = \left\{ (1=j_0<j_1<\cdots<j_t=b): j_\ell-j_{\ell-1}\in\{1,2\} \right\},$$ and for $\mathbf j=(j_0,\dots,j_t)\in\mathcal P_b$ let
$$q(\mathbf j) = \#\{\ell:j_\ell-j_{\ell-1}=2\}.$$
Then
\begin{align}\label{eq:pathformula}
Z^{\mathrm{top}}_{(S_0,S_k),0}(s)={}& \binom{k+1}{2}\frac{n!}{k!} \sum_{\mathbf j\in\mathcal P_r} \frac{2^{-q(\mathbf j)}}{\prod_{\ell=0}^{t}D_{j_\ell}(s)} \nonumber\\
&+ \binom{k+1}{2}\frac{n!}{(k+1)!} \sum_{\mathbf j\in\mathcal P_{r-1}} \frac{2^{-q(\mathbf j)}}{\prod_{\ell=0}^{t}D_{j_\ell}(s)},
\end{align}
where the second sum is omitted when $r=1$.
\end{theorem}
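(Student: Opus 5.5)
We will compute $Z^{\mathrm{top}}_{(S_0,S_k),0}$ with the Denef--Loeser formula, using the log-principalization $\pi\colon \widetilde S\to S_0$ obtained by successively blowing up the (strict transforms of the) rank loci $Y_1=\{0\},Y_2,\dots,Y_{r-1}$, and then $Y_r=S_k$ itself (the variety of complete quadrics truncated at level $r$). The exceptional divisors $E_1,\dots,E_r$ have numerical data $(N_j,\nu_j)$. By the simultaneous local reduction of Lemma~\ref{lem:simultaneous-local-reduction} (Schur complement), near a general point of $Y_{j+1}\setminus Y_j$ the pair $(S_0,\{I_i\})$ is locally a product of a smooth factor with the generic symmetric $(n-j+1)\times(n-j+1)$ situation. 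Hence $\codim Y_j=\binom{n-j+2}{2}$, giving $\nu_j=\binom{n-j+2}{2}$. The order of $I_r$ along $Y_j$ is $r-j+1$, giving $N_j=r-j+1$. Thus $N_js+\nu_j=D_j(s)$. We will check that the total transform of $I_k$ is $\mathcal O(-\sum N_jE_j)$ and that the $E_j$ are snc. Both follow by induction on $n$ via the same local reduction, as for complete quadrics.

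Next, the denominators $D_J:=\prod_{j\in J}D_j$ are indexed by subsets $J\subseteq\{1,\dots,r\}$, and the local zeta function is
\[
Z=\sum_{J\ni 1}\chi\bigl(E_J^\circ\cap\pi^{-1}(0)\bigr)\prod_{j\in J}D_j(s)^{-1}.
\]
Only $J\ni 1$ contributes, since $\pi^{-1}(0)\subset E_1$. We describe $E_J^\circ\cap\pi^{-1}(0)$ for $J=\{1=j_0<\dots<j_t\}$. Write $j_{t+1}:=r$ if $j_t<r$, and treat the top stratum of $S_k$ as the non-divisorial piece. A point of this stratum is a partial flag $0\subset W_1\subset\dots\subset V$ with successive quotients of dimensions $j_{\ell}-j_{\ell-1}$, together with a nondegenerate projective quadratic form on each quotient. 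The last piece is a nondegenerate quadric of corank $n-r+1=k$ on the remaining part, or a rank-$\le r-1$ quadric on the final block, depending on whether $r\in J$. The Euler characteristic is multiplicative along these fibrations. We then invoke Lemma~\ref{lem:Qchi}: the space of nondegenerate projective quadratic forms on $\C^d$ has $\chi=1$ for $d=1$, $\chi=1$ for $d=2$ (it is $\PP^2$ minus a conic, so $3-2$), and $\chi=0$ for $d\ge3$. Consequently only those $J$ whose consecutive gaps lie in $\{1,2\}$ survive.

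For the counting, the Euler characteristic of a flag variety is a multinomial, i.e.\ a count of torus-fixed flags. A step of length 2 contributes a factor $\binom{m}{2}$ instead of $m(m-1)$, which produces exactly the $2^{-q(\mathbf j)}$ against the $n!/k!$ normalization. The final block produces the overall factor $\binom{k+1}{2}$. It splits into two cases: $r\in J$, where the last gap is length 1 or 2 into $r$, giving $\mathbf j\in\mathcal P_r$; and $r\notin J$, where the path ends at $r-1$ and the last block has size $k+1$ carrying a quadric of rank exactly $1$ up to scalars, giving the factor $1/(k+1)$ relative to the first. Summing yields \eqref{eq:pathformula}. The closed form \eqref{eq:main} then follows by clearing denominators: $R_b\cdot\prod_{j\le b}D_j^{-1}$ equals $\sum_{\mathbf j\in\mathcal P_b}2^{-q}/\prod D_{j_\ell}$. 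To see this, split on whether the last step has length 1 or 2; this gives exactly the recursion \eqref{eq:Rrec}, since a length-2 step skips $D_{b-1}$ with weight $1/2$.

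The main obstacle is the precise Euler characteristic bookkeeping in the last step, namely getting the constants $\binom{k+1}{2}n!/k!$ and the $1/(k+1)$ exactly right. For this we would first verify the formula against small cases ($n=1,2$ and $r=1$, where $Z=1/D_1$ must come out) and then fix the normalization by a torus-fixed-point count on the complete-quadrics strata. Establishing snc and the multiplicities $N_j$ is routine given the local reduction lemma.
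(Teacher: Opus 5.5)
Your outline follows the paper's route: the Gaube--Schober tower with a final blow-up of $Y_r$, $N_js+\nu_j=D_j(s)$, $\pi^{-1}(0)=E_1$, a flag-times-$Q_d$ fibration, Lemma~\ref{lem:Qchi}, and the recursion for $R_b$. However, the step that actually produces the two sums in \eqref{eq:pathformula} is missing, and what you put in its place is incorrect.

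\textbf{The residual factor and the vanishing for $\max J\le r-2$.} For $J=\{1=j_0<\cdots<j_t=b\}$, Proposition~\ref{prop:strata} shows that $E_J^\circ\cap F$ fibres over the kernel flag variety $\cF_J$. Its fibre is $Q_{d_1}\times\cdots\times Q_{d_t}\times U_{m,r-b+1}$ with $m=n-b+1$. The last factor consists of \emph{all} projective forms on the last kernel $K_t$ of rank $\ge r-b+1$. This rank condition is exactly the condition of avoiding $E_{b+1},\dots,E_r$: by the residual compatibility of Lemma~\ref{lem:simultaneous-local-reduction}, these divisors are the residual loci $\{\rank\le1\},\dots,\{\rank\le r-b\}$. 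Stratifying by exact rank, $\chi(\mathcal R_{m,q})=\binom{m}{q}\chi(Q_q)$ gives $\chi(U_{m,a})=\binom{m+1}{2},\binom{m}{2},0$ for $a=1,2,\ge3$ (Lemma~\ref{lem:rankchi}). The vanishing for $a\ge3$ is what kills every $J$ with $\max J\le r-2$. You never prove this; you simply assert that for $r\notin J$ ``the path ends at $r-1$''. Your convention $j_{t+1}:=r$ with all gaps in $\{1,2\}$ would even admit $\max J=r-2$.

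\textbf{The last factor is misidentified, so the constants are not derived.}
\begin{itemize}
\item For $\max J=r$, the last factor is the whole $\PP(\Sym^2\C^{k\vee})$, the exceptional fibre of the last blow-up, with $\chi=\binom{k+1}{2}$. It is not a ``nondegenerate quadric of corank $k$''.
\item For $\max J=r-1$, it is the set of forms of rank $\ge2$ on a $(k+1)$-dimensional space. Its $\chi=\binom{k+1}{2}$ comes entirely from rank exactly $2$. Rank-one residual forms are precisely the points lying on $E_r$, so they are excluded.
\end{itemize}
If the factor were ``a quadric of rank exactly $1$ up to scalars'', i.e.\ $\PP^k$, you would get $\frac{n!}{(k+1)!\,2^{q(\mathbf j)}}\,(k+1)$. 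That is $1/\binom{k+1}{2}$ times the $\max J=r$ coefficient, not $1/(k+1)$.

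A concrete check: for $n=2$, $k=1$ (the cone $x_{11}x_{22}-x_{12}^2$), $E_{\{1\}}^\circ\cap F=\PP^2\setminus\{\text{conic}\}$ has $\chi=1$ and gives the term $1/D_1(s)$. The conic of rank-one forms has $\chi=2$ instead.

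So your factor $1/(k+1)$ is read off from the statement rather than derived. Checking small cases cannot substitute for computing $\chi(U_{m,r-b+1})$.

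A minor slip: the local model near a point of $Y_{j+1}\setminus Y_j$ is the $(n-j)\times(n-j)$ problem, not $(n-j+1)\times(n-j+1)$. For $\nu_j$ you should use a point of $Y_j\setminus Y_{j-1}$.
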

\begin{corollary}\label{cor:main} The local topological zeta function of the pair $(S_0,S_1)$ at the origin is given as follows:
\begin{itemize}
\item[1.] If $n=2m+1$ is odd, 
$$ Z^{\mathrm{top}}_{(S_0,S_1),0}(s) = \frac{1\cdot 3\cdot 5\cdots n} {(s+1)\displaystyle\prod_{j=1}^{m}(2s+2j+1)}.
$$

\item[2] If $n=2m$ is even, 
$$
Z^{\mathrm{top}}_{(S_0,S_1),0}(s) = \frac{(n-1)!!\,(s+n+1)} {(s+1)\displaystyle\prod_{j=1}^{n/2}(2s+2j+1)}.
$$
\end{itemize}
Consequently, the Strong Monodromy Conjecture holds for the pair $(S_0,S_1)$.
\end{corollary}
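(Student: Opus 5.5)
We specialize Theorem~\ref{thm:main} to $k=1$, so that $r=n$, and then evaluate the numerator in closed form. For $k=1$ we have
\[
D_j(s)=ns-(j-1)s+\binom{n-j+2}{2}=(n-j+1)\Bigl(s+\tfrac{n-j+2}{2}\Bigr).
\]
Hence, writing $a=2s$,
\[
\prod_{j=1}^nD_j(s)=n!\,2^{-n}\prod_{i=2}^{n+1}(a+i).
\]
The prefactor is $\binom{2}{2}\frac{n!}{1!}=n!$, and $\frac{D_r}{k+1}R_{r-1}=\frac{D_n}{2}R_{n-1}$. So the numerator is exactly $R_n+\frac{D_n}{2}R_{n-1}$, which is the value $R_{n+1}$ that the recurrence \eqref{eq:Rrec} produces when extended one step further with $D_n$. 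Formula \eqref{eq:main} therefore becomes
\[
Z^{\mathrm{top}}_{(S_0,S_1),0}(s)=\frac{2^nR_{n+1}}{\prod_{i=2}^{n+1}(a+i)}.
\]
Comparing with the claimed formulas, it suffices to prove the following two identities:
\[
R_{n+1}=2^{-n}\,n!!\prod_{\substack{4\le i\le n+1\\ i\ \mathrm{even}}}(a+i)\quad\text{for } n \text{ odd},
\]
\[
R_{n+1}=2^{-n}(n-1)!!\,(a+2n+2)\prod_{\substack{4\le i\le n\\ i\ \mathrm{even}}}(a+i)\quad\text{for } n \text{ even}.
\]
In the even case the factor $(a+n+2)$ coming from $(s+n+1)$ should be read as $\tfrac12(a+2n+2)$, with the powers of $2$ adjusted accordingly. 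Before attempting a proof, I would check the normalization for $n=1,2,3$ by hand; for instance, $n=1$ gives $R_2=1$ and $Z=1/(s+1)$.

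The main step, and the expected obstacle, is a closed form for all the $R_j$ with $n$ fixed. The recurrence has coefficients
\[
c_j:=\frac{D_{j-1}}{2}=\frac{(n-j+2)(a+n-j+3)}{4},
\]
and $R_j$ is a continuant in these $c_j$, which a priori does not factor. My plan is to compute $R_2,R_3,R_4$ explicitly, guess a two-parity ansatz, and prove it by induction on $j$ through the recurrence. The ansatz would be: $R_{2p}$ and $R_{2p+1}$ each equal a product of $p$ linear factors of the form $(a+n-2t+\text{const})$, times at most one extra non-product factor, with rational constants. Since the formula need only be verified at the single value $j=n+1$, I would first try the backward formulation, that is, the ratios $\rho_j=R_j/R_{j-1}$. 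I would expect $\rho_j$ to be a simple linear fraction in $a$ that telescopes.

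If guessing the ansatz fails, I would fall back on the path expansion \eqref{eq:pathformula}. It writes $R_{n+1}$ as a sum over domino tilings weighted by $\prod c_j$. The goal would then be a sign-free matching argument that groups tilings by the position of the first domino, which yields a hypergeometric-type identity to be summed by Chu--Vandermonde.

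Finally, for the Strong Monodromy Conjecture, I would read off the poles from the closed forms: $-1$, and $-\frac{2j+1}{2}$ for $1\le j\le\lceil n/2\rceil$. The possible cancellation between the factor $(s+n+1)$ and a denominator factor in the even case is harmless. I would then invoke the classical Bernstein--Sato polynomial of the generic symmetric determinant, $b(s)=\prod_{i=1}^n\bigl(s+\frac{i+1}{2}\bigr)$, whose roots contain every pole listed above.
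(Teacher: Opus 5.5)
Your reduction is correct: for $k=1$ the numerator of \eqref{eq:main} is $R_{n+1}=R_n+\frac{D_n}{2}R_{n-1}$, and $Z=2^nR_{n+1}/\prod_{i=2}^{n+1}(a+i)$. There is one slip: in the odd case the target constant is $2^{1-n}n!!$, not $2^{-n}n!!$, as your own check $R_2=1$ at $n=1$ shows.

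The gap is the closed form for $R_{n+1}$, which is the whole content of the corollary, and the route you propose for it does not work. With $n$ fixed, the forward values $R_j^{(n)}$ for $j\le n$ are continuants in the top weights $c_3,\dots,c_j$, and they have no uniform product form. For $n=4$ one gets $R_3^{(4)}=\frac{3s+8}{2}$ and $R_4^{(4)}=\frac{5s+11}{2}$. For $n=6$ one gets $R_6^{(6)}=\frac14(33s^2+180s+237)$, which is irreducible over $\mathbf{Q}$ (discriminant $1116$). So no two-parity product ansatz in $j$ can be proved by induction on $j$, and the ratios $R_j/R_{j-1}$ are not linear fractions that telescope. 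Only the terminal value $R_{n+1}^{(n)}$ factors. The Chu--Vandermonde fallback is too unspecified to fill this gap.

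The missing idea is to induct on $n$ rather than on $j$. Since $D_j^{(n)}$ depends only on $n-j$, the substitution $i=n-j+1$ turns the weights into $w_i=\frac{i}{2}\bigl(s+\frac{i+1}{2}\bigr)$, which are independent of $n$. By Proposition~\ref{prop:Rexplicit} (your domino expansion), $R_{n+1}^{(n)}$ is then the sum of $\prod_{i\in M}w_i$ over non-consecutive subsets $M\subseteq\{1,\dots,n-1\}$. Conditioning on whether $n-1\in M$ gives
$R_{n+1}^{(n)}=R_n^{(n-1)}+\frac{n-1}{2}\bigl(s+\frac n2\bigr)R_{n-1}^{(n-2)}$,
a recursion in which every term factors. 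The two parity formulas then follow by a simultaneous induction: in the odd step the bracket collapses to $(2m+1)(s+m+1)$, and in the even step to $\frac12(s+2m+3)$. This is the paper's proof. If your ``backward formulation'' meant building the continuant from the $c_{n+1}=w_1$ end, it is exactly this argument, but then you must work with those bottom partial continuants, not with ratios of the forward $R_j$.

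Two small corrections for the monodromy step:
\begin{itemize}
\item The poles are $-1$ and $-\frac{2j+1}{2}$ for $1\le j\le\lfloor n/2\rfloor$, not $\lceil n/2\rceil$. For odd $n$, the value $-\frac{n+2}{2}$ is neither a pole nor a root of $b$.
\item No cancellation occurs in the even case, since $-(n+1)$ is never a denominator root.
\end{itemize}
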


Next, we turn to the catalecticant case. Let $m,n\geq1$, put $d=m+n-2$, and set $S_0:=\A^{d+1}$, with coordinates $x_0,\ldots,x_d$.  Consider the generic catalecticant matrix
$$ H_{m,n}(x):=(x_{i+j})_{\substack{0\leq i\leq m-1\\0\leq j\leq n-1}} =
\begin{pmatrix}
x_0&x_1&\cdots&x_{n-1}\\
x_1&x_2&\cdots&x_n\\
\vdots&\vdots&&\vdots\\
x_{m-1}&x_m&\cdots&x_{m+n-2}
\end{pmatrix}.
$$
For $1\leq j\leq\min\{m,n\}$, let $I_j:=I_j(H_{m,n})$ be the ideal generated by its $j\times j$ minors, and write
$$
Y_j:=V(I_j)=\{x\in S_0:\rank H_{m,n}(x)\leq j-1\}.
$$
Recall that the ideals $I_j$ are prime and that
\begin{equation}
\label{eq:cat-dimension}
\dim Y_j=2j-2,
\qquad
\codim_{S_0}Y_j=d-2j+3=m+n-2j+1;
\end{equation}
by \cite[\S1]{CMSV18}.  For $j\geq2$, $\PP(Y_j)$ is the closure of the union of the $(j-2)$-planes spanned by $j-1$ points of the rational normal
curve in $\PP^d$.  
Denote $$ c:=m+n-2r+1 \text{ and } D_j(s):=(r-j+1)s+m+n-2j+1 \;(1\leq j\leq r).$$
Notice that $c\geq1$ and that $c=1$ if and only if $m=n=r$.

\begin{theorem}
\label{thm:cat-zeta}
With the notation above, the local topological zeta function of $I_r$ at
the origin is
\begin{equation}
\label{eq:cat-zeta-product}
\Ztop_{I_r,0}(s)
=
\frac{c}{D_1(s)}
\prod_{j=2}^{r}\left(1+\frac{2}{D_j(s)}\right).
\end{equation}
Equivalently,
\begin{equation}
\label{eq:cat-zeta-reindexed}
\Ztop_{I_r,0}(s)
=
c\,
\frac{\displaystyle\prod_{q=1}^{r-1}\bigl(q(s+2)+c\bigr)}
     {\displaystyle\prod_{q=1}^{r}\bigl(q(s+2)+c-2\bigr)}.
\end{equation}
Here, $c:=m+n-2r+1$ and empty products are understood to be $1$.
\end{theorem}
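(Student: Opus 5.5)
Our approach is a log-principalization of $I_r$ obtained by blowing up successively (strict transforms of) $Y_1\subset Y_2\subset\cdots\subset Y_{r-1}$. Afterwards the Denef--Loeser formula
\[
\Ztop_{I_r,0}(s)=\sum_{J}\chi\bigl(E_J^\circ\cap \pi^{-1}(0)\bigr)\prod_{j\in J}\frac{1}{N_js+\nu_j}
\]
is applied. Here $E_1,\dots,E_{r-1}$ are the exceptional divisors and $E_r$ is the strict transform of $Y_r$.

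\textbf{Step 1: the resolution is a log-principalization.} We use the simultaneous local reduction of Lemma~\ref{lem:cat-local-reduction}. Near a point of $Y_{j+1}\setminus Y_j$ the reciprocal-series transformation identifies $(S_0,I_1,\dots,I_r)$, up to a smooth factor of dimension $2j$, with the data of a generic Hankel matrix $H_{m-j,n-j}$, where $I_{i}$ corresponds to $I_{i-j}$. Inductively, this shows two things. First, after blowing up $Y_1,\dots,Y_{j-1}$, the strict transform of $Y_j$ is smooth and meets the previous exceptional divisors with normal crossings. Second, the total transform of $I_r$ is $\prod_{j<r}\mathcal I_{E_j}^{\,r-j+1}$ times the ideal of a smooth strict transform $\widetilde Y_r$, which is a divisor when $r=\min(m,n)$ and otherwise is blown up as well. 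Reading off orders along generic points, where $I_r$ locally looks like the $(r-j+1)$-th power of a maximal ideal in codimension $m+n-2j+1$ by \eqref{eq:cat-dimension}, gives $N_j=r-j+1$ and $\nu_j=\codim Y_j=m+n-2j+1$. Hence $N_js+\nu_j=D_j(s)$. For the last divisor $E_r$, $N_r=1$ and $\nu_r=c$, which gives the factor $D_r(s)=s+c$.

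\textbf{Step 2: strata over the origin.} We have $\pi^{-1}(0)\subset E_1$, so only subsets $J\ni 1$ contribute. Using the local reduction again, the fibre of $E_1\to 0$ is the blow-up picture of $\PP(S_0)=\PP^d$ along the secant varieties $\PP(Y_j)$. The stratum $E_J^\circ\cap\pi^{-1}(0)$ fibres recursively, by repeatedly applying the reduction to smaller Hankel matrices, over products of projective rank strata $\PP(Y_{j'+1})\setminus\PP(Y_{j'})$ attached to the consecutive gaps of $J$. By Lemma~\ref{lem:cat-rank-euler} each such stratum has Euler characteristic $2$. The last piece, in which the generic rank of the relevant smaller Hankel matrix is reached, gives the coefficient involving $c$. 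The bookkeeping should give
\[
\chi\bigl(E_J^\circ\cap\pi^{-1}(0)\bigr)=c\cdot 2^{|J|-1}
\]
for $J\ni1$; this is to be checked against the normalization $c$ of $\Ztop$ for $r=1$, which is $c/(s+c)$.

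\textbf{Step 3: summation.} The sum factorizes:
\[
\sum_{1\in J\subset\{1,\dots,r\}}\frac{c\,2^{|J|-1}}{\prod_{j\in J}D_j}=\frac{c}{D_1}\prod_{j=2}^r\Bigl(1+\frac{2}{D_j}\Bigr),
\]
which is \eqref{eq:cat-zeta-product}. Substituting $q=r-j+1$ turns $D_j$ into $q(s+2)+c-2$ and $D_j+2$ into $q(s+2)+c$. Telescoping then yields \eqref{eq:cat-zeta-reindexed}.

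The main obstacle is Step 2: verifying that the Euler characteristics of the open strata over the origin are exactly $c\,2^{|J|-1}$, independently of the gap sizes in $J$. I would first handle it by a fibration argument along the recursive reduction, using multiplicativity of $\chi$ for locally trivial fibrations. If that fails, I would use a $\mathbf C^*$- or torus-action (the torus acting on the rational normal curve) to localize to fixed points, where the count of $2$ per stratum is transparent.
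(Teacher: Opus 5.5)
Your outline is the paper's proof. It uses the same tower of blow-ups of $Y_1,\dots,Y_{r-1}$ (plus $\widetilde Y_r$), the same numerical data $N_js+\nu_j=D_j(s)$, the same count $\chi(E_J^\circ\cap F)=c\,2^{|J|-1}$ for $1\in J$, and the same factorization of the sum.

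\textbf{The gap.} Step~2, which carries all the content, is only asserted (``the bookkeeping should give''). To turn the recursive fibration into an argument you need what Proposition~\ref{prop:cat-residual-strata} supplies.
\begin{itemize}
\item Over a projective point of exact rank $b<r$, Lemma~\ref{lem:cat-local-reduction}(iii) identifies the whole projective tower with $U_0\times\bar X$. Here $\bar X$ is the principalization for the residual triple $(m-b,n-b,r-b)$, $E_{b+k}$ corresponds to $U_0\times\bar E_k$, and $E_2,\dots,E_b$ do not meet this neighbourhood.
\item Since $\bar F=\bar E_1$, every point over $R_b$ lies on $E_{b+1}$ and on none of $E_2,\dots,E_b$.
\item Consequently $\rho$ maps $E^\circ_{\{1\}}\cap F$ isomorphically onto $U_r$.
\item For $|J|\ge 2$, put $a=\min(J\setminus\{1\})-1$. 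Then $E_J^\circ\cap F\to R_a$ is Zariski locally trivial with fibre the residual stratum $\bar E^\circ_{J'}\cap\bar F$, where $J'=\{j-a:j\in J\setminus\{1\}\}\ni 1$. This is an iterated fibration, not a product.
\end{itemize}
What settles your worry about gap sizes is that the residual codimension $(m-a)+(n-a)-2(r-a)+1$ is again $c$. So every residual problem has the same $c$ and the same type of last divisor. Combining this with $\chi(R_a)=2$ for all $a<r$ and $\chi(U_r)=d+1-2(r-1)=c$ (Lemma~\ref{lem:cat-rank-euler}), induction on $r$ gives $2\cdot c\,2^{|J|-2}$. Note that the terminal piece is the locus $U_{r'}$ of points of rank $\ge r'$ for the residual matrix, not its generic-rank locus.

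Similarly, in Step~1 the positive-rank reduction alone cannot carry the induction through points over the origin. You need part (ii) of Lemma~\ref{lem:cat-local-reduction}, namely $I_q\cO=t^qI_{q-1}(H_{m-1,n-1}(z))$ on the blow-up of $0$, exactly as in Proposition~\ref{prop:cat-factorization}.

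\textbf{Smaller errors.}
\begin{itemize}
\item $\widetilde Y_r$ is a divisor iff $c=1$, i.e.\ $m=n=r$, not whenever $r=\min\{m,n\}$. For $r=m<n$ one has $c=n-m+1\ge 2$, and without the final blow-up $I_r\cO_{X_{r-1}}=\cO(-D)\cdot\mathcal I_{\widetilde Y_r}$ is not invertible. Your value $\nu_r=c$ already presupposes the correct case split.
\item $I_r$ is not locally a power of a maximal ideal along $Y_j$. It becomes $I_{r-j+1}$ of the residual Hankel matrix, generated by forms of degree $r-j+1$ in the normal coordinates, and that is what gives $N_j=r-j+1$.
\item Nothing telescopes at the end. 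The reindexed formula is just $(D_j+2)/D_j$ rewritten via $q=r-j+1$.
\end{itemize}
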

\begin{corollary}
\label{cor:cat-determinant}
Let $f_n:=\det H_{n,n}$ on $\A^{2n-1}$.  Then
\begin{equation}
\label{eq:cat-determinant-zeta}
\Ztop_{f_n,0}(s)
=
\frac{\displaystyle\prod_{q=1}^{n-1}\bigl(q(s+2)+1\bigr)}
     {\displaystyle\prod_{q=1}^{n}\bigl(q(s+2)-1\bigr)}.
\end{equation}
Its poles are all simple and are exactly $$-2+\frac1q,\qquad q=1,\ldots,n.$$ Consequently, the Strong Monodromy Conjecture holds
for the pair $(H_{n,n},Y_n)$ at the origin.
\end{corollary}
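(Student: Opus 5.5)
The statement to prove is Corollary~\ref{cor:cat-determinant}. I would deduce it directly from Theorem~\ref{thm:cat-zeta} by specialization, then analyse the poles, and finally invoke the known Bernstein--Sato polynomial of the Hankel determinant.

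\textbf{Step 1: specialization.} Take $m=n=r$ in Theorem~\ref{thm:cat-zeta}. Then $I_r=I_n(H_{n,n})=(f_n)$ is the principal ideal of the determinant, and $c=2n-2n+1=1$. Substituting $c=1$ into \eqref{eq:cat-zeta-reindexed} gives \eqref{eq:cat-determinant-zeta} immediately.

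\textbf{Step 2: poles.} The denominator factors $q(s+2)-1$ vanish exactly at $s=-2+1/q$, for $q=1,\dots,n$. These values are pairwise distinct, so each denominator factor is simple.

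It remains to check that no cancellation with the numerator occurs. A numerator factor $q'(s+2)+1$ vanishes at $s=-2-1/q'$, which is strictly less than $-2$. Every pole $-2+1/q$ lies in $(-2,-1]$. Hence the numerator and denominator share no root, and all $n$ poles are simple and genuinely present.

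\textbf{Step 3: Strong Monodromy Conjecture.} The topological version asserts that every pole of $\Ztop_{f_n,0}$ is a root of the local Bernstein--Sato polynomial $b_{f_n}(s)$ at the origin. I would cite the known formula for the Bernstein--Sato polynomial of the generic Hankel determinant, which I expect to be
\[
b_{f_n}(s)=\prod_{q=1}^{n}\Bigl(s+2-\tfrac1q\Bigr)\cdot(\text{possibly further factors}),
\]
that is, with roots including $-2+1/q$ for $1\le q\le n$. Comparing with Step~2 then gives the inclusion of poles into roots.

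Two points need care here. First, the cited $b$-function should be the local one at $0$. Since $f_n$ is homogeneous and the origin lies in the closure of every stratum, the local and global $b$-functions coincide. Second, one must match the sign and normalization conventions: roots of $b_f$ are negative rationals.

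\textbf{Main obstacle.} The only nontrivial point is Step~3, namely locating and correctly quoting the Bernstein--Sato polynomial of $\det H_{n,n}$ in the literature. The rest is arithmetic on the formula of Theorem~\ref{thm:cat-zeta}.

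As a sanity check I would verify the case $n=1$. Here $f_1=x_0$ and the formula gives $1/(s+1)$, with pole $-1=-2+1$, consistent with $b(s)=s+1$.
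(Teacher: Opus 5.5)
Your proposal is correct and matches the paper's proof essentially step for step: specialize Theorem~\ref{thm:cat-zeta} to $m=n=r$ (so $c=1$), note that the denominator roots $-2+1/q$ are distinct and lie strictly above $-2$ while the numerator roots $-2-1/q$ lie strictly below $-2$, and then invoke the known Bernstein--Sato polynomial of $\det H_{n,n}$. The reference the paper uses for that last input, which you rightly single out as the only nontrivial point, is \cite[Theorem~1.1]{Lin26}.
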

Finally, exactly the same method used in this paper can be applied to the case of generic determinantal varieties and that of skew-symmetric one thereby obtaining a short proof of Chen-Zuo/Docampo's topological zeta function formula and Chen-Shi-Xu-Zuo's one. We briefly record this fact in the last two sections.

During the final preparation of this manuscript, the author became aware of recent independent work by Lyu and Zhang \cite{LZ26}, who computed the local and global topological zeta functions of the generic symmetric determinant hypersurface and established the Strong Monodromy Conjecture in this case. Their formula for the local topological zeta function agrees with the determinant-hypersurface specialization of our general symmetric determinantal formula. The present paper treats arbitrary generic symmetric determinantal ideals and also includes generic catalecticant determinantal ideals associated with Hankel matrices.
\subsection*{Acknowledgement} The author thanks Nero Budur and Huu-Kien Nguyen for discussions. This work was supported by HIT Grant AUGA5610320926. 
\subsection*{AI disclosure} During the preparation of this manuscript, the author used ChatGPT Plus to assist with mathematical exploration, verification of computations, literature review and revisions. The author takes full responsibility for the correctness, originality, and integrity of the results presented in this paper.

\section{Topological zeta functions and embedded resolutions}

We recall the definition of the local topological zeta function associated with an ideal, together with the resolution-theoretic data entering its construction. Let $X$ be a smooth complex algebraic variety of dimension $N$, let $
\mathfrak a\subset \mathcal O_X $
be a nonzero coherent ideal sheaf, and let $x\in X$ be a closed point.
\subsection{Log-principalizations}
A \emph{log-principalization} of $\mathfrak a$ is a proper birational morphism $\pi:Y\longrightarrow X $
such that:
\begin{enumerate}
\item $Y$ is smooth;
\item $\pi$ is an isomorphism over $X\setminus V(\mathfrak{a})$;
\item the pull-back ideal is locally principal, $\mathfrak a\mathcal O_Y = \mathcal O_Y(-F), $ for an effective divisor $F=\sum_{i\in T}N_iE_i,$, $N_i\in\mathbb N_{>0};
$ \item the divisor $ \operatorname{Supp}(F)$ has simple normal crossings.
\end{enumerate}

Here the $E_i$, $i\in T$, denote the irreducible components of the divisor occurring in the principalization. The relative canonical divisor can be written as
$$ K_{Y/X} = K_Y-\pi^*K_X = \sum_{i\in T}(\nu_i-1)E_i. $$ The integers $(N_i,\nu_i) $ are called the \emph{numerical data} of the principalization.
Equivalently, $ N_i=\operatorname{ord}_{E_i}(\mathfrak a), $ while $ \nu_i-1 =\operatorname{ord}_{E_i}(K_{Y/X}). $

For a nonempty subset $
I\subseteq T,
$ set  $ E_I := \bigcap_{i\in I}E_i \text { and } E_I^\circ := E_I \setminus \bigcup_{j\notin I}E_j.$
Thus $E_I^\circ$ is the stratum consisting of points which lie precisely on the divisors $E_i$ with $i\in I$.
\subsection{The local topological zeta function}
The \emph{local topological zeta function} of the ideal $\mathfrak a$ at $x$ is defined by
$$ Z^{\mathrm{top}}_{\mathfrak a,x}(s) = \sum_{\varnothing\neq I\subseteq T} \chi\!\left( E_I^\circ\cap\pi^{-1}(x) \right) \prod_{i\in I} \frac{1}{N_is+\nu_i}
$$
(cf. \cite{DL92}). Here $\chi(-)$ denotes the topological Euler characteristic with compact support. For complex algebraic varieties this agrees with the ordinary Euler characteristic in all situations occurring below.

A fundamental fact is that $Z^{\mathrm{top}}_{\mathfrak a,x}(s)$ is independent of the chosen log-principalization. Hence it is an invariant of the germ of the pair $(X,V(\mathfrak a))$ at $x$.

The numbers $-\frac{\nu_i}{N_i}$ are called the \emph{candidate poles} supplied by the resolution. Some of them may cancel in the final expression, so the actual poles of $Z^{\mathrm{top}}_{\mathfrak a,x}(s)$ form, in general, a proper subset of the candidate poles.

\subsection{Embedded resolutions}

Suppose now that $Z\subset X$ is a reduced closed subvariety with ideal sheaf $\mathcal I_Z.$ An \emph{embedded resolution} of $Z$ in $X$ is a proper birational morphism $\pi:Y\longrightarrow X$
such that:
\begin{enumerate}
\item $Y$ is smooth;
\item $\pi$ is an isomorphism over $X\setminus sing(Z)$;
\item the strict transform $\widetilde Z\subset Y$ is smooth;
\item the union $\widetilde Z\cup \operatorname{Exc}(\pi)$ has normal crossings.
\end{enumerate}

One must distinguish an embedded resolution from a log-principalization of the ideal $\mathcal I_Z.$ An embedded resolution resolves the geometry of the reduced subvariety $Z$, but it need not make the ideal $\mathcal I_Z\mathcal O_Y$ invertible. If the strict transform $\widetilde Z$ has codimension strictly greater than one, then the pulled-back ideal is generally still nonprincipal along $\widetilde Z$. In good situations for example when $\mathcal{I}_Z\mathcal O_Y=\mathcal O_Y(-D)\cdot\mathcal{I}_{\tilde Z}$ where $D$ is an simple normal crossing divisor and the $\tilde Z$ meet $D$ with normal crossings, one can obtain a log-principalization by blowing up the smooth strict transform $\widetilde Z$ once more: $\widehat Y = \operatorname{Bl}_{\widetilde Z}(Y) \longrightarrow Y.$
Since $\widetilde Z$ meets the exceptional divisor with normal crossings, this additional blow-up preserves the normal-crossing structure and principalizes the ideal. If, on the other hand, $Z$ is a hypersurface, then the strict transform $\widetilde Z$ is already a divisor. In that case an embedded resolution is already sufficient for the principal ideal $(f)$, and one simply includes the strict transform among the divisors entering the topological zeta function.
\begin{example}
\label{ex:embedded-not-principalization}
In general, an embedded resolution of a reduced subvariety need not become a
log-principalization of its ideal after blowing up the smooth strict transform. Indeed, let $X=\A^3_{\C},$ and $I=(q,c),$
where $q=xy-z^2,$ and $c=x^3+y^3+z^3,$ and set $Z=V(I).$ Hence $Z$ is the reduced union of six distinct lines through the origin.

Let $\pi:\tilde X=\Bl_0(X)\longrightarrow X$ be the blow-up of $X$ at the origin. The strict transforms of these six lines are disjoint and smooth, and each meets the exceptional divisor $E\simeq\PP^2$ transversely. Thus $\pi$ is an embedded resolution of $Z$. 

As usual, $\widetilde X \subset \A^3\times\PP^2$ is the closed subvariety with coordinates
$(x,y,z)\times[t:u:v]$ defined by $xu=yt,\; xv=zt,\; yv=zu.$ The exceptional divisor is $E=\pi^{-1}(0)=\{(0,0,0)\}\times\PP^2.$ Consider the affine chart $U_t:=\{t\neq0\}.$ Putting $t=1$, the equations of the blow-up give
$y=xu,\; z=xv.$ Thus $U_t\simeq\A^3$ with coordinates $(x,u,v)$, and $\pi(x,u,v)=(x,xu,xv).$
Moreover, $E\cap U_t=(x=0).$

On this chart, $\pi^*q = x(xu)-(xv)^2= x^2(u-v^2),$ and $\pi^*c=x^3+(xu)^3+(xv)^3=x^3(1+u^3+v^3).$ Set $q':=u-v^2,$ and $
c':=1+u^3+v^3.$ Then
$$
I\OO_{\widetilde X}|_{U_t}=(x^2q',x^3c')=x^2(q',xc').
$$
and  the strict transform $\widetilde Z$ is therefore locally defined by $I_{\widetilde Z}=(q',c').$

Choose a point $p\in E$ such that $q'(p)=0,$ and $c'(p)\neq0.$ Then $c'$ is a unit in $\OO_{Y,p}$, and hence locally near $p$, $I\OO_{Y,p}=x^2(q',x).$ Since $x$ and $q'$ are independent local parameters at $p$, the ideal $(q',x)$ has codimension $2$ and is not principal. Thus $I\OO_Y$ is not invertible near $p$. Moreover, by choice, $p\notin\widetilde Z.$  
Hence, blowing up the smooth strict transform $\widetilde Z$ is an isomorphism near $p$ and cannot remove this
nonprincipal locus. 

For completeness, the failure can also be seen above points lying on the strict transform itself. Let $p_0\in \widetilde Z\cap E.$ Then $x(p_0)=q'(p_0)=c'(p_0)=0.$ Since the conic $q'=0$ and the cubic $c'=0$ meet transversely at the
six points of $\widetilde Z\cap E$, and $\widetilde Z$ meets $E=(x=0)$ transversely, the functions $x,\; q',\; c'$
form a regular system of local parameters at $p_0$. Consider now the blow-up $
\beta:
\widehat X
=
\Bl_{\widetilde Z}(\widetilde X)
\longrightarrow
\widetilde X.
$
Locally at $p_0$, this is the blow-up of the ideal $ I_{\widetilde Z}=(q',c').$ On the $q'$-chart, write $
c'=q'w.$ Then $(q',xc')=(q',xq'w)=(q'),$ so the pull-back of $I\OO_{\widetilde X}$ is principal on this chart.
On the $c'$-chart, however, write $q'=c'w.$ Then $(q',xc')=(c'w,xc')=c'(w,x),$ and hence $I\OO_{\widehat X}=x^2c'(w,x).$ At the point above $p_0$ defined by $x=c'=w=0,$ the functions $x,c',w$ form regular local parameters. Therefore the residual ideal $(w,x)$ has codimension $2$ and is not principal. Thus the blow-up along $\widetilde Z$ still does not principalize $I$, even above certain points of the center itself.

\end{example}

\subsection{Blowing up a smooth center} We shall repeatedly use the numerical data associated with the blow-up of a smooth center. Let $ \pi:\widetilde X=\operatorname{Bl}_C(X)\longrightarrow X
$ be the blow-up of a smooth subvariety $C\subset X$ of codimension $c$, and let $E$ be the exceptional divisor. Then $K_{\widetilde X/X}=(c-1)E.$ Hence the corresponding log-discrepancy coefficient is $\nu(E)=c.$

If $\mathfrak a$ is an ideal on $X$ and $m=\operatorname{ord}_C(\mathfrak a),$ then $\mathfrak a\mathcal O_{\widetilde X}=\mathcal O_{\widetilde X}(-mE)\cdot \widetilde{\mathfrak a},$ where $\widetilde{\mathfrak a}$ denotes the weak transform. Therefore, $N(E)=\operatorname{ord}_C(\mathfrak a).$
Thus, whenever an exceptional divisor $E$ is created by blowing up a smooth center $C$ which is not contained in previously created exceptional divisors, its numerical data are simply
$$
N(E)=\operatorname{ord}_C(\mathfrak a),
\qquad
\nu(E)=\operatorname{codim}_X C.
$$
In the applications below, the centers form a nested chain $$Y_1\subset Y_2\subset\cdots\subset Y_r\subset X.$$
We blow up successively the strict transforms of $Y_1,Y_2,\dots,Y_{r-1},$ and, if necessary, finally the smooth strict transform of $Y_r$. The crucial point is that, at the moment when the strict transform of $Y_j$ is blown up, its generic point lies over $Y_j\setminus Y_{j-1}.$ All earlier blow-ups are isomorphisms over this open stratum. Consequently, the generic point of the center does not lie on the previously created exceptional divisors. Therefore its numerical data are computed directly from the original geometry: $N_j=\operatorname{ord}_{Y_j}(\mathfrak a),$ and $\nu_j=\operatorname{codim}_X(Y_j).$
This observation is particularly useful for generic determinantal and symmetric determinantal ideals, because both quantities can be computed explicitly from the rank stratification.

\section{The Gaube--Schober resolution: principalization and numerical data}
Let $V$ be a complex vector space of dimension $n$, and put $S_0:=\Sym^2 V^\vee.$ After choosing a basis of $V$, let $
X_n=(x_{ij})_{1\leq i,j\leq n},\; x_{ij}=x_{ji}, $ be the generic symmetric matrix, and denote by $I_p^{(n)}:=I_p(X_n)
$ the ideal generated by its $p\times p$ minors. We write $$Y_p^{(n)}:=V(I_p^{(n)}) = \{A\in S_0:\rank A\leq p-1\}.
$$
\begin{remark} As the reader might already have noticed, in these notations, we use one more superscript $n$ compared to those in the introduction. The main aim is to emphasize the dimension $n$ of the ambient vector space $V$ which will be convenient for inductions $n$ in the sequel. However, the superscript $n$ is only used in this section.
\end{remark}
\begin{lemma}
\label{lem:simultaneous-local-reduction}
Let $A\in S_0$ have rank $d$.
\begin{enumerate}
\item[(i)]
Assume $d\geq 1$. After a change of basis of $V$, there exists a Zariski neighbourhood $U$ of $A$ and an isomorphism $U\simeq U_0\times \Sym^2\C^{\,n-d,\vee},$ where $U_0$ is smooth, with the following property. If
$B=(b_{\alpha\beta})$ denotes the generic symmetric $(n-d)\times(n-d)$ matrix on the second factor, then, 
$$
I_p^{(n)}\OO_U=
\begin{cases}
\OO_U,&p\leq d,\\[2mm]
I_{p-d}(B)\OO_U,&p>d.
\end{cases}
$$
Consequently,
$$
Y_p^{(n)}\cap U=
\begin{cases}
\varnothing,&p\leq d,\\[2mm]
U_0\times Y_{p-d}^{(n-d)},&p>d.
\end{cases}
$$

\item[(ii)] Let $\sigma:\widetilde S_0:=\Bl_0(S_n)\longrightarrow S_0$ be the blow-up of the origin, and let $E$ be its exceptional divisor. At every point of $E$, after a change of basis of $V$, there exists a Zariski neighbourhood $U\subset \widetilde S_0$ withcoordinates $t, u_2,\ldots,u_n, b_{ij}$, $(2\leq i\leq j\leq n)$
such that $E\cap U=(t=0)$ and, for every $p\geq 1$,
\begin{equation}
\label{eq:local-transform-minors}
I_p^{(n)}\OO_U
=
t^p I_{p-1}(B),
\end{equation}
where $B=(b_{ij})_{2\leq i,j\leq n}$ is a generic symmetric $(n-1)\times(n-1)$ matrix.
In particular, for $p\geq 2$, the strict transform of
$Y_p^{(n)}$ is defined on $U$ by
\begin{equation}
\label{eq:strict-transform-rank-locus}
I_{p-1}(B),
\end{equation}
and hence
\begin{equation}
\label{eq:local-product-rank-locus}
\widetilde Y_p^{(n)}\cap U
\simeq
\A^1\times\A^{n-1}\times Y_{p-1}^{(n-1)}.
\end{equation}
Moreover, these identifications hold simultaneously for all $p$. Thus the strict transforms of the whole chain $Y_2^{(n)}
\subset Y_3^{(n)}\subset\cdots\subset Y_r^{(n)} $ identify locally with the corresponding chain $Y_1^{(n-1)}\subset Y_2^{(n-1)}\subset\cdots\subset Y_{r-1}^{(n-1)}$ for the residual symmetric matrix $B$.
\end{enumerate}
\end{lemma}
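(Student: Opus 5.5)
The plan is to prove both parts with the Schur complement, carried out over a Zariski open set where one block is invertible.

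For (i), I first change basis in $V$, using a congruence $A\mapsto g^TAg$ with $g\in\GL(V)$, so that $A=\diag(J,0)$ with $J$ an invertible $d\times d$ symmetric matrix. Write a general point of $S_0$ in block form
$$X=\begin{pmatrix}P&Q\\ Q^T&R\end{pmatrix}$$
with $P$ of size $d\times d$. On the open set $U=\{\det P\neq 0\}$, which contains $A$, I define the Schur complement $B:=R-Q^TP^{-1}Q$; it is symmetric of size $n-d$. The map $(P,Q,R)\mapsto(P,Q,B)$ is an isomorphism $U\simeq U_0\times\Sym^2\C^{n-d,\vee}$, where $U_0=\{(P,Q):\det P\ne0\}$ is smooth, and its inverse is $R=B+Q^TP^{-1}Q$. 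I then use the factorization $X=L^T\diag(P,B)L$, where $L=\begin{pmatrix}1&P^{-1}Q\\0&1\end{pmatrix}$ is invertible over $\OO_U$. Determinantal ideals are invariant under left and right multiplication by invertible matrices, so $I_p(X)\OO_U=I_p(\diag(P,B))\OO_U$. The $p\times p$ minors of a block-diagonal matrix are sums of products of $a\times a$ minors of $P$ and $b\times b$ minors of $B$ with $a+b=p$. Since $P$ is invertible on $U$, we have $I_a(P)=\OO_U$ for every $a\le d$. Hence $I_p=\sum_{a+b=p,\,a\le d}I_b(B)=I_{\max(p-d,0)}(B)$, with the convention $I_0=\OO$ and using $I_{b}\subseteq I_{b'}$ for $b\ge b'$. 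This gives both displayed formulas of (i) simultaneously for all $p$.

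For (ii), I reduce to (i) in the chart of the blow-up. A point of $E$ is a line $[A]$ with $A\neq0$ of some rank $d\ge1$. After a change of basis, and choosing a nonzero diagonal entry, I may assume $A_{11}\ne0$. This is possible because a nonzero symmetric form is not alternating in characteristic $0$, so some vector $v$ has $A(v,v)\ne0$; take $v$ as the first basis vector. I then use the blow-up chart $x_{11}=t$, $x_{ij}=t\,y_{ij}$ for $(i,j)\ne(1,1)$, with $y_{11}=1$. There, $I_p(X)=t^pI_p(Y)$ because every $p\times p$ minor is homogeneous of degree $p$. The matrix $Y$ has $(1,1)$-entry $1$, so I apply the Schur complement argument from (i) with $d=1$, now on the whole chart. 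Set $u_j=y_{1j}$ and $b_{ij}=y_{ij}-u_iu_j$ for $2\le i\le j\le n$. The variables $(t,u,b)$ are global coordinates on the chart, and $I_p(Y)=I_{p-1}(B)$. This gives \eqref{eq:local-transform-minors} with $E=(t=0)$.

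From \eqref{eq:local-transform-minors} I derive the strict transform statements. For $p\ge2$, $I_{p-1}(B)$ is a prime ideal, the generic symmetric determinantal ideals being prime, and it does not contain $t$. The total transform is $t^pI_{p-1}(B)$, and saturating it with respect to $t$ gives exactly $I_{p-1}(B)$. Thus the strict transform is $V(I_{p-1}(B))\cong\A^1_t\times\A^{n-1}_u\times Y^{(n-1)}_{p-1}$. The same coordinates serve for every $p$, which gives the simultaneity claim for the chain.

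The main obstacle is the global coordinate claim in (ii): one must check that every point of $E$ lies in a chart of this form after a basis change. Since the symmetric-form argument above guarantees such a chart always exists, I expect only bookkeeping here, not a genuine difficulty. The remaining care point is checking that $I_p$ is generated over $\OO_U$ as claimed, including the $p=1$ edge case, where $I_0(B)=\OO$ and so $I_1=t\cdot\OO_U$.
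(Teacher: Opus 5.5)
Your proposal is correct and follows essentially the same route as the paper. For (i), you use the Schur-complement change of coordinates on $\{\det P\neq 0\}$ together with the invariance of $I_p$ under multiplication by invertible matrices; for (ii), you use the $x_{11}$-chart with $B=C-uu^t$, apply the case $d=1$ of (i), and cover $E$ by choosing $v$ with $q(v,v)\neq 0$. You also spell out two steps the paper leaves implicit: the block-minor computation $I_p(\diag(P,B))=I_{\max(p-d,0)}(B)$, and the $t$-saturation argument identifying the strict transform with $V(I_{p-1}(B))$.
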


\begin{proof}
We first prove~(i). Since $A$ is symmetric of rank $d$, after a congruence transformation we may assume that its upper-left
$d\times d$ principal block is invertible. Shrinking around $A$, write the universal symmetric matrix in block form
$$ M= \begin{pmatrix}
P&Q\\
Q^t&R
\end{pmatrix},
$$
where $P$ is a symmetric invertible $d\times d$ matrix. Set $B:=R-Q^tP^{-1}Q.$ The change of variables $(P,Q,R)\longmapsto(P,Q,B)$ is an isomorphism on the open set $\det P\neq 0$, with inverse $R=B+Q^tP^{-1}Q.$
Moreover, a direct computation shows
$$
\begin{pmatrix}
I_d&0\\
-Q^tP^{-1}&I_{n-d}
\end{pmatrix}
M
\begin{pmatrix}
I_d&-P^{-1}Q\\
0&I_{n-d}
\end{pmatrix}
=
\begin{pmatrix}
P&0\\
0&B
\end{pmatrix}.
$$
where the matrices on the left and on the right are both invertible. Consequently, we obtain
$$I_p(M) = I_p
\begin{pmatrix}
P&0\\
0&B
\end{pmatrix}.
$$
Hence, (i) follows.

We now prove~(ii). Consider first the $x_{11}$-chart of the blow-up of the origin. Set $t=x_{11},$ $x_{1i}=tu_i \;(i\geq 2),$ $x_{ij}=tc_{ij} \;(2\leq i\leq j\leq n).$ Then
$$\sigma^*X_n=t
\begin{pmatrix}
1&u^t\\
u&C
\end{pmatrix},
$$
where $u=(u_2,\ldots,u_n)^t,$ and $C=(c_{ij})_{2\leq i,j\leq n}.$ If we denote $B:=C-uu^t.$
Since $C=B+uu^t,$ the entries of $B$, together with $t,u_2,\ldots,u_n$, are again algebraically independent coordinates on this chart. Now, a direct calculation shows that 
$$
\begin{pmatrix}
1&0\\
-u&I_{n-1}
\end{pmatrix}
\begin{pmatrix}
1&u^t\\
u&C
\end{pmatrix}
\begin{pmatrix}
1&-u^t\\
0&I_{n-1}
\end{pmatrix}
=
\begin{pmatrix}
1&0\\
0&B
\end{pmatrix}.
$$
Hence, by part~(i) with $d=1$,
$$
I_p
\begin{pmatrix}
1&u^t\\
u&C
\end{pmatrix}
=I_{p-1}(B).
$$
so that $I_p^{(n)}\OO_U=t^pI_{p-1}(B),$ 
which proves~\eqref{eq:local-transform-minors}. For $p\geq 2$,  since $I_{p-1}(B)$ involves only the variables $b_{ij}$, \eqref{eq:strict-transform-rank-locus} and then \eqref{eq:local-product-rank-locus} also follow.

It remains only to see that charts of this form are sufficient locally at every point of the exceptional divisor. A point of $E$ is represented by a nonzero projective symmetric form $[q]\in\mathbb{P}(\Sym^2V^\vee).$ There exists $v\in V$ such that
$q(v,v)\neq 0$. After a change of basis sending the first basis vector to $v$, the point lies in an $x_{11}$-chart. The congruence action of $\GL(V)$ preserves every symmetric ranklocus $Y_p^{(n)}$ and therefore lifts to all the blow-ups considered below. Hence the preceding calculation applies locally at every point of $E$.

Finally, since
\eqref{eq:local-transform-minors}--\eqref{eq:local-product-rank-locus}
hold for every $p$ simultaneously, they identify the whole chain of
later rank strata with the corresponding chain for the residual
matrix $B$.
\end{proof}
\begin{remark}
\label{rem:simultaneous-local-reduction-other-types}
The simultaneous local-reduction principle of
Lemma~\ref{lem:simultaneous-local-reduction} remains valid, with the
evident modifications, for the generic determinantal and generic
skew-symmetric determinantal varieties considered in the last sections. For later use, we record this fact in the folowing tables.

\noindent
\textit{Local reduction at a point of positive rank.}
$$
\begin{array}{c|c|c}
\text{matrix type} & \text{rank of the point} & \text{residual problem} \\ \hline
\text{symmetric }n\times n & d & (n,p)\longmapsto(n-d,p-d) \\[1mm]
\text{generic }m\times n & d & (m,n,p)\longmapsto(m-d,n-d,p-d) \\[1mm]
\text{skew-symmetric }n\times n & 2d & (n,p)\longmapsto(n-2d,p-d).
\end{array}
$$
\noindent
\textit{Reduction on a chart of the blow-up of the zero matrix.}
$$
\begin{array}{c|c}
\text{matrix type} & \text{residual problem after blowing up the origin} \\ \hline
\text{symmetric} & (n,p)\longmapsto(n-1,p-1) \\[1mm] \text{generic }m\times n & (m,n,p)\longmapsto(m-1,n-1,p-1) \\[1mm]
\text{skew-symmetric} & (n,p)\longmapsto(n-2,p-1).
\end{array}
$$
\end{remark}
Gaube--Schober in \cite[Theorem B]{GS24} prove that the generic symmetric determinantal variety $Y_r$ admits an embedded resolution obtained by successively blowing up the strict transforms of $Y_1,\ Y_2,\ \ldots,\ Y_{r-1}.$
More precisely, let $X_0:=S_0$ and, for $1\le j\le r-1$, let $\pi_j:X_j\longrightarrow X_{j-1}$ be the blow-up of the strict transform $Y_j^{(j-1)}$ of $Y_j$ in $X_{j-1}$. Then $Y_r^{(r-1)}$ is smooth and meets the exceptional divisor with simple normal crossings. Let $ \pi^{(r-1)} := \pi_1\circ\cdots\circ\pi_{r-1} : X_{r-1}\longrightarrow S_n$ be the resulting morphism, and let $E_j\subset X_{r-1}$ denote the final strict transform of the exceptional divisor created by
$\pi_j$.

\begin{proposition}
\label{prop:factorizing-GS}
The total transform of $I_r^{(n)}$ on $X_{r-1}$ admits the
factorization
\begin{equation}
\label{eq:factorizing-GS}
I_r^{(n)}\OO_{X_{r-1}} = \OO_{X_{r-1}} \left( -\sum_{j=1}^{r-1}(r-j+1)E_j \right) \cdot I_{Y_r^{(r-1)}}.
\end{equation}
Here $I_{Y_r^{(r-1)}}$ denotes the ideal sheaf of the strict transform $Y_r^{(r-1)}\subset X_{r-1}$.
\end{proposition}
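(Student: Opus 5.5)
We argue by induction on $r$ (uniformly in $n$), using the simultaneous local reduction of Lemma~\ref{lem:simultaneous-local-reduction}. The statement is local on $X_{r-1}$, so it suffices to check \eqref{eq:factorizing-GS} near each point $y\in X_{r-1}$. Let $x=\pi^{(r-1)}(y)\in S_0$ and let $d=\rank x$.

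\emph{Case $d\ge 1$.} By Lemma~\ref{lem:simultaneous-local-reduction}(i), a neighbourhood $U$ of $x$ is isomorphic to $U_0\times \Sym^2\C^{n-d,\vee}$. On $U$ we have $Y_p^{(n)}\cap U=U_0\times Y_{p-d}^{(n-d)}$ for $p>d$, and $Y_p^{(n)}\cap U=\varnothing$ for $p\le d$. Blow-ups of smooth centres commute with the smooth factor $U_0$. Hence over $U$ the sequence $\pi_1,\dots,\pi_{r-1}$ is $U_0\times$ the Gaube--Schober sequence for $(n-d,r-d)$, and the steps $j\le d$ are isomorphisms. The divisor $E_j$ over $U$ is the divisor $E'_{j-d}$ of the smaller problem. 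Its coefficient $(r-j+1)=((r-d)-(j-d)+1)$ matches the one predicted by induction. The case $r-d=1$ is trivial.

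\emph{Case $d=0$.} Here $y$ lies over the origin, so $y$ lies on $E_1$. The first blow-up is $\pi_1=\sigma$, and we use Lemma~\ref{lem:simultaneous-local-reduction}(ii). On a chart $U$ we have $I_p^{(n)}\OO_U=t^pI_{p-1}(B)$ for all $p$ simultaneously. The strict transforms of $Y_2,\dots,Y_{r-1}$ are $\A^1\times\A^{n-1}\times Y^{(n-1)}_{p-1}$. So the remaining blow-ups $\pi_2,\dots,\pi_{r-1}$ over $U$ are the product of $\A^n$ with the Gaube--Schober sequence for $(n-1,r-1)$. The divisor $(t=0)$ is not contained in any later centre: each centre is a product containing the $\A^1_t$ factor, so it meets $E_1$ only properly. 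Thus the pullback of $t$ is just the strict transform $E_1$, with no new exceptional contributions.

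By induction, on the $(n-1)$-problem we have
\[
I_{r-1}(B)\,\OO=\OO\Bigl(-\sum_{j'=1}^{r-2}(r-j')E'_{j'}\Bigr)\cdot I_{\widetilde Y'}.
\]
Here $E'_{j'}$ corresponds to $E_{j'+1}$, and $r-j'=r-(j'+1)+1$. Multiplying by $t^r$ gives the coefficient $r=r-1+1$ on $E_1$. This yields exactly \eqref{eq:factorizing-GS}, with $I_{\widetilde Y'}$ identified with $I_{Y_r^{(r-1)}}$.

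The base case $r=1$ is trivial: $I_1$ is the maximal ideal of the origin, there are no blow-ups, and the strict transform is $Y_1$ itself. For $r=2$, Lemma~\ref{lem:simultaneous-local-reduction}(ii) directly gives $t^2I_1(B)$.

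\emph{Main obstacle.} The main obstacle is justifying rigorously that the later blow-ups commute with the local product decompositions. In particular, we must show that the strict transform of $Y_j$ in $X_{j-1}$ restricted over $U$ coincides with the product of the strict transform in the residual problem. We must also show that the strict transform of $E_1$, respectively of $U_0$-pulled-back divisors, acquires no extra multiplicity. We would argue that strict transforms and blow-ups are compatible with flat (here smooth) base change and with open restriction. The chains correspond term by term by the "simultaneously for all $p$" clause of Lemma~\ref{lem:simultaneous-local-reduction}. Finally, $(t=0)$ pulls back to its strict transform, since no centre is contained in it. Equivariance under $\GL(V)$, noted in the proof of that lemma, ensures the charts cover every point.
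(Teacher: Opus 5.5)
Your proposal is correct and follows essentially the same route as the paper: induction on $r$ uniformly in $n$, with the same two cases (positive rank $d$ via the Schur-complement product decomposition reducing to $(n-d,r-d)$, and rank zero via the $x_{11}$-chart of $\Bl_0(S_0)$ giving $t^rI_{r-1}(B)$ and reducing to $(n-1,r-1)$), and the same index shifts $j\mapsto j-d$ and $j'\mapsto j'+1$ on the exceptional divisors. Your explicit check that no later centre is contained in $(t=0)$, so that $t$ pulls back to exactly $E_1$ with no extra exceptional components, is left implicit in the paper's symmetric proof (it is stated only in the catalecticant analogue), and it is the right justification.
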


\begin{proof}
We argue by induction on $r$. For $r=1$, there is no blow-up before the strict transform and $I_1^{(n)}=I_{Y_1^{(n)}},
$ so there is nothing to prove.

Assume $r\geq 2$, and suppose that the assertion has been proved for smaller values of the determinantal index. Since the desired identity of coherent ideals is local on $X_{r-1}$, fix a point $\xi\in X_{r-1}$
and let $ A:=\pi^{(r-1)}(\xi)\in S_n.$ There is nothing to prove if $A\notin Y_r^{(n)}$, because then
$I_r^{(n)}$ is locally the unit ideal. We may therefore assume that $\rank A=d\leq r-1$ and then distinguish two cases.

\medskip

\noindent
\textbf{Case 1: $d\geq 1$.} By Lemma~\ref{lem:simultaneous-local-reduction}(i), after shrinking
around $A$ we have a product decomposition $U\simeq U_0\times\Sym^2\C^{\,n-d,\vee}$
such that $I_r^{(n)}\OO_U= I_{r-d}^{(n-d)}\OO_U.$ Moreover,  $Y_j^{(n)}\cap U=\varnothing$ for $j\leq d,$ whereas $Y_j^{(n)}\cap U=U_0\times Y_{j-d}^{(n-d)},$  for $j>d$. Hence the blow-ups with centers $Y_1^{(n)},\ldots,Y_d^{(n)}$ do not meet $U$, and the restriction to $U$ of the remaining Gaube--Schober tower is the product with $U_0$ of the Gaube--Schober tower $Y_1^{(n-d)},\ldots,Y_{r-d-1}^{(n-d)}$ for the residual symmetric matrix due to the fact that blowing up commutes with this product decomposition.  We may therefore apply the induction hypothesis to the residual
ideal $I_{r-d}^{(n-d)}$. It gives
$$
I_{r-d}^{(n-d)}\OO= \OO \left( -\sum_{\ell=1}^{r-d-1}(r-d-\ell+1)\overline E_\ell \right)I_{\overline Y_{r-d}},
$$
where $\overline E_\ell$ is the exceptional divisor corresponding
to the residual center $Y_\ell^{(n-d)}$. Under the identification $\ell=j-d,$ the divisor $\overline E_\ell$ is precisely $E_j$, and $r-d-\ell+1 =r-j+1.$ Thus~\eqref{eq:factorizing-GS} follows in a neighbourhood of $\xi$.

\medskip

\noindent
\textbf{Case 2: $A=0$.} Let $\xi_1\in X_1$ be the image of $\xi$ under the morphism $X_{r-1}\longrightarrow X_1.$
By Lemma~\ref{lem:simultaneous-local-reduction}(ii), after a constant change of basis and shrinking around $\xi_1$, there are coordinates in which
\begin{equation}
\label{eq:first-exceptional-factor}
I_r^{(n)}\OO_{X_1}
=
t^r I_{r-1}^{(n-1)}(B),
\end{equation}
where $E_1=(t=0).$ Moreover, the strict transforms of $Y_2^{(n)},\ldots,Y_{r-1}^{(n)}$ are on this neighbourhood exactly the products of a smooth factor with $Y_1^{(n-1)},\ldots,Y_{r-2}^{(n-1)}.$ Hence the remaining blow-ups are the product of a smooth factor with the Gaube--Schober tower for the residual pair $(n-1,r-1)$. Applying the induction hypothesis to $I_{r-1}^{(n-1)}(B)$ gives
$$
I_{r-1}^{(n-1)}(B)\OO=\OO\left(-\sum_{\ell=1}^{r-2}((r-1)-\ell+1)\overline E_\ell\right)I_{\overline Y_{r-1}}.
$$
The residual divisor $\overline E_\ell$ corresponds to the original divisor $E_{\ell+1},$ and $(r-1)-\ell+1=r-(\ell+1)+1.$ Together with the factor $t^r$ in \eqref{eq:first-exceptional-factor}, which contributes $rE_1$, this gives
$$ I_r^{(n)}\OO= \OO \left( -rE_1 -\sum_{j=2}^{r-1}(r-j+1)E_j \right) I_{Y_r^{(r-1)}}
$$
which is is exactly~\eqref{eq:factorizing-GS}.
\end{proof}

For the topological zeta function of an ideal, one in fact needs a log-principalization. Thus there is one final step.
Recall that
\[
\codim_{S_n}Y_r^{(n)}
=
\binom{n-r+2}{2}.
\]

\begin{corollary}
\label{cor:GS-principalization}
Let $\pi^{(r-1)}:X_{r-1}\longrightarrow S_n$ be the Gaube--Schober embedded resolution above. 
\begin{itemize}
\item If $\codim_{S_n}Y_r^{(n)}>1,$ let $\pi_r: X_r = \Bl_{Y_r^{(r-1)}}(X_{r-1}) \longrightarrow X_{r-1}$ be the blow-up of the smooth strict transform $Y_r^{(r-1)}$, and let $E_r$ denote its exceptional divisor. 
\item If $\codim_{S_n}Y_r^{(n)}=1,$ or equivalently $r=n$, put $X_r:=X_{r-1},$ and $ E_r:=Y_r^{(r-1)}.$ 
\end{itemize}
Then the composite $ \pi:X_r\longrightarrow S_n$ is a log-principalization of $I_r^{(n)}$, and
\begin{equation}
\label{eq:total-transform-principalization}
I_r^{(n)}\OO_{X_r} = \OO_{X_r} \left(-\sum_{j=1}^{r}(r-j+1)E_j \right).
\end{equation}
\end{corollary}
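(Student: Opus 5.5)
We deduce the corollary from Proposition~\ref{prop:factorizing-GS} by treating the two cases separately. In both cases we must check four things: smoothness, being an isomorphism off $V(I_r^{(n)})$, principality of the total transform with the stated multiplicities, and the simple normal crossing property of its support.

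\emph{Case $r=n$.} Here $I_n^{(n)}=(\det X_n)$ is principal. By the Gaube--Schober theorem, $Y_n^{(n-1)}$ is a smooth divisor meeting $E_1,\dots,E_{n-1}$ with simple normal crossings. Its ideal sheaf is therefore $\OO_{X_{n-1}}(-E_n)$ with $E_n:=Y_n^{(n-1)}$. Substituting this into \eqref{eq:factorizing-GS} gives \eqref{eq:total-transform-principalization} with coefficient $r-n+1=1$ on $E_n$. The map $\pi^{(n-1)}$ is a composite of blow-ups with centres lying over $Y_{n-1}\subset Y_n$, so it is an isomorphism off $V(I_n^{(n)})$, and $X_{n-1}$ is smooth. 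Hence all four conditions hold.

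\emph{Case $\codim>1$.} Let $\pi_r$ be the blow-up of the smooth centre $Y_r^{(r-1)}$, with exceptional divisor $E_r$. Then $I_{Y_r^{(r-1)}}\OO_{X_r}=\OO_{X_r}(-E_r)$. Pulling back \eqref{eq:factorizing-GS} along $\pi_r$ gives
\[
I_r^{(n)}\OO_{X_r}=\OO_{X_r}\Bigl(-\sum_{j=1}^{r-1}(r-j+1)\pi_r^*E_j - E_r\Bigr).
\]
We must show that $\pi_r^*E_j$ equals the strict transform of $E_j$. This holds because the centre $Y_r^{(r-1)}$ is not contained in any $E_j$, so no $E_j$ contains the centre and its pullback acquires no extra multiple of $E_r$. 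The reason the centre is not contained in any $E_j$ is that its generic point lies over $Y_r\setminus Y_{r-1}$, where all earlier blow-ups are isomorphisms.

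One might worry that the pullback of $E_j$ still contains a component of $E_r$ when $Y_r^{(r-1)}\cap E_j\neq\varnothing$. It does not. For a smooth centre $C$ and a divisor $D\not\supset C$, we have $\pi^*D=\widetilde D+\operatorname{ord}_C(D)E$, and $\operatorname{ord}_C(D)=0$ whenever $C\not\subset D$. This yields \eqref{eq:total-transform-principalization} with coefficient $1$ on $E_r$.

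For the normal crossing condition, we use that $Y_r^{(r-1)}$ is smooth and meets $\bigcup_{j<r}E_j$ with simple normal crossings. Locally there are coordinates in which each $E_j$ is a coordinate hyperplane and the centre is a coordinate subspace. Blowing up such a centre preserves simple normal crossings, by a chart computation with monomial coordinates. Smoothness of $X_r$ and being an isomorphism off $V(I_r^{(n)})$ are clear, since the new centre lies in $Y_r$.

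The only delicate point is the simultaneous normal crossings of the centre with all $E_j$, which is needed to have good coordinates for the final blow-up. If the Gaube--Schober statement only gives normal crossings with the union, we can recover this condition locally by induction. We use Lemma~\ref{lem:simultaneous-local-reduction} exactly as in the proof of Proposition~\ref{prop:factorizing-GS}. At a point of rank $d\ge1$ we reduce to the residual tower. At a point over the origin we have $E_1=(t=0)$ and the remaining data depend only on $B$, so $t$ can be adjoined to a residual SNC coordinate system. This makes the local chart structure explicit.
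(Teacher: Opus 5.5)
Your proposal is correct and follows the paper's own argument. You pull back the factorization of Proposition~\ref{prop:factorizing-GS}, use that $Y_r^{(r-1)}$ lies in no $E_j$ because its generic point sits over $Y_r\setminus Y_{r-1}$ (so $\pi_r^*E_j$ is the strict transform), take simple normal crossings from Gaube--Schober's Theorem~B, and settle $r=n$ directly since $Y_n^{(n-1)}$ is already a smooth divisor; your backup sketch via Lemma~\ref{lem:simultaneous-local-reduction} is unnecessary given that theorem, though it mirrors the paper's catalecticant argument (Proposition~\ref{prop:cat-factorization}).
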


\begin{proof}
By Proposition~\ref{prop:factorizing-GS}, one has 
\begin{equation} \label{equation:factorizationtotal}
I_r^{(n)}\OO_{X_{r-1}}=\OO_{X_{r-1}}(-D)\,I_{Y_r^{(r-1)}},
\end{equation}
where $D:=\sum_{j=1}^{r-1}(r-j+1)E_j.$ Suppose first that $ c:=\codim_{X_{r-1}}Y_r^{(r-1)}>1.$
Since $Y_r^{(r-1)}$ is smooth, the defining property of the blow-up gives $I_{Y_r^{(r-1)}}\OO_{X_r}= \OO_{X_r}(-E_r).$
Furthermore, $Y_r^{(r-1)}$ is not contained in any $E_j$. Indeed, its generic point lies over the smooth rank-$(r-1)$ stratum $Y_r^{(n)}\setminus Y_{r-1}^{(n)},$ over which all preceding blow-ups are isomorphisms. Hence the order
of $E_j$ along the center $Y_r^{(r-1)}$ is zero. Writing again $E_j$
for its strict transform on $X_r$, we therefore have
$$\pi_r^*D=\sum_{j=1}^{r-1}(r-j+1)E_j.$$
Pulling (\ref{equation:factorizationtotal}) to $X_r$ yields (\ref{eq:total-transform-principalization}).

By \cite[Theorem B]{GS24}, the strict transform $Y_r^{(r-1)}$ is smooth and has normal crossings with the existing exceptional divisor. Hence $E_1+\cdots+E_r$ is still a simple normal crossings divisor. All centers of the construction lie over $Y_r^{(n)}$, so the composite morphism $\pi:X_r\longrightarrow S_n$ is an isomorphism over $S_n\setminus Y_r^{(n)}.$
Thus $\pi$ is a log-principalization of $I_r^{(n)}$.

Suppose finally that $c=1.$ Then necessarily $r=n$, and $Y_r^{(r-1)}$ is already a smooth
divisor. Thus, the claims follow trivially.
\end{proof}

\begin{lemma}\label{lem:order}
For every $1\le j\le r$, $N_j=r-j+1.$
\end{lemma}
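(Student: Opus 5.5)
The plan is to read the multiplicities off Corollary~\ref{cor:GS-principalization} and then confirm them independently using the description $N_j=\operatorname{ord}_{Y_j}(I_r^{(n)})$ from Section~2. By \eqref{eq:total-transform-principalization}, the pulled-back ideal on $X_r$ is
\[
I_r^{(n)}\OO_{X_r}=\OO_{X_r}\Bigl(-\sum_{j=1}^r (r-j+1)E_j\Bigr).
\]
The $E_j$ are distinct prime divisors, since they are the final strict transforms of distinct exceptional divisors, together with $Y_r^{(r-1)}$ when $r=n$. Hence the coefficient of $E_j$ is $\operatorname{ord}_{E_j}(I_r^{(n)})=N_j$, which gives $N_j=r-j+1$ at once. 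For completeness I would also verify that the $E_j$ are pairwise distinct and irreducible. Irreducibility follows because each center $Y_j^{(j-1)}$ is irreducible: it is the strict transform of the irreducible variety $Y_j$.

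For the direct check I would use the observation in Section~2 that the generic point of the center $Y_j^{(j-1)}$ lies over $Y_j\setminus Y_{j-1}$, where all earlier blow-ups are isomorphisms. This gives $N_j=\operatorname{ord}_{Y_j}(I_r^{(n)})$, the order at a general point $A$ of rank $j-1$. At such a point, Lemma~\ref{lem:simultaneous-local-reduction}(i) with $d=j-1$ gives locally $I_r^{(n)}\OO_U=I_{r-j+1}(B)\OO_U$, where $B$ is a generic symmetric $(n-j+1)\times(n-j+1)$ matrix in the transverse coordinates. In these coordinates $Y_j\cap U=U_0\times\{B=0\}$. The ideal of $(p\times p)$-minors of a generic symmetric matrix, here with $p=r-j+1$, is generated by homogeneous polynomials of degree $p$ in the entries of $B$, and it is nonzero because $p\le n-j+1$. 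Its order along $B=0$ is therefore exactly $p$, so $N_j=r-j+1$.

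The only mildly delicate case is $j=r=n$, where $E_r$ is the strict transform divisor and not an exceptional divisor. There $I_1(B)$ for a $1\times 1$ matrix $B$ is the ideal of a smooth hypersurface, and its order is $1$, which matches $r-j+1=1$. I do not expect any real obstacle beyond justifying that the order of a homogeneous ideal along the origin of the transverse factor equals its generating degree.
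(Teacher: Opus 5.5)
Your proposal is correct, and its main line, reading the coefficient of $E_j$ off the factorization \eqref{eq:total-transform-principalization} of Corollary~\ref{cor:GS-principalization} using that the $E_j$ are distinct prime divisors, is exactly the paper's one-line proof. Your supplementary check via $N_j=\operatorname{ord}_{Y_j}(I_r^{(n)})$ at a general rank-$(j-1)$ point together with Lemma~\ref{lem:simultaneous-local-reduction}(i) is also sound: it is the principle recorded at the end of Section~2, and it has the merit of not relying on the full factorization of Proposition~\ref{prop:factorizing-GS}.
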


\begin{proof} It follows directly from Corollary \ref{cor:GS-principalization}. 
\end{proof}

\begin{lemma}\label{lem:nu}
For $1\le j\le r$,
$$\nu_j=\binom{n-j+2}{2}.$$

\end{lemma}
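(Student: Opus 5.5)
The plan is to apply the principle recorded in the subsection on blowing up a smooth center. There it is shown that when the strict transform of $Y_j$ is blown up, its generic point lies over the open stratum $Y_j\setminus Y_{j-1}$. All earlier blow-ups are isomorphisms over this stratum, so the generic point of the center is not contained in any previously created exceptional divisor. Hence $K$ acquires coefficient $\operatorname{codim}-1$ along the new divisor, and $\nu_j=\codim_{S_n}Y_j^{(n)}$. The statement then reduces to computing the codimension of the symmetric rank locus $Y_j^{(n)}=\{\rank\le j-1\}$ inside $S_n=\Sym^2V^\vee$.

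For that codimension I would use Lemma~\ref{lem:simultaneous-local-reduction}(i) at a point $A$ of rank exactly $j-1$. Near $A$ we have $U\simeq U_0\times\Sym^2\C^{\,n-j+1,\vee}$ and $Y_j^{(n)}\cap U=U_0\times Y_1^{(n-j+1)}=U_0\times\{0\}$. Therefore
\[
\codim Y_j^{(n)}=\dim\Sym^2\C^{\,n-j+1}=\binom{n-j+2}{2}.
\]
This matches the recalled formula $\codim Y_r^{(n)}=\binom{n-r+2}{2}$, and the open stratum is dense in $Y_j^{(n)}$ because the rank loci are irreducible orbit closures.

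Two cases need checking against the definition of $E_j$ in Corollary~\ref{cor:GS-principalization}:
\begin{itemize}
\item For $j<r$, and for $j=r$ with codimension $>1$, $E_j$ is the exceptional divisor of a blow-up of a smooth center whose generic point avoids the earlier $E_i$. The discrepancy of $E_j$ is then computed on the open set over $Y_j\setminus Y_{j-1}$, where the tower is just the single blow-up of a smooth subvariety of codimension $\binom{n-j+2}{2}$. This gives $\nu_j-1=\binom{n-j+2}{2}-1$.
\item For $j=r=n$, $E_r$ is the strict transform of the hypersurface $Y_n$ and is not exceptional. Its discrepancy is $0$, so $\nu_r=1=\binom{2}{2}$, which agrees with the formula.
\end{itemize}

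The main subtlety is justifying that the discrepancy coefficient of $E_j$ on the final $X_r$ can be read off on that open set. It is a generic-point computation: it is unaffected by later blow-ups, because later centers have generic points off $E_j$ and their strict transforms keep the same coefficient. It is also unaffected by earlier ones, since those are isomorphisms near the generic point of the center.
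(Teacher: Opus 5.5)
Your proposal is correct and follows the paper's argument: the generic point of each center lies over $Y_j\setminus Y_{j-1}$, where the earlier blow-ups are isomorphisms, so $\nu_j=\codim_{S_0}Y_j=\binom{n-j+2}{2}$, and the hypersurface case $j=r=n$ is handled exactly as in the paper's remark after the lemma. Your extra derivation of the codimension via Lemma~\ref{lem:simultaneous-local-reduction}(i) works for $j\ge2$, since that part of the lemma needs rank $d=j-1\ge1$; for $j=1$ the value $\binom{n+1}{2}=\dim S_0$ is immediate because $Y_1=\{0\}$.
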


\begin{proof}
At the moment when $Y_j^{(j-1)}$ is blown up, its generic point lies over $Y_j\setminus Y_{j-1}.$ The previous blow-ups are isomorphisms over this open set. In particular, the generic point of $Y_j^{(j-1)}$ is not contained in any preceding exceptional divisor, and its codimension in $X_{j-1}$ equals the codimension of $Y_j$ in $S_0$:
\[
c_j=\binom{n-j+2}{2}.
\]
Blowing up a smooth center of codimension $c_j$ contributes discrepancy $c_j-1$. Since the center is not contained in any preceding exceptional divisor, there is no additional contribution to the coefficient of the newly created divisor. Thus $
\nu_j-1=c_j-1,$ and so the claim follows.
\end{proof}
\begin{remark}
For $k=1$ and $j=r=n$, the divisor $E_r$ is the strict transform of the hypersurface $Y_n$, so $N_r=1$ and $\nu_r=1$, which agrees with the same formula:
\[
\binom{n-r+2}{2}=\binom22=1.
\]
\end{remark}
Now, combining Lemmas~\ref{lem:order} and~\ref{lem:nu}, one gets
\[
N_js+\nu_j
=
(r-j+1)s+\binom{n-j+2}{2}
=
D_j(s).
\]
As a sequence, for a log-principalization $\pi:X_r\to S_0$, the local topological zeta function of $I_k$ at $0$ is
\begin{equation}\label{eq:DL}
Z^{\mathrm{top}}_{(S_0,S_k),0}(s)
=
\sum_{\varnothing\ne J\subseteq\{1,\dots,r\}}
\chi\!\left(E_J^\circ\cap\pi^{-1}(0)\right)
\prod_{j\in J}\frac{1}{D_j(s)},
\end{equation}
where
\[
E_J^\circ
=
\left(\bigcap_{j\in J}E_j\right)
\setminus
\left(\bigcup_{\ell\notin J}E_\ell\right).
\]
So, the rest of the proof consists in computing the Euler characteristics $\chi(E_J^\circ\cap\pi^{-1}(0)).
$.

\section{Fiber over the origin and boundary strata}

\begin{lemma}\label{lem:fiber}
Let $F:=\pi^{-1}(0).$ Then $F$ is the final strict transform of the exceptional divisor of the first blow-up of $Y_1=\{0\}$. In particular, $F=E_1$
and 
\[
E_J^\circ\cap F=\varnothing
\qquad\text{unless }1\in J.
\]
\end{lemma}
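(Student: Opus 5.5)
The plan is to compare the full preimage of the origin with the first exceptional divisor at each stage of the tower $X_0=S_0\leftarrow X_1\leftarrow\cdots\leftarrow X_r$.

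\textbf{First stage.} For $\pi_1:X_1=\Bl_{\{0\}}S_0\to S_0$, the fibre $\pi_1^{-1}(0)$ is exactly the exceptional divisor $E_1^{(1)}\simeq\PP(\Sym^2V^\vee)$. So the claim holds at stage one.

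\textbf{Inductive step.} Write $\rho_j:X_j\to X_1$ for the composite of the later blow-ups $\pi_2,\dots,\pi_j$. Each later center is the strict transform $Y_j^{(j-1)}$, which is closed in $X_{j-1}$. Hence $(\pi_1\circ\rho_j)^{-1}(0)=\rho_j^{-1}(E_1^{(1)})$ is the total transform of $E_1^{(1)}$. I have to show this total transform equals the strict transform.

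That amounts to two checks: no later exceptional divisor $E_j$, $j\ge2$, lies entirely over the origin, and no later center is contained in the current transform of $E_1$. Lemma~\ref{lem:nu} gives both. The generic point of $Y_j^{(j-1)}$ lies over $Y_j\setminus Y_{j-1}$ with $j\ge2$, hence over a nonzero matrix. So the center is not contained in the preimage of $0$, and its exceptional divisor $E_j$ maps onto $Y_j\neq\{0\}$ generically.

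Now take any point $\xi\in X_r$ with $\pi(\xi)=0$. Its image in $X_1$ lies on $E_1^{(1)}$. Since each blow-up is an isomorphism off its center, the full preimage of $E_1^{(1)}$ is the union of the strict transform of $E_1$ and those exceptional divisors $E_j$ whose centers meet the transform of $E_1$. These $E_j$ are not contained in the fibre over $0$. So I need a local argument to see that the fibre over $0$ contains no extra components.

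\textbf{Local argument.} Use Lemma~\ref{lem:simultaneous-local-reduction}(ii). Near any point of $E_1$ in $X_1$ there are coordinates with $E_1=(t=0)$, and $\pi_1^{-1}(0)=(t=0)$ scheme-theoretically, since every $x_{ij}$ is $t$ times a coordinate function. The later centers are $\A^1_t\times\A^{n-1}\times Y^{(n-1)}_{\bullet}$, products with the $t$-line. Blowing up such a product center commutes with the projection to $t$. So in the final space the function $t$ still defines a divisor, namely the product of $\{t=0\}$ with the tower for $B$, and this divisor is the strict transform of $E_1$. The same holds for the $\A^1_t$-factor, and over $\A^{n-1}$ nothing changes.

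Therefore $\pi^{-1}(0)=\{t=0\}=E_1$ locally, and hence globally. The final step (blowing up $Y_r^{(r-1)}$, or doing nothing when $r=n$) is covered by the same product argument.

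\textbf{Consequence.} Every stratum $E_J^\circ$ with $1\notin J$ is disjoint from $E_1$, and hence from $F$.

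\textbf{Main obstacle.} The delicate point is justifying that the product structure persists through the whole tower, i.e.\ that the later centers really are of the form (smooth factor containing the $t$-direction) $\times$ (residual locus). This I take from the simultaneous identification in Lemma~\ref{lem:simultaneous-local-reduction}(ii), as already used in Case~2 of Proposition~\ref{prop:factorizing-GS}.
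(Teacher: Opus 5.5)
Your proof is correct, but its decisive step differs from the paper's.

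\textbf{The paper's route.} The paper argues globally, without charts. At each stage the center $C_j:=Y_j^{(j-1)}$ is irreducible and not contained in the current transform of $E_1$, because its generic point lies over $Y_j\setminus Y_{j-1}$. Hence the preimage of the transform of $E_1$ is the closure of the preimage of its complement, i.e.\ its strict transform, and one inducts.

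\textbf{Your route.} You identify exactly this input in your ``two checks'', but then judge it insufficient. You switch instead to the product structure of the whole tower over the charts of Lemma~\ref{lem:simultaneous-local-reduction}(ii), and show that $\pi^{-1}(0)$ is cut out by $t$ in the final space.

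\textbf{Your two checks already suffice.} Since $E_1$ is Cartier, $\pi_j^{-1}(E_1)$ is the support of
$\pi_j^*E_1=\widetilde E_1+\operatorname{ord}_{C_j}(E_1)\,E_j$.
The coefficient vanishes because $C_j\not\subset E_1$. This is precisely the step the paper leaves implicit.

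\textbf{A minor inaccuracy.} You assert that the preimage of $E_1^{(1)}$ contains the exceptional divisors $E_j$ whose centers meet $E_1$. It contains only $\pi_j^{-1}(C_j\cap E_1)$, which lies inside $\widetilde E_1$. You do not use this claim, so nothing breaks.

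\textbf{What each route buys.} Your local route gives a scheme-theoretic statement: $\pi^*\mathfrak m_0=(t)$ locally, so $F=E_1$ holds even as schemes and $F$ is reduced. This parallels what the paper states explicitly only in the catalecticant Lemma~\ref{lem:cat-fiber}. The cost is that you must know the product decomposition persists through every later blow-up, including the final one; you correctly borrow this from Case~2 of Proposition~\ref{prop:factorizing-GS}. The paper's route is shorter and needs no coordinates.
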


\begin{proof}
After the first blow-up, $\pi_1:X_1=\Bl_0(S_0)\longrightarrow S_0,$ the fiber over $0$ is $E_1^{(1)}=\PP(S_0)=\PP(\Sym^2V^\vee).$  The second center is $Y^{(1)}_2=\overline{\pi_1^{-1}(Y_2\setminus Y_1)}$
Since $Y_2^{(1)}$ has a dense open subset mapping isomorphically to $ Y_2\setminus Y_{1}$,  $Y_2^{(1)}$ is not contained in $E_1^{(1)}$. However, $Y_2^{(1)} \cap E_1^{(1)}=\PP(Y_2)$. Hence, in the second blow up $\pi_2: X_2 = \Bl_{Y_2^{(1)}} (X_1)\rightarrow X_1,$ one has $$(\pi_1\circ \pi_2)^{-1}(0)=\pi^{-1}_2(E^{(1)}_1)=\overline{\pi^{-1}_2(E^{(1)}_1 \setminus \PP(Y_2))}=:E^{(2)}_1.$$ Inductively, the full inverse image of $0$ is the final strict transform of the first exceptional divisor. This is $E_1$. The last claim is now obvious.
\end{proof}

\begin{remark}
Inside $F$, the successive restrictions of the Gaube--Schober blow-ups are precisely the successive blow-ups of $\PP(\Sym^2V^\vee)$ along the strict transforms of the projective symmetric rank loci. This is the truncated complete-quadrics construction relevant to the fixed determinantal ideal $I_r$.
\end{remark}

Fix $ J=\{1=j_0<j_1<\cdots<j_t=b\}\subseteq\{1,\dots,r\}$ and  put
$$ d_\ell:=j_\ell-j_{\ell-1}\;(1\le\ell\le t) \text{ and } m:=n-b+1.
$$
Then $d_1+\cdots+d_t=b-1$. Define the kernel flag variety
$$\cF_J := \left\{ V=K_0\supset K_1\supset\cdots\supset K_t: \dim(K_{\ell-1}/K_\ell)=d_\ell\right\}.
$$
On $\cF_J$, let $V\otimes\OO_{\cF_J}= \cK_0\supset\cK_1\supset\cdots\supset\cK_t$ denote the tautological subbundles and set $ \cG_\ell:=\cK_{\ell-1}/\cK_\ell.$ Then $ \rank\cG_\ell=d_\ell,$ and $\rank\cK_t=m.$ For $d\ge1$, define
$$Q_d:=\left\{[q]\in\PP(\Sym^2\CC^{d\,\vee}):\rank q=d\right\}.
$$
For $m\ge1$ and $a\ge1$, define
$$
U_{m,a} :=\left\{[q]\in\PP(\Sym^2\CC^{m\,\vee}):\rank q\ge a\right\}.
$$

\begin{proposition}\label{prop:strata}
The stratum $E_J^\circ\cap F$ is a Zariski locally trivial fibration over $\cF_J$ with fiber
\begin{equation}\label{eq:fiberproduct}
Q_{d_1}\times\cdots\times Q_{d_t}\times U_{m,r-b+1}.
\end{equation}
\end{proposition}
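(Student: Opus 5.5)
The plan is to argue by induction on $t$, the number of steps of $J$ after $j_0=1$, using the simultaneous local reduction of Lemma~\ref{lem:simultaneous-local-reduction} to pass from $(n,r)$ to smaller residual problems. The key object is the map $E_1\to \PP(\Sym^2V^\vee)$, the restriction of $X_1\to S_0$ to the exceptional divisor. Every point $\xi\in F=E_1$ (Lemma~\ref{lem:fiber}) maps to a projective form $[q]$. Say $\operatorname{rank} q=d_1$, with kernel $K_1\subset V$ of dimension $n-d_1$. Then $\xi$ lies on $E_{j}$ for $j=1+d_1$, provided $1+d_1\le r$, and on no $E_j$ with $1<j\le d_1$. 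Indeed, by Lemma~\ref{lem:simultaneous-local-reduction}(ii), near $E_1$ the strict transform of $Y_p$ is $\A^1\times\A^{n-1}\times Y_{p-1}(B)$. Combining this with part (i) applied to the residual matrix $B$ at a point of rank $d_1-1$ shows that the next center met is the one with index $j_1=d_1+1$.

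\textbf{Step 1: the first step $j_1-j_0=d_1$.} First restrict to the open set of points whose image $[q]$ has rank exactly $d_1$. This set is a bundle over $\Gr(n-d_1,V)\ni K_1$ with fiber the nondegenerate forms on $V/K_1$ up to scalar, namely $Q_{d_1}$. Over it, the blow-ups with centers $Y_2,\dots,Y_{d_1}$ are isomorphisms, since those centers do not meet this locus by Lemma~\ref{lem:simultaneous-local-reduction}(i). The centers $Y_{d_1+1},\dots,Y_r$ are, locally, products with the Gaube--Schober tower for the residual symmetric form on a space of dimension $n-d_1$, for indices shifted by $d_1$. Intrinsically, the residual form lives on $\Sym^2K_1^\vee$: the Schur complement $B$ restricted to the kernel direction is canonical up to the identifications. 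The point $\xi$ must then also lie on $E_{j_1}$, whose fiber structure over the rank-$d_1$ locus is the blow-up along the smooth center $\A^{\cdot}\times\{0\}\subset \A^{\cdot}\times \Sym^2K_1^\vee$. So the normal direction is $\PP(\Sym^2K_1^\vee)$, and we land in the exceptional divisor of the residual problem $(n-d_1,\,r-d_1)$.

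\textbf{Step 2: induction.} The preceding analysis identifies $E_J^\circ\cap F$, restricted over a fixed $(K_1,[q_1])$, with the stratum $E_{J'}^\circ\cap F'$ for the residual problem on $K_1$. Here $J'=\{1=j_1-d_1<j_2-d_1<\cdots<j_t-d_1\}$ and $F'$ is the residual fiber over the origin. By induction, that stratum fibers over the flag variety $\cF_{J'}$ of $K_1$ with fiber $Q_{d_2}\times\cdots\times Q_{d_t}\times U_{m,r-b+1}$, where $m=\dim K_t$. Assembling the two bundle structures yields the fibration over $\cF_J$ with fiber \eqref{eq:fiberproduct}. Local triviality in the Zariski topology comes from trivializing the tautological bundles $\cK_\ell$ and $\cG_\ell$ on open cells of the flag variety. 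The congruence action of $\GL(V)$ lifts to the whole tower, which makes these identifications compatible.

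\textbf{Step 3: the base case $t=0$ and the last factor.} When $J$ ends at $b=j_t$, the point must avoid every $E_j$ with $b<j\le r$. In the residual problem on $K_t$, of dimension $m=n-b+1$ with index $r-b+1$, this means the residual projective form $[q_t]\in\PP(\Sym^2K_t^\vee)$ lies on no strict transform of $Y_2,\dots,Y_{r-b+1}$. Equivalently, $\operatorname{rank} q_t\ge r-b+1$, which gives exactly $U_{m,r-b+1}$. For $b=r$ the condition reads $\operatorname{rank}\ge1$, which is automatic, consistent with $U_{m,1}=\PP(\Sym^2)$. I expect the main obstacle to be making the identification of Step~1 intrinsic, so that the residual matrix $B$ really corresponds to forms on $K_1$. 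A secondary difficulty is ensuring the Euler-characteristic-relevant structure is a genuine Zariski-locally-trivial fibration rather than merely a pointwise bijection. I would handle both by writing the Schur-complement chart equivariantly over an affine cell of $\Gr(n-d_1,V)$.
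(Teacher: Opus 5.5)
Your proposal is correct and is essentially the paper's argument. You read off $j_1=\rank q_1+1$ from Lemma~\ref{lem:simultaneous-local-reduction}, pass via the Schur complement to the residual Gaube--Schober problem on $\Sym^2K_1^\vee$ with indices shifted by $d_1$, iterate down the kernel flag, and get the last factor $U_{m,r-b+1}$ from avoiding $E_{b+1},\dots,E_r$. The only difference is packaging: you induct on $t$ through the fibration over the rank-$d_1$ locus with residual strata as fibers (the structure the paper itself uses in the catalecticant case), whereas the paper builds the full datum $(K_\bullet,[q_1],\dots,[q_t],[q_\infty])$ at once and writes down an inverse map to get $E_J^\circ\cap F\simeq\mathcal Z_J$, which is how it handles, only briefly, the chart-independence issue you flag as the main obstacle.
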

\begin{proof}
We shall repeatedly use Lemma~\ref{lem:simultaneous-local-reduction}. The important consequence of that lemma is the following: after extracting a constant-rank block of rank $d$, not only the ambient symmetric matrix but the whole remaining chain of rank loci and blow-up centers is identified, locally and up to a smooth factor, with the corresponding chain for a residual symmetric matrix on the kernel. More precisely, if $q$ has rank $d$ and kernel $K$, then locally near $q$,
$$
Y_j \quad\longleftrightarrow\quad \left\{ B\in\Sym^2K^\vee : \rank B\leq j-d-1 \right\},
$$
and the subsequent blow-ups are the corresponding residual Gaube--Schober blow-ups. We shall refer to this as the
\emph{residual compatibility} of the blow-up tower. The proof is divided in several steps.

\medskip

\noindent
\textbf{Step 1: the first rank datum.}
A point of the first exceptional divisor is a projective symmetric form $[q_1]\in \mathbb{P}(\Sym^2V^\vee).$
After the first blow-up, one has $$
E_1\cap Y_j^{(1)}=\PP(Y_j)= \{[q]\in\PP(\Sym^2V^\vee):\rank q\leq j-1\}.
$$
Let $J=\{1=j_0<j_1<\cdots<j_t=b\},$ and let $x\in E_J^\circ\cap F.$
Since $j_1$ is the first index after $1$ belonging to $J$, the point $x$ lies on $E_{j_1}$ and avoids $ E_2,\ldots,E_{j_1-1}.$ By Lemma~\ref{lem:simultaneous-local-reduction}, the successive centers $
Y_2^{(1)},Y_3^{(2)},\ldots$ restricted to $E_1$ are exactly the successive strict transforms of
the projective rank loci $\{\rank q\leq 1\}, \{\rank q\leq 2\}, \ldots.$ Consequently, the first center in the blow-up sequence containing the projective direction $[q_1]$ is the one indexed by $\rank(q_1)+1.$ Since the first exceptional divisor after $E_1$ containing $x$ is $E_{j_1}$, it follows that $\rank(q_1)=j_1-1=d_1.$ Denote $K_1:=\ker q_1.$
Then $\dim(V/K_1)=d_1,$ and $q_1$ induces a nondegenerate projective quadratic form $[\overline q_1]
\in \PP\bigl(\Sym^2(V/K_1)^\vee\bigr)_{\mathrm{nd}}.$
Thus the first datum is
$$
K_1\subset V, \; [\overline q_1] \in\PP\bigl(\Sym^2(V/K_1)^\vee\bigr)_{\mathrm{nd}}.
$$

\medskip

\noindent
\textbf{Step 2: the normal slice.}
Fix a form $q$ of rank $d$ with kernel $K$. Choose a splitting $V=L\oplus K$ such that $q|_L$ is nondegenerate. A nearby symmetric matrix can be written as
$$
M=
\begin{pmatrix}
A&B\\
B^t&C
\end{pmatrix},
$$
with $A$ symmetric and invertible. The Schur-complement congruence gives $$
M\sim
\begin{pmatrix}
A&0\\
0&C-B^tA^{-1}B
\end{pmatrix}.
$$
Thus the restriction map to $K$ identifies the normal space to the
rank-$d$ stratum with
$$
N_{\{\rank=d\}/\Sym^2V^\vee,q}\simeq\Sym^2K^\vee.
$$
At the level of the corresponding projective rank stratum, the normal bundle may differ from $\Sym^2K^\vee$ by a line-bundle factor coming from the tautological projective line. This causes no ambiguity here, since projectivization is unchanged by tensoring with a line bundle. Hence the exceptional directions are canonically represented by projective symmetric forms on $K$. More importantly, Lemma~\ref{lem:simultaneous-local-reduction} shows that this is not merely a tangent-space description: the whole nested family of subsequent rank loci is, locally, the corresponding nested family of
symmetric rank loci in $\Sym^2K^\vee$.

\medskip

\noindent
\textbf{Step 3: iteration.} Coming back to the form $q_1$ of rank $d_1=j_1-1$ with kernel $K_1$, by Step~2 and the residual compatibility furnished by Lemma~\ref{lem:simultaneous-local-reduction}, after the blow-up producing $E_{j_1}$, the remaining local blow-up problem is, up to a smooth factor, the Gaube--Schober problem for the residual symmetric
space $\Sym^2K_1^\vee.$ The exceptional direction created at this stage is therefore represented by $[q_2]\in\PP(\Sym^2K_1^\vee).$ The divisors $E_{j_1+1},\ldots,E_{j_2-1}$ correspond, under the residual identification, to the successive residual rank loci of ranks $\leq 1,\ldots,\leq d_2-1,$ where $d_2=j_2-j_1.$ Since the point lies on $E_{j_2}$ and avoids all these intermediate divisors, the same argument as in Step~1 gives $\rank(q_2)=d_2.$
Set $K_2:=\ker q_2\subset K_1.$ Then $\dim(K_1/K_2)=d_2,$ and $q_2$ induces a nondegenerate projective quadratic form on
$K_1/K_2.$

Repeating this argument recursively, we obtain a flag $V=K_0\supset K_1\supset\cdots\supset K_t$
such that $\dim(K_{\ell-1}/K_\ell)=d_\ell,$ together with nondegenerate projective symmetric forms
\[
[q_\ell]
\in
\PP\bigl(
\Sym^2(K_{\ell-1}/K_\ell)^\vee
\bigr)_{\mathrm{nd}},
\;
1\leq \ell\leq t.
\]
Since $d_1+\cdots+d_t=b-1,$ we have $\dim K_t=n-b+1=m.$

\medskip

\noindent
\textbf{Step 4: the residual factor.}
By Lemma~\ref{lem:simultaneous-local-reduction}, the remaining local problem is, up to a smooth factor, the symmetric determinantal problem in $\Sym^2K_t^\vee,$ and the original ideal $I_r$ becomes the residual ideal $I_{r-b+1}$
of the residual symmetric matrix. At the stage corresponding to the last selected divisor $E_b$, the
residual projective datum is therefore represented by $[q_\infty]\in\PP(\Sym^2K_t^\vee).$
The later divisors $E_{b+1},\ldots,E_r$ are, by the same residual compatibility, exactly the boundary divisors
arising from the successive residual rank loci $\rank q_\infty\leq 1,\ldots, \rank q_\infty\leq r-b.$
Since $b=\max J$, the point lies on none of these later divisors. Hence
$$
\rank(q_\infty)\geq r-b+1,
$$
that is, $[q_\infty]\in U_{m,r-b+1}.$

When $b=r$ and $k>1$, the last divisor $E_r$ is created by the final blow-up of the smooth strict transform of $Y_r$. In residual coordinates the center is the origin in $\Sym^2K_t^\vee,$ and the exceptional direction is an arbitrary nonzero projective quadratic form; this is precisely $U_{m,1}.$

When $k=1$, the final strict transform is already a divisor. In this
case $b=r=n$, hence $m=1$, and $U_{1,1}$ consists of a single point, which agrees with the same description.

\medskip

\noindent
\textbf{Step 5: the bijection.} Denote $G_\ell:=K_{\ell-1}/K_\ell$ and define  $$
\mathcal Q_\ell= \left\{ [q]\in\PP_{F_J}(\Sym^2G_\ell^\vee): q\text{ is nondegenerate} \right\},$$
and 
$$
\mathcal U = \left\{ [q]\in \PP_{F_J}(\Sym^2K_t^\vee): \rank q\geq r-b+1\right\}.
$$
Set $\mathcal Z_J = \mathcal Q_1 \times_{\cF_J}\cdots\times_{\cF_J} \mathcal Q_t \times_{F_J}\mathcal U.$ By Steps~1--4, a point of $E_J^\circ\cap F$ determines intrinsically:
\begin{enumerate}
\item the successive kernel flag $ V=K_0\supset K_1\supset\cdots\supset K_t;$

\item nondegenerate projective forms $ [q_\ell]\in \PP(\Sym^2G_\ell^\vee)_{\mathrm{nd}},\;1\leq\ell\leq t;$

\item a residual projective form $[q_\infty]\in \PP(\Sym^2K_t^\vee),\;\rank(q_\infty)\geq r-b+1.$
\end{enumerate}
Thus we obtain a map $\phi:E_J^\circ\cap F\longrightarrow \mathcal Z_J.$

Conversely, suppose that $(K_\bullet,[q_1],\ldots,[q_t],[q_\infty])\in Z_J.$ The first form $[q_1]$ is a rank-$d_1$ projective form on $V$ with kernel $K_1$, and hence determines the corresponding point of the first exceptional divisor.
By Lemma~\ref{lem:simultaneous-local-reduction}, in a neighbourhood of this point the remaining blow-up tower is the residual Gaube--Schober tower in $\Sym^2K_1^\vee.$ The projective form $[q_2]$, of rank $d_2$, therefore determines
uniquely the next exceptional direction. Its kernel is $K_2$, and the residual problem becomes the corresponding one in
$\Sym^2K_2^\vee.$ Iterating, $[q_\ell]$ determines uniquely the exceptional direction at the $\ell$-th selected stage, and its exact rank $d_\ell$ guarantees that all intermediate boundary divisors are avoided. After the last selected stage, the form $[q_\infty]$ determines the residual point. The condition $\rank(q_\infty)\geq r-b+1$ is exactly the condition that none of the later divisors $E_{b+1},\ldots,E_r$ is met.

Thus, the data reconstruct a unique point of $E_J^\circ\cap F.$ This gives an inverse map $\psi:\mathcal Z_J\longrightarrow E_J^\circ\cap F.$ By construction, $ \psi\circ\phi=\id_{E_J^\circ\cap F},$ and $\phi\circ\psi=\id_{Z_J}.$

\medskip

\noindent
\textbf{Step 6: algebraicity and local triviality.}
It remains to verify that the preceding correspondence is algebraic. On every constant-rank stratum, the kernel of the universal symmetric map $V\otimes\OO\longrightarrow V^\vee\otimes\OO$ is an algebraic vector subbundle. Therefore the successive kernels $K_1,\ldots,K_t$ and quotient bundles $G_\ell=K_{\ell-1}/K_\ell$ vary algebraically.

More importantly, Lemma~\ref{lem:simultaneous-local-reduction} gives, on each of the corresponding Zariski charts, an algebraic product description of the entire residual blow-up tower. In these coordinates the maps $\phi$ and $\psi$ above are exactly the maps recording, respectively reconstructing, the successive projective exceptional coordinates. Hence both maps are regular on these charts. Since their construction uses only the  kernel flags and induced quotient forms, the local maps agree on overlaps. Consequently they glue to an algebraic isomorphism
\begin{equation}
\label{eq:stratum-isomorphism}
E_J^\circ\cap F \simeq \cZ_J.
\end{equation}

Finally, choose a Zariski open subset $W\subset \cF_J$ on which all vector bundles $G_1,\ldots,G_t,K_t$
are trivial. Then $\mathcal Q_\ell|_W\simeq W\times Q_{d_\ell},$ and $\mathcal U|_W\simeq W\times U_{m,r-b+1}.$ Using~\eqref{eq:stratum-isomorphism}, we obtain
$$(E_J^\circ\cap F)|_W \simeq W\times Q_{d_1}\times\cdots\times Q_{d_t} \times U_{m,r-b+1}.
$$
Hence, $E_J^\circ\cap F\longrightarrow \cF_J$ is a Zariski locally trivial fibration with fiber $Q_{d_1}\times\cdots\times Q_{d_t} \times U_{m,r-b+1}.$
\end{proof}

\section{Euler characteristic of boundary strata}
\begin{lemma}\label{lem:flagchi}
With the notation above,
\begin{equation}\label{eq:flagchi}
\chi(\cF_J)
=
\frac{n!}{m!\,d_1!\cdots d_t!}.
\end{equation}
\end{lemma}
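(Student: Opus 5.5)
The plan is to identify $\cF_J$ with a partial flag variety of $V$ and count its fixed points under a maximal torus. Setting $\dim K_\ell = n-(d_1+\cdots+d_\ell)$, a point of $\cF_J$ is a decreasing flag $V=K_0\supset K_1\supset\cdots\supset K_t$ with successive quotient dimensions $d_1,\dots,d_t$ and last piece $K_t$ of dimension $m=n-(b-1)$. Thus $\cF_J\simeq \GL_n/P$ for the parabolic subgroup $P$ associated with the composition $(d_1,\ldots,d_t,m)$ of $n$. In particular $\cF_J$ is a smooth projective variety, so its compactly supported Euler characteristic coincides with its ordinary one.

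\textbf{Step 1: torus fixed points.} Let $T\subset\GL(V)$ be the diagonal torus for a basis $e_1,\ldots,e_n$. Since $\chi(\cF_J)=\chi(\cF_J^T)$ (for instance via Białynicki-Birula, or because the nontrivial $\C^*$-orbits have Euler characteristic zero), it suffices to count the fixed points. A $T$-fixed flag consists of coordinate subspaces, so each $K_\ell$ is spanned by a subset $S_\ell\subset\{1,\ldots,n\}$ of the basis, with $S_0\supset S_1\supset\cdots\supset S_t$. These fixed flags correspond exactly to ordered set partitions of $\{1,\ldots,n\}$ into blocks $S_{\ell-1}\setminus S_\ell$ of sizes $d_1,\ldots,d_t$ together with a final block $S_t$ of size $m$. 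Their number is the multinomial coefficient $n!/(m!\,d_1!\cdots d_t!)$.

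\textbf{Alternative check.} One can also argue inductively. The map $\cF_J\to\Gr(n-d_1,V)$, given by $K_\bullet\mapsto K_1$, is a Zariski locally trivial fibration whose fiber is the analogous flag variety for $K_1$. Euler characteristics are multiplicative in such fibrations, and $\chi(\Gr(k,n))=\binom nk$ by the Schubert cell decomposition. Multiplying these binomial coefficients telescopes to the same multinomial.

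There is no real obstacle here. The only point requiring care is the bookkeeping: checking that the dimensions $d_\ell$ and $m$ sum to $n$, which holds because $d_1+\cdots+d_t=b-1$ and $m=n-b+1$, and treating the degenerate case $t=0$ (that is, $J=\{1\}$, where $\cF_J$ is a point and the formula gives $n!/n!=1$).
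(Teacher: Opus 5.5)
Your proposal is correct and is essentially the paper's argument. The paper counts the Schubert cells of the partial flag variety, which are indexed by the cosets of $\mathfrak{S}_{d_1}\times\cdots\times\mathfrak{S}_{d_t}\times\mathfrak{S}_m$ in $\mathfrak{S}_n$; these cells are exactly the $B$-orbits through your $T$-fixed coordinate flags, so both arguments count the same multinomial coefficient, and you simply use $\chi(X)=\chi(X^T)$ (the tool of Lemma~\ref{lem:Qchi}) instead of the affine-cell decomposition.
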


\begin{proof}
The flag variety $\cF_J$ is the partial flag variety whose successive quotient dimensions are
$d_1,\dots,d_t,m.$ Its Schubert-cell decomposition is indexed by cosets of $\mathfrak{S}_{d_1}\times\cdots\times \mathfrak{S}_{d_t}\times \mathfrak{S}_m$ in $\mathfrak{S}_n$ (cf. \cite[$\S$1.2]{Bri05}) where $\mathfrak{S}_p$ is the symmetric group on $p$ letters. Hence, the formula follows.
\end{proof}
\begin{lemma}\label{lem:Qchi}
For $d\ge1$,
\begin{equation}\label{eq:Qchi}
\chi(Q_d)
=
\begin{cases}
1,&d=1,\\
1,&d=2,\\
0,&d\ge3.
\end{cases}
\end{equation}
\end{lemma}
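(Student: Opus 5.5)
The plan is to compute $\chi(Q_d)$ through the additivity of the compactly supported Euler characteristic over the rank stratification of $\PP(\Sym^2\CC^{d\,\vee})$. For $d\ge1$ set $a_d:=\chi(Q_d)$, with the convention $a_0:=1$.

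The first step is to write $\PP(\Sym^2\CC^{d\,\vee})$ as the disjoint union of the locally closed loci of forms of exact rank $e$, for $1\le e\le d$. The locus of rank exactly $e$ fibres over the Grassmannian $\Gr(d-e,d)$ of kernels $K$: a form with kernel $K$ is the same as a nondegenerate form on $\CC^d/K$. This fibration is Zariski locally trivial, since the tautological bundles are locally trivial, and its fibre is $Q_e$. Multiplicativity of $\chi$ then gives
\[
\binom{d+1}{2}=\chi\bigl(\PP^{\binom{d+1}{2}-1}\bigr)=\sum_{e=1}^{d}\binom{d}{e}a_e .
\]
Solving this recursively yields the small cases. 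For $d=1$, $a_1=1$. For $d=2$ the relation reads $3=2a_1+a_2$, so $a_2=1$; geometrically, $Q_2$ is $\PP^2$ minus a smooth conic, with $\chi=3-2$. For $d=3$ it reads $6=3+3+a_3$, giving $a_3=0$.

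The second step handles general $d\ge3$. I would verify that $a_e=1$ for $e\le 2$ and $a_e=0$ for $e\ge3$ satisfy the identity for every $d$. Indeed,
\[
\sum_{e=1}^{d}\binom{d}{e}a_e=\binom d1+\binom d2=d+\frac{d(d-1)}2=\binom{d+1}{2}.
\]
Since the recursion determines each $a_d$ uniquely from the earlier values (the coefficient of $a_d$ is $1$), induction on $d$ gives $a_d=0$ for all $d\ge3$.

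As an independent sanity check one could use a torus action. The diagonal torus $(\CC^*)^d$ acts on $Q_d$, and $\chi(Q_d)$ equals the Euler characteristic of the fixed locus. The torus-fixed points of $\PP(\Sym^2)$ are the coordinate monomials $[x_ix_j]$. Of these, only $[x_1^2]$ (when $d=1$) and $[x_1x_2]$ (when $d=2$) are nondegenerate, and for $d\ge3$ none are. This is an alternative route if one prefers to avoid the recursion.

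The only mildly delicate point is justifying the local triviality of the rank-$e$ stratum over the Grassmannian. This is standard: over an open set where the quotient $\CC^d/K$ is trivialized, the stratum is the product of that open set with $Q_e$. Multiplicativity of $\chi$ for Zariski locally trivial fibrations then applies, so I expect no real obstacle here.
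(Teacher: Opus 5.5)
Your proof is correct, but your main argument takes a different route from the paper. The paper's proof is the torus argument you list only as a sanity check. The diagonal torus has isolated fixed points $[x_i^2]$ and $[x_ix_j]$ on $\PP(\Sym^2\CC^{d\,\vee})$, of ranks $1$ and $2$. So $Q_d^T$ is a single point for $d\le 2$ and empty for $d\ge 3$, and $\chi(Q_d)=\chi(Q_d^T)$.

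Your main argument instead runs the paper's Lemma~\ref{lem:rankchi} backwards. You use the same Grassmannian fibration of the exact-rank strata with fibre $Q_e$. But you take $\chi\bigl(\PP^{\binom{d+1}{2}-1}\bigr)=\binom{d+1}{2}$ as input and solve the unitriangular system $\binom{d+1}{2}=\sum_{e=1}^{d}\binom de\chi(Q_e)$ for $\chi(Q_d)$. The paper does the opposite: it feeds in $\chi(Q_q)$ and gets $\chi(U_{m,1})=\binom{m+1}{2}$ as output.

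What your route buys:
\begin{itemize}
\item It uses only additivity, multiplicativity for Zariski locally trivial fibrations, and $\chi(\Gr(d-e,d))=\binom de$. These are tools the paper needs anyway, and the last one follows from Schubert cells with no localization theorem.
\item The same trick handles the other matrix types verbatim. For instance, $ac=\sum_q\binom aq\binom cq\chi(P_q)$ forces $\chi(P_1)=1$ and $\chi(P_q)=0$ for $q\ge 2$. Likewise, $\binom m2=\sum_e\binom m{2e}\chi(A_{2e})$ forces the skew-symmetric values.
\end{itemize}

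What the paper's route buys is brevity, no induction, and a conceptual reason for the vanishing: for $d\ge 3$ there are no torus-fixed nondegenerate forms.

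A minor point: your convention $a_0:=1$ is never used, since the sum starts at $e=1$.
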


\begin{proof}
Let $T=(\CC^\times)^d$ act diagonally on $\CC^d$. This induces an action on $\PP(\Sym^2\CC^{d\,\vee}).$
The $T$-weights of $\Sym^2\CC^{d\,\vee}$ are
$$2e_i\quad\text{and}\quad e_i+e_j\quad(i<j),
$$ where $e_1,\ldots,e_d$ are standard basis of the character lattice of $T$ which is $\mathbb
Z^d$. Hence the $T$-fixed points in the projective space are exactly the coordinate lines represented by
$$ x_i^2 \quad\text{and}\quad x_ix_j\ (i<j).
$$
The form $x_i^2$ has rank $1$, whereas $x_ix_j$ has rank $2$. Therefore the nondegenerate locus $Q_d$ has:
$$
1\text{ fixed point if }d=1,\qquad
1\text{ fixed point if }d=2,\qquad
0\text{ fixed points if }d\ge3.
$$
It is well-known that for a complex algebraic variety with an algebraic torus action, $\chi(X)=\chi(X^T).$
Thus \eqref{eq:Qchi} follows.
\end{proof}

Define
$$
\mathcal R_{m,q}
:=
\{[A]\in\PP(\Sym^2\CC^{m\,\vee}):\rank A=q\}.
$$

\begin{lemma}\label{lem:rankchi}
For $1\le q\le m$,
$$
\chi(\mathcal R_{m,q})=\binom mq\chi(Q_q).
$$
Consequently,
\begin{equation}\label{eq:Uchi}
\chi(U_{m,a}) =
\begin{cases}
\displaystyle\binom{m+1}{2},&a=1,\\[2mm]
\displaystyle\binom m2,&a=2,\\[2mm]
0,&a\ge3.
\end{cases}
\end{equation}
\end{lemma}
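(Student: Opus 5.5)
The plan is to first compute $\chi(\mathcal R_{m,q})$ using a fibration over a Grassmannian, and then obtain $\chi(U_{m,a})$ by summing over the rank strata.

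\textbf{Step 1: the rank-$q$ stratum.} Sending $[A]$ to its kernel gives a map
$$\mathcal R_{m,q}\longrightarrow \Gr(m-q,m),\qquad [A]\longmapsto \ker A.$$
The fiber over a subspace $K$ consists of the projective nondegenerate forms on $\CC^m/K$, that is, a copy of $Q_q$. This map is Zariski locally trivial. Over an open subset of the Grassmannian where the tautological quotient bundle is trivialized, a form with kernel $K$ is the same as a nondegenerate form on the quotient, pulled back. The same kernel-bundle argument as in Step~6 of Proposition~\ref{prop:strata} applies here. Multiplicativity of $\chi$ for Zariski locally trivial fibrations, together with $\chi(\Gr(m-q,m))=\binom mq$ from the Schubert cell count (as in Lemma~\ref{lem:flagchi}), then gives
$$\chi(\mathcal R_{m,q})=\binom mq\chi(Q_q).$$
Alternatively, one can run the torus-fixed-point argument of Lemma~\ref{lem:Qchi} directly. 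The fixed points of rank $q$ are the forms $x_i^2$ (rank $1$) and $x_ix_j$ (rank $2$). There are $m=\binom m1$ of rank $1$ and $\binom m2$ of rank $2$, and none of rank $q\ge3$. This agrees with the formula, and it is the cleanest check of the first assertion.

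\textbf{Step 2: summing over ranks.} The locus $U_{m,a}$ is the disjoint union of the locally closed strata $\mathcal R_{m,q}$ for $a\le q\le m$. Since $\chi$ with compact support is additive,
$$\chi(U_{m,a})=\sum_{q=a}^{m}\binom mq\chi(Q_q).$$
By Lemma~\ref{lem:Qchi}, only the terms $q=1$ and $q=2$ survive:
\begin{itemize}
\item For $a=1$ the sum is $m+\binom m2=\binom{m+1}2$.
\item For $a=2$ the sum is $\binom m2$.
\item For $a\ge3$ the sum is $0$.
\end{itemize}
Empty cases are consistent with this: for example, when $m=1$ and $a=2$ the set $U_{m,a}$ is empty and $\binom12=0$.

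\textbf{Main obstacle.} The only point needing care is the local triviality in Step 1. The cheapest route is to bypass it with the torus argument: $T$ preserves each $\mathcal R_{m,q}$ and each $U_{m,a}$, since these are defined by rank conditions. Hence $\chi(U_{m,a})$ equals the number of $T$-fixed points $x_i^2$ and $x_ix_j$ of rank at least $a$. This gives \eqref{eq:Uchi} immediately.
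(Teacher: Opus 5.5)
Your proposal is correct and follows the paper's proof: the kernel map $\mathcal R_{m,q}\to\Gr(m-q,m)$ is a Zariski locally trivial fibration with fiber $Q_q$, and additivity over the rank strata together with Lemma~\ref{lem:Qchi} gives \eqref{eq:Uchi}. Your torus fixed-point shortcut, which counts the fixed points $x_i^2$ and $x_ix_j$ of rank at least $a$, is also valid because the weights $2e_i$ and $e_i+e_j$ are pairwise distinct, and it recovers the same numbers without appealing to local triviality.
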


\begin{proof}
A rank-$q$ symmetric form is determined by its kernel $K\in\Gr(m-q,m)$ and by a nondegenerate projective symmetric form on $\CC^m/K$. Thus $\mathcal R_{m,q}\longrightarrow \Gr(m-q,m)$ is a Zariski locally trivial fibration with fiber $Q_q$. Therefore
$$
\chi(\mathcal R_{m,q})=\chi(\Gr(m-q,m))\chi(Q_q)=\binom mq\chi(Q_q).
$$
Using Lemma~\ref{lem:Qchi},
$$
\chi(\mathcal R_{m,q})
=
\begin{cases}
m,&q=1,\\[1mm]
\binom m2,&q=2,\\[1mm]
0,&q\ge3.
\end{cases}
$$
Since $U_{m,a} = \bigsqcup_{q=a}^m\mathcal R_{m,q},$ additivity of Euler characteristic gives
\[
\chi(U_{m,a})
=
\sum_{q=a}^m\chi(\mathcal R_{m,q}),
\]
which is exactly \eqref{eq:Uchi}.
\end{proof}

\section{Proof of the main symmetric results}

\begin{proof}[Proof of Theorem \ref{thm:main}] 
By Proposition~\ref{prop:strata} and multiplicativity of Euler characteristic for Zariski locally trivial fibrations,
\begin{equation}\label{eq:stratumchi-general}
\chi(E_J^\circ\cap F)
=
\frac{n!}{m!\prod_{\ell=1}^td_\ell!}
\left(\prod_{\ell=1}^t\chi(Q_{d_\ell})\right)
\chi(U_{m,r-b+1}).
\end{equation}
On one hand, by Lemma~\ref{lem:Qchi}, $(\ref{eq:stratumchi-general})$ vanishes unless $
d_\ell\in\{1,2\} $ for every $\ell.$ On the other hand, Lemma~\ref{lem:rankchi}, implies that $(\ref{eq:stratumchi-general})$ vanishes unless $r-b+1\le2.$ Hence the only possibilities are $
b=r$ or $b=r-1$. Thus the nonzero strata are exactly those indexed by paths
$$
1=j_0<j_1<\cdots<j_t=b
$$
whose jumps are $1$ or $2$, and whose endpoint is $r$ or $r-1$.

Let $q(\mathbf j)=\#\{\ell:d_\ell=2\}.$ Then one has
$$
\prod_{\ell=1}^td_\ell!
=
2^{q(\mathbf j)}.
$$ since every $d_\ell$ is $1$ or $2$.

\subsection*{The case $b=r$}

Note that $m=n-r+1=k$ and $r-b+1=1.$ Hence
$$
\chi(U_{k,1})=\binom{k+1}{2}.
$$
Therefore
\begin{equation}\label{eq:chi-br}
\chi(E_J^\circ\cap F)
=
\binom{k+1}{2}\frac{n!}{k!}\,2^{-q(\mathbf j)}.
\end{equation}

\subsection*{The case $b=r-1$}

Likewise, $m=n-r+2=k+1$ and $r-b+1=2.$ Hence $\chi(U_{k+1,2})=\binom{k+1}{2}.$
Therefore
\begin{equation}\label{eq:chi-brminus}
\chi(E_J^\circ\cap F)
=
\binom{k+1}{2}\frac{n!}{(k+1)!}\,2^{-q(\mathbf j)}.
\end{equation}
All remaining strata have Euler characteristic zero.

Substitute \eqref{eq:chi-br} and \eqref{eq:chi-brminus} into the Denef--Loeser formula \eqref{eq:DL}. The terms with endpoint $b=r$ give
$$
\binom{k+1}{2}\frac{n!}{k!}
\sum_{\mathbf j\in\mathcal P_r}
\frac{2^{-q(\mathbf j)}}{\prod_{\ell=0}^{t}D_{j_\ell}(s)}.
$$
The terms with endpoint $b=r-1$ give
$$
\binom{k+1}{2}\frac{n!}{(k+1)!}
\sum_{\mathbf j\in\mathcal P_{r-1}}
\frac{2^{-q(\mathbf j)}}{\prod_{\ell=0}^{t}D_{j_\ell}(s)}.
$$
This proves \eqref{eq:pathformula}.

Define
$$
F_b(s)
:=
\sum_{\mathbf j\in\mathcal P_b}
\frac{2^{-q(\mathbf j)}}{\prod_{j\in\mathbf j}D_j(s)}.
$$
Every path ending at $b$ reaches $b$ either:
\begin{itemize}
\item from $b-1$ by a jump of length $1$, or
\item from $b-2$ by a jump of length $2$, which contributes the extra factor $1/2$.
\end{itemize}
Hence
\begin{equation}\label{eq:Frec}
F_b(s)
=
\frac{1}{D_b(s)}
\left(
F_{b-1}(s)+\frac12F_{b-2}(s)
\right).
\end{equation}
Also
$$
F_1(s)=\frac1{D_1(s)}.
$$

Write
$$
F_b(s)
=
\frac{R_b(s)}{D_1(s)\cdots D_b(s)}.
$$
Then \eqref{eq:Frec} becomes
$$
R_b(s)
=
R_{b-1}(s)+\frac{D_{b-1}(s)}2R_{b-2}(s),
$$
with $ R_0=0,$ and $ R_1=1.$
Thus $R_b$ is precisely the polynomial defined by \eqref{eq:Rinit}--\eqref{eq:Rrec}.

Therefore
$$
F_r(s)
=
\frac{R_r(s)}{\prod_{j=1}^rD_j(s)}
$$
and
$$
F_{r-1}(s)
=
\frac{R_{r-1}(s)}{\prod_{j=1}^{r-1}D_j(s)}
=
\frac{D_r(s)R_{r-1}(s)}{\prod_{j=1}^rD_j(s)}.
$$
Using
$$
\frac{n!}{(k+1)!}
=
\frac1{k+1}\frac{n!}{k!},
$$
the path formula becomes
$$
Z^{\mathrm{top}}_{(S_0,S_k),0}(s)
=
\binom{k+1}{2}\frac{n!}{k!}
\frac{
R_r(s)+\dfrac{D_r(s)}{k+1}R_{r-1}(s)
}{
\prod_{j=1}^rD_j(s)
}.
$$
This is exactly \eqref{eq:main}, completing the proof of Theorem~\ref{thm:main}. 
\end{proof}

The recurrence for $R_r(s)$ in fact admits an explicit expansion.

\begin{proposition}\label{prop:Rexplicit}
For $r\ge1$,
$$
R_r(s)
=
\sum_{\substack{
M\subseteq\{2,\dots,r-1\}\\
\text{no two elements of }M\text{ are consecutive}
}}
\frac1{2^{|M|}}
\prod_{j\in M}D_j(s).
$$
\end{proposition}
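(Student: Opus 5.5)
The plan is to prove the formula by strong induction on $r$, showing that the right-hand side, call it $T_r(s)$, satisfies the same initial values and the same recurrence \eqref{eq:Rrec} as $R_r(s)$. It is convenient to extend the definition to $r=0$ by reading the index set $\{2,\dots,r-1\}$ as empty, which gives $T_0=1$. Since $R_0=0$, we must then treat $r=1,2$ as base cases and run the recurrence only for $r\ge3$. Alternatively, one can note that $R_2=R_1+\tfrac{D_1}{2}R_0=1$.

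For the base cases, take $r=1$ and $r=2$. Then $\{2,\dots,r-1\}$ is empty, so the only admissible $M$ is $\varnothing$. This gives $T_1=T_2=1$, which matches $R_1=1$ and $R_2=1$.

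For the inductive step with $r\ge3$, we split the admissible sets $M\subseteq\{2,\dots,r-1\}$ according to whether $r-1\in M$.
\begin{itemize}
\item If $r-1\notin M$, then $M$ ranges exactly over the admissible subsets of $\{2,\dots,r-2\}$. These terms therefore sum to $T_{r-1}$.
\item If $r-1\in M$, the no-consecutive condition forces $r-2\notin M$. Writing $M=M'\sqcup\{r-1\}$, the set $M'$ ranges over the admissible subsets of $\{2,\dots,r-3\}$, and the weight factors as $\frac{1}{2^{|M|}}\prod_{j\in M}D_j=\frac{D_{r-1}}{2}\cdot\frac{1}{2^{|M'|}}\prod_{j\in M'}D_j$. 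These terms therefore sum to $\frac{D_{r-1}(s)}{2}\,T_{r-2}$.
\end{itemize}
Adding the two cases gives $T_r=T_{r-1}+\frac{D_{r-1}}{2}T_{r-2}$. This is exactly \eqref{eq:Rrec}, so $T_r=R_r$ for all $r\ge1$.

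The only delicate point is bookkeeping at the small indices, namely the $r=3$ step. There the second case uses $M'\subseteq\{2,\dots,0\}=\varnothing$, so the required value is $T_{1}=1$. This agrees with $R_1$, which is why the induction must start from $r=1,2$ rather than from $R_0=0$. The same identity can also be read off directly from the path description in the proof of Theorem~\ref{thm:main}: expanding the recurrence \eqref{eq:Rrec} step by step, each step of length two contributes a factor $D_{j}/2$ at its intermediate index. Recording the set of these indices gives a bijection between paths and non-consecutive subsets.
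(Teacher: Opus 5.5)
Your proof is correct, but it takes a different route from the paper's. You check directly that the right-hand side $T_r$ satisfies the defining recurrence \eqref{eq:Rrec}, by splitting admissible sets according to whether the top index $r-1$ lies in $M$. You also handle the mismatch $T_0=1\neq R_0=0$ correctly, by starting at $r=1,2$ and running the inductive step only for $r\ge 3$.

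The paper instead goes back to the path sum $F_r$ from the proof of Theorem~\ref{thm:main}. It writes $R_r=F_r\,D_1\cdots D_r$ as a sum over paths $1=j_0<\dots<j_t=r$ with jumps in $\{1,2\}$. It then builds a bijection between such paths and non-consecutive subsets $M\subseteq\{2,\dots,r-1\}$, sending a path to its set of skipped indices. Under this bijection $q(\mathbf j)=|M|$, and the factors surviving in $D_1\cdots D_r/\prod_{j\in\mathbf j}D_j$ are exactly the $D_j$ with $j\in M$. This is the observation in your closing sentences.

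Your argument is more self-contained: it uses only the definition of $R_r$, not the identification $F_b=R_b/(D_1\cdots D_b)$. It also makes the small-index bookkeeping explicit, which the paper's prose bijection leaves implicit. The paper's bijection, in turn, explains where the subsets come from: they record the length-two jumps, which survive in the zeta function because $\chi(Q_2)=1$.

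Your ``split on the largest element'' device is the one the paper itself uses later, to derive the recurrence \eqref{eq:terminal-recurrence} in the proof of Corollary~\ref{cor:main}.
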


\begin{proof} By construction,  \begin{align*}
R_r(s) &=F_r(s)D_1(s)\cdots D_r(s) =\sum_{\mathbf j\in\mathcal P_r}2^{-q(\mathbf j)}\frac{D_1(s)\cdots D_r(s)}{\prod_{j\in\mathbf j}D_j(s)} 
\end{align*}
Consider paths $1=j_0<j_1<\cdots<j_t=r$  whose successive jumps satisfy $ j_\ell-j_{\ell-1}\in\{1,2\}.$ Thus one moves from $1$ to $r$ by steps of length $1$ or $2$. To such a path, associate the set of skipped indices $$ M(\mathbf j) := \{2,\ldots,r-1\}\setminus\{j_0,\ldots,j_t\}. $$ An index is skipped precisely when the path makes a jump of length $2$. More precisely, if $j_{\ell}-j_{\ell-1}=2,$ then the unique skipped index is $ j_{\ell-1}+1.$ Two skipped indices can never be consecutive. Indeed, suppose that $a$ is skipped. Then the path must contain the jump $a-1\longrightarrow a+1.$ In particular, $a+1$ belongs to the path and hence cannot also be skipped. Therefore $M(\mathbf j)\subseteq\{2,\ldots,r-1\}$ contains no two consecutive integers. 

Conversely, let $$M\subseteq\{2,\ldots,r-1\} $$ be a subset containing no two consecutive integers. Then there is a unique path $$ 1=j_0<j_1<\cdots<j_t=r, \qquad j_\ell-j_{\ell-1}\in\{1,2\}, $$ whose skipped indices are exactly the elements of $M$. Namely, whenever $a\in M$, the path jumps from $a-1$ to $a+1$, while at every other stage it moves by a step of length $1$. The condition that $M$ contains no consecutive integers guarantees that these jumps are compatible. Hence we obtain a bijection $$ \left\{ \begin{array}{c} \text{paths }1=j_0<\cdots<j_t=r\\ \text{with }j_\ell-j_{\ell-1}\in\{1,2\} \end{array} \right\} \longleftrightarrow \left\{ \begin{array}{c} M\subseteq\{2,\ldots,r-1\}\\ \text{with no two consecutive elements} \end{array} \right\}. $$ In particular, if $q(\mathbf j)$ denotes the number of jumps of length $2$, then $ q(\mathbf j)=|M(\mathbf j)|.$  Moreover,  every skipped vertex $j \in M(\mathbf{j})$ contributes a factor $D_j$ surviving after cancellation in the second factor $$\frac{D_1(s)\cdots D_r(s)}{\prod_{j\in\mathbf j}D_j(s)}.$$ Summing over all such subsets gives the desired expression.
\end{proof}

\begin{proof}[Proof of Corollary \ref{cor:main}]
For $k=1$, we have $r=n$.  From now on we use notations $D_j^{(n)}$ and $R_j^{(n)}$ to record the fact that the polynomials $D_j$ and $R_j$ depend on $n$. Since
$$
D_j^{(n)}(s)
=
(n-j+1)
\left(
s+\frac{n-j+2}{2}
\right),
$$
putting $
i=n-j+1$
gives
$$
\prod_{j=1}^nD_j^{(n)}(s)
=
n!
\prod_{i=1}^n
\left(
s+\frac{i+1}{2}
\right).
$$ 
Hence, the general formula for the topological zeta function is simplified to
\begin{equation}\label{eq:Z-R-simplified}
Z^{\mathrm{top}}_{(S_0,S_1),0}(s)
=
\frac{R_{n+1}^{(n)}(s)}
{\displaystyle
\prod_{i=1}^n
\left(
s+\frac{i+1}{2}
\right)}.
\end{equation} where 
$$
R_{n+1}^{(n)}(s)
=
R_n^{(n)}(s)
+
\frac{D_n^{(n)}(s)}{2}
R_{n-1}^{(n)}(s).
$$ obtained by the recurrence by one step further.

The problem now is reduced to computing $R_{n+1}^{(n)}(s)$ explicitly. By Proposition \ref{prop:Rexplicit}, we have
$$
R_{n+1}^{(n)}(s)
=
\sum_{\substack{
M\subseteq\{2,\ldots,n\}\\
\text{no two elements of $M$ are consecutive}
}}
\prod_{j\in M}
\frac{D_j^{(n)}(s)}{2}.
$$
Put $i=n-j+1.$
Then
$$
j\in\{2,\ldots,n\}
\qquad\Longleftrightarrow\qquad
i\in\{1,\ldots,n-1\},
$$
and
$$
D_j^{(n)}(s)
=
i\left(
s+\frac{i+1}{2}
\right).
$$
Moreover, the property that no two indices are consecutive is preserved by the change of variables $i=n-j+1$. Consequently,
\begin{equation}\label{eq:R-reindexed}
R_{n+1}^{(n)}(s)
=
\sum_{\substack{
M\subseteq\{1,\ldots,n-1\}\\
\text{no two elements of $M$ are consecutive}
}}
\prod_{i\in M}
\frac{i}{2}
\left(
s+\frac{i+1}{2}
\right).
\end{equation}
Let $M$ be such an admissible set. On one hand, if $n-1\notin M,$ then $
M\subseteq\{1,\ldots,n-2\},$ and the corresponding sum is precisely $R_n^{(n-1)}(s).$ On the other hand, if
$n-1\in M,$ then necessarily $n-2\notin M.$ After removing the element $n-1$, the remaining subset is an admissible subset of $\{1,\ldots,n-3\}.$
Hence the corresponding contribution is
$$
\frac{n-1}{2}
\left(
s+\frac n2
\right)
R_{n-1}^{(n-2)}(s).
$$
Thus we obtain the recurrence
\begin{equation}\label{eq:terminal-recurrence}
R_{n+1}^{(n)}(s)
=
R_n^{(n-1)}(s)
+
\frac{n-1}{2}
\left(
s+\frac n2
\right)
R_{n-1}^{(n-2)}(s).
\end{equation}
\textbf{Claim}: For every $n=2m+1$ with $m\geq0$,
\begin{equation}\label{eq:R-odd}
R_{2m+2}^{(2m+1)}(s)
=
\frac{(2m+1)!!}{2^m}
\prod_{\ell=2}^{m+1}(s+\ell),
\end{equation}
and, for every $n=2m$ with $m\geq1$,
\begin{equation}\label{eq:R-even}
R_{2m+1}^{(2m)}(s)
=
\frac{(2m-1)!!}{2^m}
(s+2m+1)
\prod_{\ell=2}^{m}(s+\ell).
\end{equation}
Indeed, we shall prove these two identities simultaneously by induction. For $n=1$, one has $R_2^{(1)}(s)=1,$ which agrees with \eqref{eq:R-odd} for $m=0$. For $n=2$,
$$
R_3^{(2)}(s)
=
R_2^{(2)}(s)
+
\frac{D_2^{(2)}(s)}2R_1^{(2)}(s).
$$
Since $ R_2^{(2)}=1$ and $ D_2^{(2)}(s)=s+1,$
we get
$$
R_3^{(2)}(s)
=
1+\frac{s+1}{2}
=
\frac{s+3}{2},
$$
which agrees with \eqref{eq:R-even} for $m=1$.

Assume now that \eqref{eq:R-even} holds in dimension $2m$ and that \eqref{eq:R-odd} holds in dimension $2m-1$. Applying \eqref{eq:terminal-recurrence} with $n=2m+1$, we obtain
 \begin{align*}
R_{2m+2}^{(2m+1)} &= R_{2m+1}^{(2m)}+m\left(s+m+\frac12\right)R_{2m}^{(2m-1)}\\
  &=\frac{(2m-1)!!}{2^m}(s+2m+1)\prod_{\ell=2}^{m}(s+\ell) +m\left(s+m+\frac12\right)\frac{(2m-1)!!}{2^{m-1}}
\prod_{\ell=2}^{m}(s+\ell) \\
  &=\left[\frac{(2m-1)!!}{2^m}\prod_{\ell=2}^{m}(s+\ell) \right] \left[
s+2m+1+2ms+2m^2+m
\right]\\
  &= \left[\frac{(2m-1)!!}{2^m}\prod_{\ell=2}^{m}(s+\ell) \right] (2m+1)(s+m+1)\\
  &= \frac{(2m+1)!!}{2^m}
\prod_{\ell=2}^{m+1}(s+\ell)
\end{align*}
where the second line follows by the induction hypothesis. Hence, \eqref{eq:R-odd} is proved.

Next apply \eqref{eq:terminal-recurrence} with $n=2m+2$:
$$
R_{2m+3}^{(2m+2)}
=
R_{2m+2}^{(2m+1)}
+
\frac{2m+1}{2}(s+m+1)
R_{2m+1}^{(2m)}.
$$
By the formulas already proved, $$
R_{2m+2}^{(2m+1)}
=
\frac{(2m+1)!!}{2^m}
\prod_{\ell=2}^{m+1}(s+\ell),
$$ and
$$
R_{2m+1}^{(2m)}
=
\frac{(2m-1)!!}{2^m}
(s+2m+1)
\prod_{\ell=2}^{m}(s+\ell), 
$$ the same kind of computations shows \eqref{eq:R-even} with $m$ replaced by $m+1$. This completes the induction.

Finally, substituting the formulas \eqref{eq:R-odd} and \eqref{eq:R-even} for $R_{n+1}^{(n)}$ into \eqref{eq:Z-R-simplified} gives the desired local topological zeta function. By \cite[Ex. 4.1.9]{Lo16}, the Bernstein-Sato polynomial is given by $$\prod_{i=1}^n\left( s+\frac{i+1}{2} \right).$$ The last statement is now obvious. This finishes the proof.
\end{proof}
\section{The generic catalecticant determinantal case}
\label{sec:cat}
In this section, a generic catalecticant matrix means a catalecticant of binary forms, or equivalently a generic Hankel matrix.  We allow rectangular matrices. 


\subsection{Simultaneous catalecticant reduction}

The following simultaneous reduction is based on the triangular matrix transformation established by Brogan in the proof of \cite[Lemma~3.1]{Bro26}. More precisely, the matrix identity below, in the square case, is the case $k=0$ of Brogan's construction. By considering suitable rectangular submatrices, we obtain the corresponding identity for arbitrary
generic Hankel matrices. We record explicitly its scheme-theoretic consequences for all determinantal ideals, which will be essential for the log-principalization constructed below. For completeness, we give a direct proof of the matrix identity using generating functions. 

Assume $m,n\geq2$, and denote $R:=\C[x_0,x_0^{-1},x_1,\ldots,x_d].$ Set $x_k=0$ for $k>d$, and define
$$ X(T):=\sum_{k=0}^{d}x_kT^k, \;\;\; P(T):=X(T)^{-1}=\sum_{k\geq0}p_kT^k\in R[[T]].$$ For $a\geq1$, let
$P_a:=(p_{j-i})_{0\leq i,j\leq a-1},\; p_k:=0\;(k<0),$ and $z_k:=-p_{k+2}$ for $0\leq k\leq d-2$. 

\begin{lemma}
\label{lem:cat-reciprocal} With the above settings, we have the following.
\begin{enumerate}[label=\textup{(\roman*)}]
\item The functions $p_0,p_1,z_0,\ldots,z_{d-2}$ are algebraic coordinates on $\{x_0\neq0\}$, with $p_0$ invertible.  Thus, $$
\{x_0\neq0\}\simeq\C^*\times\A^1\times\A^{d-1}.
$$
\item The matrices $P_m$ and $P_n$ are invertible over $R$, and
\begin{equation}
\label{eq:cat-block-identity}
P_m^{\mathrm t}H_{m,n}(x)P_n=
\begin{pmatrix}p_0&0\\0&H_{m-1,n-1}(z)
\end{pmatrix}.
\end{equation}
\item With the convention $I_0(B)=(1)$ for any matrix $B$, one has,
simultaneously for $1\leq q\leq\min\{m,n\}$,
\begin{equation}
\label{eq:cat-open-minors}
I_q(H_{m,n})R=I_{q-1}(H_{m-1,n-1}(z))R.
\end{equation}
\end{enumerate}
\end{lemma}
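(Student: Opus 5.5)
The plan is to treat $P(T)=X(T)^{-1}$ as a change of coordinates and to verify the matrix identity \eqref{eq:cat-block-identity} by a bivariate generating-function computation. Part (iii) will then follow formally, exactly as in Lemma~\ref{lem:simultaneous-local-reduction}(i).

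For (i), I expand $X(T)P(T)=1$ coefficientwise. This gives $p_0=x_0^{-1}$ and, for $k\geq1$,
$$p_k=-x_0^{-1}\sum_{i=1}^{k}x_ip_{k-i}.$$
By induction, $p_k=-x_0^{-2}x_k+(\text{polynomial in }x_0^{-1},x_1,\ldots,x_{k-1})$. Symmetrically, since $X=P^{-1}$, we get $x_0=p_0^{-1}$ and $x_k=-p_0^{-2}p_k+(\text{polynomial in }p_0^{-1},p_1,\ldots,p_{k-1})$. So $(x_0,\ldots,x_d)\mapsto(p_0,\ldots,p_d)$ is a triangular automorphism of $\{x_0\neq0\}\simeq\{p_0\neq0\}$. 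Since $(z_0,\ldots,z_{d-2})=-(p_2,\ldots,p_d)$, the coordinates are $p_0\in\C^*$, $p_1\in\A^1$ and $z\in\A^{d-1}$. (The coefficients $p_k$ with $k>d$ are polynomial in these and play no role.)

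For (ii), note that $P_a$ is upper-triangular Toeplitz with diagonal entry $p_0$, a unit of $R$, so $P_a$ is invertible over $R$. The $(i,j)$ entry of $P_m^{\mathrm t}HP_n$ is
$$\sum_{a\le i,\,b\le j}p_{i-a}\,x_{a+b}\,p_{j-b}.$$
This is the coefficient of $S^iT^j$ in
$$P(S)P(T)\sum_{a,b\geq0}x_{a+b}S^aT^b=P(S)P(T)\,\frac{SX(S)-TX(T)}{S-T}.$$
The truncation is harmless because only $a+b\le (m-1)+(n-1)=d$ occurs for $i\le m-1$, $j\le n-1$. Using $X=P^{-1}$, the right-hand side equals $\frac{SP(T)-TP(S)}{S-T}=\sum_k p_k\,\frac{ST^k-TS^k}{S-T}$. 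Term by term:
\begin{itemize}
\item $k=0$ gives $1$, contributing $p_0$ at position $(0,0)$;
\item $k=1$ gives $0$;
\item $k\geq2$ gives $-ST\sum_{a+b=k-2}S^aT^b$.
\end{itemize}
Hence the entries in row $0$ and column $0$ vanish except for $p_0$ at $(0,0)$. For $i,j\geq1$ the entry is $-p_{i+j}=z_{i+j-2}$, which is exactly the $(i-1,j-1)$ entry of $H_{m-1,n-1}(z)$. The indices stay within range, since $i+j-2\le d-2$. This step, namely checking the generating-function identity and that the truncations are legitimate, is the main point. It is still routine once the closed form $\frac{SX(S)-TX(T)}{S-T}$ of the Hankel generating function is written down.

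For (iii), minor ideals are invariant under multiplication by invertible matrices, so $I_q(H_{m,n})R=I_q\bigl(\operatorname{diag}(p_0,H_{m-1,n-1}(z))\bigr)$. The $q\times q$ minors of a block-diagonal matrix with $1\times1$ block $p_0$ are either $q\times q$ minors of $H_{m-1,n-1}(z)$, or $p_0$ times $(q-1)\times(q-1)$ minors of it, or zero. Since $p_0$ is a unit, the ideal they generate is $I_q(H_{m-1,n-1}(z))+I_{q-1}(H_{m-1,n-1}(z))=I_{q-1}(H_{m-1,n-1}(z))$. Here $I_q\subseteq I_{q-1}$ by Laplace expansion, and the convention $I_0=(1)$ covers $q=1$. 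This holds for all $q$ at once, which proves \eqref{eq:cat-open-minors}.
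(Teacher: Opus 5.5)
Your proposal is correct and follows the paper's proof essentially step for step. You use the triangular reciprocal recursion $X(T)P(T)=1$ for (i), and for (ii) you multiply the Hankel generating function $\frac{SX(S)-TX(T)}{S-T}$ by $P(S)P(T)$ and read off coefficients. Your explicit block-diagonal minor computation for (iii) just fills in the step the paper dismisses as following directly from (ii).
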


\begin{proof}
The reciprocal coefficients satisfy
\begin{equation}
\label{eq:cat-reciprocal-recursion}
p_0=x_0^{-1},
\qquad
p_k=-x_0^{-1}\sum_{i=1}^{\min\{k,d\}}x_i p_{k-i}
\quad(k\geq1).
\end{equation}
For $1\leq k\leq d$, this gives $p_k=-x_0^{-2}x_k+\text{an element of }\C[x_0,x_0^{-1},x_1,\ldots,x_{k-1}].$
 The transformation $(x_0,\ldots,x_d)\longmapsto(p_0,\ldots,p_d)$ is an algebraic isomorphism on the open set $x_0\neq 0$.
Indeed, since $X(T)=P(T)^{-1}$, its inverse is given recursively by
$$x_0=p_0^{-1},\; x_k=-p_0^{-1}\sum_{i=1}^{k}p_i x_{k-i},\;1\leq k\leq d.
$$
In particular, $\mathbb C[x_0,x_0^{-1},x_1,\ldots,x_d]=\mathbb C[p_0,p_0^{-1},p_1,\ldots,p_d].$  This proves (i).  Also $P_a$ is upper triangular with diagonal entries $p_0$, so $\det P_a=p_0^a$ is a unit.

We now prove \eqref{eq:cat-block-identity}.  Introducing two independent formal variables $u,v$ we have
\begin{equation}\label{eq:cat-hankel-generating-function}
\sum_{a,b\geq0}x_{a+b}u^av^b =\sum_{k=0}^{d}x_k\sum_{a=0}^{k}u^av^{k-a}\notag=\frac{uX(u)-vX(v)}{u-v}.
\end{equation}
so that the quotient on the right is a polynomial.  Multiplying by $P(u)P(v)$ and
using $P(T)X(T)=1$, we obtain
\begin{equation}
\label{eq:cat-transformed-generating-function}
P(u)P(v)\sum_{a,b\geq0}x_{a+b}u^av^b=\frac{uP(v)-vP(u)}{u-v}.
\end{equation}
On one hand, in the right hand side of \eqref{eq:cat-transformed-generating-function} the terms involving $p_1$ cancel, and, for $k\geq2$,
\[
\frac{uv^k-vu^k}{u-v}
=-\sum_{i=1}^{k-1}u^iv^{k-i}.
\]
Consequently, as an identity in $R[[u,v]]$,
\begin{equation}
\label{eq:cat-coefficient-identity}
\frac{uP(v)-vP(u)}{u-v}
=p_0-\sum_{i,j\geq1}p_{i+j}u^iv^j.
\end{equation}
On the other hand, for $0\leq i\leq m-1$ and $0\leq j\leq n-1$, the coefficient of $u^iv^j$ on the left hand side of \eqref{eq:cat-transformed-generating-function} is
$$
\sum_{a=0}^{i}\sum_{b=0}^{j}p_{i-a}x_{a+b}p_{j-b}
=(P_m^{\mathrm t}H_{m,n}P_n)_{ij}.
$$
Therefore, by \eqref{eq:cat-coefficient-identity}, one has

\begin{equation*}
(P_m^{\mathrm t}H_{m,n}P_n)_{ij} =
\begin{cases}
p_0,&\text{ if }i=j=0,\\[2mm]
0,&\text{ if either } i=0 \text{ and } j\geq 1 \text{ or } j=0 \text{ and } i\geq 1 ,\\[2mm]
-p_{i+j},& \text{ otherwise},
\end{cases}
\end{equation*} or equivalently $$
P_m^{\mathrm t}H_{m,n}P_n=
\begin{pmatrix}
p_0 & 0 & 0 & \cdots & 0\\
0 & -p_2 & -p_3 & \cdots & -p_n\\
0 & -p_3 & -p_4 & \cdots & -p_{n+1}\\
\vdots & \vdots & \vdots & \ddots & \vdots\\
0 & -p_m & -p_{m+1} & \cdots & -p_{m+n-2}
\end{pmatrix}=\begin{pmatrix}
p_0&0\\
0&H_{m-1,n-1}(z).
\end{pmatrix}
$$
This proves (ii) and also explains the appearance of the factor $\mathbb{A}^1$ in (i).

Finally, (iii) follows directly from (ii).
\end{proof}

For clarity, the ordinary Schur complement is not a substitute for
Lemma~\ref{lem:cat-reciprocal}.  For a $4\times4$ Hankel matrix, eliminating
the first row and column gives entries
$$B_{13}=x_4-\frac{x_1x_3}{x_0},\; B_{22}=x_4-\frac{x_2^2}{x_0},
$$
which need not agree.  The residual matrix obtained that way is not
generally Hankel.

\begin{lemma}
\label{lem:cat-coverage}
The natural $\GL_2(\C)$-action on $S_0$ preserves every ideal $I_q$.
The translates of $\{x_0\neq0\}$ cover $S_0\setminus\{0\}$, and the
translates of its projective chart cover $\PP^d$.  Likewise, the translates
of the $x_0$-direction chart cover $\Bl_0(S_0)$.
\end{lemma}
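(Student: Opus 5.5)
The plan is to realize the $\GL_2(\C)$-action concretely and then reduce every covering statement to a statement about the rational normal curve. Identify $S_0=\A^{d+1}$ with the space of binary forms of degree $d$, $F(u,v)=\sum_k\binom dk x_k u^{d-k}v^k$, or equivalently with the dual space $\Sym^d(\C^2)^\vee$. The action of $g\in\GL_2(\C)$ is by linear substitution of the variables. This action is linear on $S_0$, so it lifts to $\PP^d$ and to $\Bl_0(S_0)$, since the origin is fixed.

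First we show that each $I_q$ is preserved. The catalecticant $H_{m,n}(x)$ is the matrix of the apolarity map $\Sym^{m-1}\C^2\to(\Sym^{n-1}\C^2)^\vee$, obtained by contracting with $F$. Under a substitution $g$ this map changes by $H\mapsto A_g^{\mathrm t}HB_g$. Here $A_g$ and $B_g$ are the invertible matrices of the induced actions on $\Sym^{m-1}$ and $\Sym^{n-1}$; one can check this on generating functions as in the proof of Lemma~\ref{lem:cat-reciprocal}. Multiplying on both sides by invertible matrices does not change any ideal of minors. Hence $g^*I_q=I_q$ for every $q$.

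For the covering statements, $x_0$ is, up to a nonzero scalar, the evaluation of the form at a point of $\PP^1$: with the apolar pairing, $x_0=F(1,0)$ up to normalization. So the translate $g\cdot\{x_0\ne0\}$ is $\{F(g^{-1}p_0)\ne0\}$, and the union of all translates is the set of $F$ that do not vanish at every point of $\PP^1$. That set is exactly $S_0\setminus\{0\}$. The projective statement is the same argument applied to $[F]\in\PP^d$. If one prefers the Hankel-moment picture, where $x_k$ is paired with the curve point $(1,t,\dots,t^d)$, we replace evaluation by the pairing with a point of the curve. The fact we need is then that a nonzero linear form cannot vanish on the whole rational normal curve, because the curve spans $\PP^d$.

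For $\Bl_0(S_0)$, the $x_0$-direction chart is $\{(x,[\xi])\colon \xi_0\neq 0\}$, where $x\in[\xi]$. Points off the exceptional divisor are covered by the previous step. A point of $E\simeq\PP^d$ is a direction $[\xi]$, and by the projective case some translate has $\xi_0\ne0$ there. Equivariance of the blow-up then gives the claim. The main obstacle is bookkeeping the normalization of the coordinates, that is, whether the $x_k$ carry binomial factors. This matters for checking that the transformation really has the form $A^{\mathrm t}HB$ with $A$ and $B$ invertible. I would verify it first via the identity $\sum_{a,b}x_{a+b}u^av^b$ under the substitution.
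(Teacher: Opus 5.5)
Your proposal is correct and takes essentially the same route as the paper. The paper identifies $S_0$ with $(\Sym^d W)^\vee$ via $\lambda_x(e_0^{d-k}e_1^k)=x_k$, so that $H_{m,n}(x)$ is the matrix of the multiplication pairing $(f,g)\mapsto\lambda_x(fg)$ on $\Sym^{m-1}W\times\Sym^{n-1}W$ in monomial bases; your $F(a,b)$ is exactly its polynomial $\lambda_x((ae_0+be_1)^d)$, and the fact that this polynomial has a non-root gives all three covering statements. Using this multiplication pairing instead of the apolarity map makes the $A^{\mathrm t}HB$ transformation law immediate. It also removes the binomial-normalization bookkeeping you flagged as the main obstacle.
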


\begin{proof}
Let $W=\C^2$ with basis $e_0,e_1$.  Identify $S_0=(\Sym^d W)^\vee$ by associating to $x$ the functional $\lambda_x$
with $\lambda_x(e_0^{d-k}e_1^k)=x_k.$ The matrix $H_{m,n}(x)$ represents the multiplication pairing
$$\Sym^{m-1}W\times\Sym^{n-1}W\longrightarrow\C, \;(f,g)\longmapsto\lambda_x(fg).
$$
A change of basis of $W$ induces invertible changes of basis in the two factors.  Thus, its action on this matrix preserves all the ideals $I_q$.

For a nonzero functional $\lambda_x$, the polynomial
$$
\lambda_x((ae_0+be_1)^d)=\sum_{k=0}^{d}\binom dk x_k a^{d-k}b^k
$$
is not identically zero.  Hence some nonzero $\ell\in W$ satisfies $\lambda_x(\ell^d)\neq0$.  Choosing $\ell$ as the first basis vector places $x$ in a chart with first coordinate nonzero.  This proves the assertions on the affine and projective spaces.

The $\mathrm{GL}(W)$-action lifts to the blow-up because it preserves the ideal of the origin.  Every point of this blow-up has a projective direction $[\lambda]\in\PP((\Sym^dW)^\vee)$.  Applying the same argument to this direction places the point in an $x_0$-direction chart.  Since all the ideals are invariant, the $\mathrm{GL}(W)$-action also lifts to the subsequent blow-ups of their strict transforms.
\end{proof}

To indicate matrix sizes when necessary, we write $S_0^{m',n'}:=\A^{m'+n'-1}$ and $Y_q^{m',n'}:=V(I_q(H_{m',n'}))$.

\begin{lemma}
\label{lem:cat-local-reduction}
Assume $m,n\geq2$.
\begin{enumerate}[label=\textup{(\roman*)}]
\item Let $x\in S_0$ have rank $a$, where $1\leq a<\min\{m,n\}$. There is a Zariski neighbourhood $U$ of $x$ and an isomorphism
$$
U\simeq U_0\times S_0^{m-a,n-a},\;\;\;\dim U_0=2a,$$
with $U_0$ smooth and $x$ mapping to $(u_0,0)$, such that, for every $1\leq q\leq\min\{m,n\}$,
\begin{equation*}
\label{eq:cat-positive-rank-reduction}
I_q\cO_U=
\begin{cases}
\cO_U,&q\leq a,\\
I_{q-a}(H_{m-a,n-a})\cO_U,&q>a.
\end{cases}
\end{equation*}
\item Let $\sigma:\Bl_0(S_0)\longrightarrow S_0$.  Around every point there is a chart with coordinates $
t,\ p_1,\ z_0,\ldots,z_{d-2}$ on which the first exceptional divisor is $(t=0)$ and, simultaneously
for $1\leq q\leq\min\{m,n\}$,
\begin{equation*}
\label{eq:cat-blowup-minors}
I_q\cO=t^q I_{q-1}(H_{m-1,n-1}(z))\cO.
\end{equation*}
For $q\geq2$, the strict transform of $Y_q$ on this chart is therefore
\begin{equation*}
\label{eq:cat-strict-transform}
\A^1_t\times\A^1_{p_1}\times Y_{q-1}^{m-1,n-1}.
\end{equation*}
\item At a projective point of exact rank $1\leq a<\min\{m,n\}$, there is a
Zariski neighbourhood
\[
U\simeq U_0\times S_0^{m-a,n-a},
\qquad \dim U_0=2a-1,
\]
with $U_0$ smooth, on which the dehomogenized determinantal ideals satisfy
\eqref{eq:cat-positive-rank-reduction}.  The exact-rank-$a$ stratum in this
neighbourhood is $U_0\times\{0\}$.
\end{enumerate}
All these identifications preserve the whole chain of determinantal ideals and hence the subsequent blow-up centers. 
\end{lemma}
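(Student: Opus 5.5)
The plan is to build everything out of Lemma~\ref{lem:cat-reciprocal}, combined with the $\GL_2$-equivariance of Lemma~\ref{lem:cat-coverage}. All the identifications will be explicit coordinate changes on standard charts. Each of them transforms every ideal $I_q$ simultaneously, so the final assertion about preserving the whole chain of ideals and the later blow-up centers will come for free: the strict transform of an intersection of centers is computed chartwise, and blow-ups commute with products by a smooth factor.

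\textbf{Part (ii).} By Lemma~\ref{lem:cat-coverage}, every point of $\Bl_0(S_0)$ can be moved by $\GL(W)$ into the $x_0$-direction chart. On that chart write $x_0=t$ and $x_k=t\,y_k$ for $k\ge1$, so that $H_{m,n}(x)=t\,H_{m,n}(1,y_1,\dots,y_d)$. Lemma~\ref{lem:cat-reciprocal}, applied to the series $1+y_1T+\cdots$ (for which $x_0=1$ and $p_0=1$), gives coordinates $p_1,z_0,\dots,z_{d-2}$ on $\A^d_y$ with
\[
I_q(H_{m,n}(1,y))=I_{q-1}(H_{m-1,n-1}(z)).
\]
The factor $t$ scales each $q\times q$ minor by $t^q$, so $I_q\cO=t^qI_{q-1}(H_{m-1,n-1}(z))$ for all $q$ simultaneously. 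For $q\ge2$ the residual ideal does not involve $t$, and since it is prime its zero locus is not contained in $(t=0)$. Hence it defines the strict transform, which is $\A^1_t\times\A^1_{p_1}\times Y^{m-1,n-1}_{q-1}$.

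\textbf{Part (i).} I will induct on $a$. Using Lemma~\ref{lem:cat-coverage}, assume $x_0(x)\ne0$. Lemma~\ref{lem:cat-reciprocal} then gives
\[
\{x_0\neq0\}\simeq\C^*_{p_0}\times\A^1_{p_1}\times S_0^{m-1,n-1},\qquad I_q=I_{q-1}(H_{m-1,n-1}(z)).
\]
The point $x$ corresponds to a point $z(x)$ of rank $a-1$.
\begin{itemize}
\item If $a=1$, then $z(x)=0$, and we take $U_0=\C^*\times\A^1$, which has dimension $2$.
\item If $a\ge2$, then $a-1<\min\{m-1,n-1\}$ and $m-1,n-1\ge2$ (the latter because $a<\min\{m,n\}$). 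Applying the induction hypothesis to $z(x)$ in $S_0^{m-1,n-1}$ gives a neighbourhood $U'_0\times S_0^{m-a,n-a}$ with $\dim U'_0=2(a-1)$. We take $U_0=\C^*\times\A^1\times U'_0$, which has dimension $2a$.
\end{itemize}
The index shifts compose correctly: $q\mapsto q-1\mapsto q-a$, and $I_{q}=\cO$ for $q\le a$ via the convention $I_0=(1)$.

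\textbf{Part (iii).} I will run the same induction on the affine chart $x_0=1$ of $\PP^d$, again placing the point there by Lemma~\ref{lem:cat-coverage}. On this chart $p_0=1$, so the first step contributes only $\A^1_{p_1}$. The remaining steps are affine applications of part (i), each contributing two dimensions. This gives $\dim U_0=1+2(a-1)=2a-1$. Inside this neighbourhood, the exact-rank-$a$ stratum is where $I_{a+1}$ vanishes but $I_a$ does not; after reduction this is where $I_1(H_{m-a,n-a})$ vanishes, namely $U_0\times\{0\}$.

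\textbf{Main obstacle.} The delicate point is making sure the induction hypothesis genuinely applies at each step. The residual point $z(x)$ need not have $z_0\neq0$, so at every stage we must re-invoke Lemma~\ref{lem:cat-coverage} for the smaller Hankel space $S_0^{m-1,n-1}$ and use its own $\GL_2$-action. A further care point is the degenerate end case where $m-a$ or $n-a$ equals $1$, in which the Hankel ``matrix'' is a single row. There I would check directly that the rank-$\le0$ locus is the origin, which it is since all entries are coordinates, so the statement still holds.
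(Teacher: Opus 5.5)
Your proposal is correct and follows essentially the same route as the paper: you combine Lemma~\ref{lem:cat-reciprocal} on the $x_0$-chart with the $\GL_2$-covering of Lemma~\ref{lem:cat-coverage}, iterated $a$ times for (i) (you make it a formal induction on $a$, where the paper says ``repeat''). For (ii) you use the normalized chart $x_0=t$, $x_k=tu_k$ with $p_0=1$, and for (iii) a first projective step contributing only $\A^1_{p_1}$ followed by affine reductions, exactly as the paper does; your extra checks (that $m-1,n-1\geq 2$ at each inductive step, and that the residual ideal cuts out the strict transform because it does not involve $t$) fill in details the paper leaves implicit.
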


\begin{proof}
For (i), by Lemma~\ref{lem:cat-coverage} we can arrange $x_0\neq0$. Lemma~\ref{lem:cat-reciprocal} implies that $U:=\lbrace x_0  \neq 0\rbrace\cong \C^*\times\A^1 \times S_0^{m-1,n-1}$ and that one can replace the matrix $H_{m,n}(x)$ by
$$\begin{pmatrix}p_0&0\\0&H_{m-1,n-1}(z)
\end{pmatrix}.$$
 where $H_{m-1,n-1}(z)$ is the smaller generic Hankel matrix.  The residual matrix at the chosen point has rank $a-1$.  If this rank is positive, apply Lemma~\ref{lem:cat-coverage} to the residual binary-form space and repeat. After $a$ such steps the residual matrix has rank zero. Since each step adds two smooth coordinates, the remaining number of coordinates is $d+1-2a$.  Now, one may take $U_0\simeq(\C^*\times\A^1)^a$ and then  (i) follows from Lemma Lemma~\ref{lem:cat-reciprocal}.(iii).  

For (ii), on the $x_0$-direction chart, we write $x_0=t,\; x_k=tu_k\ (1\leq k\leq d),\; u_0=1.$ Then $\sigma^*H_{m,n}=tH_{m,n}(1,u_1,\ldots,u_d)$. So, we can apply Lemma~\ref{lem:cat-reciprocal} to the normalized matrix, whose reciprocal coefficient $p_0$ is $1$ and then the claims follows. 

For (iii), we first normalize $x_0=1$ on a projective chart.  The first reduction gives rise to one smooth coordinate, namely $p_1$, rather than two. The subsequent $a-1$ reductions are the affine reductions of (i), each giving rise to two smooth coordinates.  Thus, the smooth factor $U_0$ has dimension $1+2(a-1)=2a-1$.  The rank is $a$ plus the rank of the final residual matrix, so the exact-rank-$a$ stratum is precisely $U_0\times\{0\}$. 

Finally, at each step the same invertible matrices work for every minor size. Thus, the last claim follows.
\end{proof}

\subsection{Principalization and numerical data}
As a particular case of \cite[Corollary 2.4]{Ber92}, the determinantal variety $Y_r$ admits an embedded resolution obtained by successively blowing up the strict transforms of $Y_1,\ Y_2,\ \ldots,\ Y_{r-1}.$ More precisely, we set  $X_0:=S_0$.  For $1\leq j\leq r-1$, let $\pi_j:X_j\longrightarrow X_{j-1}$ be the blow-up of the strict transform
$Y_j^{(j-1)}$ of $Y_j$. Then $Y_r^{(r-1)}$ is smooth and meets the exceptional divisor with simple normal crossings. Let $ \pi^{(r-1)} := \pi_1\circ\cdots\circ\pi_{r-1} : X_{r-1}\longrightarrow S_n$ be the resulting morphism, and let $E_j\subset X_{r-1}$ denote the final strict transform of the exceptional divisor created by
$\pi_j$. Here, we give an alternative direct proof of this fact in the catalecticant setting, based on the simultaneous local reduction established above. This argument also provides the scheme-theoretic factorization of the determinantal ideal required for log-principalization.
\begin{proposition}
\label{prop:cat-factorization}
Every $X_j$ is smooth, the successive centers are smooth, and the strict transform $Y_r^{(r-1)}\subset X_{r-1}$ is smooth and transverse to every stratum of the exceptional divisor.  This exceptional divisor has simple normal crossings.  Moreover,
\begin{equation}
\label{eq:cat-factorization}
I_r\cO_{X_{r-1}} = \cO_{X_{r-1}}\left(-\sum_{j=1}^{r-1}(r-j+1)E_j\right) \mathcal I_{Y_r^{(r-1)}}.
\end{equation}
\end{proposition}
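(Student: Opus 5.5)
We argue by induction on $r$, mirroring the proof of Proposition~\ref{prop:factorizing-GS} but with Lemma~\ref{lem:cat-local-reduction} in place of Lemma~\ref{lem:simultaneous-local-reduction}. The induction hypothesis covers every smaller pair $(m',n',r')$ with $r'<r$: the statement for $r'$ includes smoothness of the tower $X_0,\dots,X_{r'-1}$, smoothness of the centers, the simple normal crossings property, transversality of $Y_{r'}^{(r'-1)}$, and the factorization \eqref{eq:cat-factorization}. For $r=1$ nothing is blown up and $I_1=\mathcal I_{Y_1}$ is the maximal ideal of the origin, so all claims are trivial. If $\min\{m,n\}=1$ then $r=1$, so we may assume $m,n\ge2$, as Lemma~\ref{lem:cat-local-reduction} requires. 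All assertions are local on $X_{r-1}$, so we fix $\xi\in X_{r-1}$ and put $x:=\pi^{(r-1)}(\xi)$. If $\rank H_{m,n}(x)\ge r$, then all centers lie over $Y_r\not\ni x$, the morphism is an isomorphism near $\xi$, and $I_r$ is the unit ideal there.

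\textbf{Case 1: $\rank H_{m,n}(x)=a$ with $1\le a\le r-1$.} By Lemma~\ref{lem:cat-local-reduction}(i) there is a neighbourhood $U\simeq U_0\times S_0^{m-a,n-a}$ on which, simultaneously for all $q$, the ideal $I_q$ is the unit ideal for $q\le a$ and equals $I_{q-a}(H_{m-a,n-a})$ for $q>a$. Hence the centers $Y_1,\dots,Y_a$ miss $U$, and for $j>a$ the center $Y_j$ is $U_0\times Y^{m-a,n-a}_{j-a}$. Since blowing up commutes with the flat base change by the smooth factor $U_0$, the restriction of the tower over $U$ is $U_0$ times the tower for the residual problem $(m-a,n-a,r-a)$. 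The induction hypothesis then gives smoothness, SNC and transversality there. It also gives the factorization with coefficients $(r-a)-\ell+1$ on $\overline E_\ell=E_{\ell+a}$, which equals $r-j+1$ for $j=\ell+a$. The divisors $E_1,\dots,E_a$ do not meet this neighbourhood, so they contribute nothing.

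\textbf{Case 2: $x=0$.} Let $\xi_1$ be the image of $\xi$ in $X_1=\Bl_0(S_0)$. By Lemma~\ref{lem:cat-coverage}, after a $\GL_2$-translate (the action lifts to every $X_j$) we may assume $\xi_1$ lies in the chart of Lemma~\ref{lem:cat-local-reduction}(ii). On that chart, simultaneously for all $q$, we have $I_q\cO=t^qI_{q-1}(H_{m-1,n-1}(z))$ with $E_1=(t=0)$. For $q\ge2$ the strict transforms of $Y_q$ are $\A^1_t\times\A^1_{p_1}\times Y^{m-1,n-1}_{q-1}$. The remaining blow-ups with centers $Y_2^{(1)},\dots,Y_{r-1}^{(r-2)}$ are therefore $\A^2$ times the tower for $(m-1,n-1,r-1)$. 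Applying the induction hypothesis yields
\[
I_{r-1}(H_{m-1,n-1}(z))\cO=\cO\Bigl(-\sum_{\ell=1}^{r-2}(r-\ell)\overline E_\ell\Bigr)\mathcal I_{\overline Y_{r-1}},
\]
with $\overline E_\ell=E_{\ell+1}$. Combined with $t^r$, this gives \eqref{eq:cat-factorization}.

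For normal crossings we must also check $E_1$. In these coordinates $E_1$ is the pullback of the hyperplane $\{t=0\}$, which involves the smooth factor $\A^1_t$ only. It is therefore transverse to every residual center and to the residual exceptional divisors, so $E_1+\overline E_1+\dots$ stays SNC and $Y_r^{(r-1)}$ remains transverse to all strata including $E_1$.

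\textbf{Main obstacle.} The delicate point is that the local identification of Lemma~\ref{lem:cat-local-reduction}(ii) must be compatible with all subsequent blow-ups. The centers at later stages are strict transforms, and they must agree with the residual strict transforms. I would argue this as follows. Strict transforms of a product $W\times Z$ under blow-ups of centers of the form $W\times C$ are $W$ times the strict transforms. Moreover, the identification holds for every $q$ at once, in the scheme-theoretic sense, by the "simultaneously" clause. A minor issue is that the charts in (ii) are only Zariski-local after $\GL_2$-translation, so gluing the factorization requires only that the ideal identity be local, which it is. Smoothness of each center $Y_j^{(j-1)}$ follows in the same way, by applying the same case analysis to the index $j$ in place of $r$.
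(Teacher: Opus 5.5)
Your proof is correct and is essentially the paper's argument: induction on $r$ over all admissible sizes, using the simultaneous reduction of Lemma~\ref{lem:cat-local-reduction}, the compatibility of the later blow-ups with the product decomposition, and the fact that $E_1=(t=0)$ lies in a separate smooth factor. The one difference is that the paper does not need your Case~1: the $\GL_2$-translates of the charts in Lemma~\ref{lem:cat-local-reduction}(ii) already cover all of $X_1$, including points with $t\neq0$, so the single reduction $(m,n,r)\mapsto(m-1,n-1,r-1)$ handles every point at once.
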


\begin{proof}
We argue by induction on $r$, simultaneously for all admissible matrix
sizes.  For $r=1$ there are no blow-ups: $Y_1$ is the smooth origin and
the asserted factorization is $I_1=\mathcal I_{Y_1}$.

Assume $r\geq2$.  The first center is the smooth origin.  By Lemma~\ref{lem:cat-local-reduction}(ii), $X_1$ is covered by charts $\A^1_t\times\A^1_{p_1}\times S_0^{m-1,n-1}$ on which $E_1=(t=0)$, the strict transform of each $Y_j$, $j\geq2$, is
$\A^2\times Y_{j-1}^{m-1,n-1}$, and $I_r\cO_{X_1}=t^r I_{r-1}(H_{m-1,n-1}).$ Since, blowing up a product center commutes with this product decomposition, the remaining $r-2$ blow-ups, together with the strict transform of $Y_r$, are the product with $\A^2$ of the corresponding construction for $(m-1,n-1,r-1)$.  A simple induction now proves smoothness and the normal-crossing claims for the residual boundary and target.  The old divisor $t=0$ is independent of all residual coordinates, so it is transverse to every residual boundary stratum and to the residual target.  This also proves the assertions involving $E_1$.

For the ideal decomposition, induction gives a factorization of the residual ideal with multiplicities
$(r-1)-\ell+1=r-\ell\;(1\leq\ell\leq r-2).$ The residual divisor indexed by $\ell$ is the original divisor
$E_{\ell+1}$.  Subsequent centers are independent of $t$, so pulling back
$t=0$ introduces no additional exceptional component.  Multiplication
by $t^r$ therefore yields
$$
I_r\cO = \cO\left(-rE_1-\sum_{j=2}^{r-1}(r-j+1)E_j\right) \mathcal I_{Y_r^{(r-1)}}.
$$
proving \eqref{eq:cat-factorization}.
\end{proof}

\begin{corollary}
\label{cor:cat-principalization} Let $ \pi^{(r-1)} : X_{r-1}\longrightarrow S_n$ be the Bertram embedded resolution above.
\begin{enumerate}
\item If $c>1$, blow up the smooth strict transform $Y_r^{(r-1)}$ and denote the resulting space by $X_r$ and its exceptional divisor by $E_r$. 
\item If $c=1$, put $X_r=X_{r-1}$ and $E_r=Y_r^{(r-1)}$. 
\end{enumerate}
Then the resulting morphism $\pi:X_r\longrightarrow S_0$ is a log-principalization of $I_r$, with
\begin{equation}
\label{eq:cat-total-transform}
I_r\cO_{X_r}
=\cO_{X_r}\left(-\sum_{j=1}^{r}(r-j+1)E_j\right).
\end{equation}
For $1\leq j\leq r$, its numerical data are
\begin{equation}
\label{eq:cat-numerical-data}
N_j=r-j+1,
\qquad
\nu_j=m+n-2j+1.
\end{equation}
\end{corollary}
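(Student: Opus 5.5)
The plan follows the symmetric case (Corollary~\ref{cor:GS-principalization} together with Lemmas~\ref{lem:order} and~\ref{lem:nu}), using Proposition~\ref{prop:cat-factorization} as input. There are three things to prove: the principalization identity \eqref{eq:cat-total-transform}, the simple normal crossings and isomorphism properties of $\pi$, and the numerical data \eqref{eq:cat-numerical-data}.

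First consider $c>1$. The strict transform $Y_r^{(r-1)}$ is smooth by Proposition~\ref{prop:cat-factorization}, so blowing it up gives $\mathcal I_{Y_r^{(r-1)}}\cO_{X_r}=\cO_{X_r}(-E_r)$. The generic point of $Y_r^{(r-1)}$ lies over $Y_r\setminus Y_{r-1}$, and all earlier blow-ups are isomorphisms there. Hence $Y_r^{(r-1)}$ is contained in no $E_j$ with $j<r$, and the pullback of $\sum_{j<r}(r-j+1)E_j$ is the same combination of strict transforms. Pulling \eqref{eq:cat-factorization} back to $X_r$ then gives \eqref{eq:cat-total-transform}. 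Proposition~\ref{prop:cat-factorization} also says that $Y_r^{(r-1)}$ is transverse to every stratum of the simple normal crossings divisor $E_1+\dots+E_{r-1}$. Blowing up a smooth center with this property keeps $E_1+\dots+E_r$ simple normal crossings. Every center lies over $Y_r$, so $\pi$ is an isomorphism off $V(I_r)$.

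Now consider $c=1$. Then $m=n=r$ and $Y_r^{(r-1)}$ is a smooth divisor transverse to the boundary. Reading \eqref{eq:cat-factorization} with $\mathcal I_{Y_r^{(r-1)}}=\cO(-E_r)$ already gives \eqref{eq:cat-total-transform}, and the coefficient of $E_r$ is $1=r-r+1$.

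For the numerical data, $N_j=r-j+1$ is read off directly from \eqref{eq:cat-total-transform}. For $\nu_j$ I would use the observation of \S2: the center $Y_j^{(j-1)}$ has its generic point over $Y_j\setminus Y_{j-1}$, outside all previously created exceptional divisors. So $\nu_j=\codim_{X_{j-1}}Y_j^{(j-1)}=\codim_{S_0}Y_j=m+n-2j+1$ by \eqref{eq:cat-dimension}. When $c=1$, $E_r$ is a strict transform of a hypersurface, so $\nu_r=1$, which again matches $m+n-2r+1=c=1$.

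The main point needing care is the coefficient of $K_{X_r/S_0}$ along each $E_j$. Later blow-ups could add to it only if a later center were contained in $E_j$. By the genericity argument no center is, so only the strict transform is taken, and the coefficient computed at creation time persists. I would justify this cleanly through the local product charts of Lemma~\ref{lem:cat-local-reduction}: there each later center is the product of a smooth factor with a residual rank locus, independent of the coordinate $t$ defining the earlier divisor.
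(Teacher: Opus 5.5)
Your proposal is correct and follows essentially the same route as the paper, whose proof simply transfers Corollary~\ref{cor:GS-principalization} and Lemmas~\ref{lem:order}, \ref{lem:nu} to the catalecticant setting using Proposition~\ref{prop:cat-factorization} and \eqref{eq:cat-dimension}. One small correction to your last paragraph: a later blow-up never changes the discrepancy coefficient along the strict transform of an earlier $E_j$, whatever its center; the fact that no center lies in an earlier exceptional divisor is needed only to get $\nu_j=\codim_{S_0}Y_j$ at the moment $E_j$ is created, and you had already argued that correctly.
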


\begin{proof} The proof is similar to those of Corollary \ref{cor:GS-principalization}, Lemma \ref{lem:order} and Lemma \ref{lem:nu}.
\end{proof}

\subsection{The fiber over the origin}

Let $F:=\pi^{-1}(0)$ and, for a nonempty $J\subseteq\{1,\ldots,r\}$, put $$E_J^\circ:=\left(\bigcap_{j\in J}E_j\right) \setminus\left(\bigcup_{j\notin J}E_j\right). $$
Let $q_1:\Bl_0(S_0)\to\PP^d$ be the natural projective-direction map, and let $\sigma:X_r\to\Bl_0(S_0)$ be the morphism obtained by composing the subsequent blow-ups. 
Finally, denote $\rho:=(q_1\circ\sigma)|_F: F\longrightarrow\PP^d.$
\begin{lemma}
\label{lem:cat-fiber}
The fiber $F$ is $E_1$, and is reduced.  In particular, $E_J^\circ\cap F=\varnothing\text{ unless }1\in J.$
Moreover, under $F=E_1$, the morphism $\rho:F\longrightarrow\PP^d$ is obtained by successively blowing up the strict transforms of $\PP(Y_2),\ldots,\PP(Y_r)$, omitting the last blow-up when $c=1$. When $c=1$ and $r\geq2$, the strict transform of $\PP(Y_r)$ is retained as a boundary divisor.
\end{lemma}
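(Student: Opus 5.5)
The plan is to follow the proof of Lemma~\ref{lem:fiber}, using Lemma~\ref{lem:cat-local-reduction} to control the geometry near the origin. Four facts need to be established, in the following order.

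\textbf{Step 1: $F=E_1$ set-theoretically.} After the first blow-up $\pi_1:X_1=\Bl_0(S_0)\to S_0$, the fiber over $0$ is $E_1^{(1)}\simeq\PP^d$. We then argue by induction along the tower. Suppose the full preimage of $0$ in $X_{j-1}$ is the strict transform $E_1^{(j-1)}$ of $E_1^{(1)}$. The next center $Y_j^{(j-1)}$ has a dense open subset mapping isomorphically onto $Y_j\setminus Y_{j-1}$, so it is not contained in $E_1^{(j-1)}$. Its intersection with $E_1^{(j-1)}$ is a proper closed subset of codimension at least one inside the divisor.

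To see that the new exceptional divisor $E_j$ does not acquire a component lying over $0$, we use the local product charts of Lemma~\ref{lem:cat-local-reduction}(ii). On each chart the center is $\A^1_t\times\A^1_{p_1}\times(\text{residual center})$, and $E_1=(t=0)$ is transverse to it. Hence the blow-up is a product with the $t$-line. It follows that $\pi_j^{-1}(E_1^{(j-1)})$ is the strict transform of $E_1^{(j-1)}$ together with $E_j\cap(t=0)$, and the latter lies in the closure of that strict transform.

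\textbf{Step 2: reducedness.} On each chart of Lemma~\ref{lem:cat-local-reduction}(ii), every coordinate $x_k$ is divisible by $t$. On $X_1$, the maximal ideal of the origin pulls back to the principal ideal $(t)$. Pulling this back further through the product blow-ups preserves the local equation $t$, since $t$ is independent of the residual coordinates. Thus $\mathfrak m_0\cO_{X_r}=\cO(-E_1)$, so the scheme-theoretic fiber is the reduced divisor $E_1$.

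\textbf{Step 3: $E_J^\circ\cap F=\varnothing$ unless $1\in J$.} This follows immediately from $F=E_1$.

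\textbf{Step 4: $\rho$ is the iterated blow-up.} On a chart of Lemma~\ref{lem:cat-local-reduction}(ii), we have $E_1^{(1)}=\{t=0\}\simeq\A^1_{p_1}\times S_0^{m-1,n-1}$, and this is an affine chart of $\PP^d$. By the same lemma, the restriction of $Y_j^{(1)}$ to it is $\A^1_{p_1}\times Y_{j-1}^{m-1,n-1}$, which is precisely the dehomogenized chart of $\PP(Y_j)$. Because each subsequent center is a product with $\A^1_t$ and is transverse to $(t=0)$, restricting the blow-up to $t=0$ gives the blow-up of the restricted center, using $\Bl_{C\times\A^1}(W\times\A^1)=\Bl_C(W)\times\A^1$. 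This identifies the restriction of the tower to $E_1$ with the successive blow-ups of the strict transforms of $\PP(Y_2),\ldots,\PP(Y_r)$.

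In the case $c=1$, the final blow-up is omitted by definition, and $E_r=Y_r^{(r-1)}$ restricts on $E_1$ to the strict transform of $\PP(Y_r)$. When $r\ge 2$ this is a divisor in $F$, which is therefore retained as a boundary divisor. Since the $\GL_2$-translates of these charts cover everything (Lemma~\ref{lem:cat-coverage}), the local identifications glue into global ones, because they are defined intrinsically as strict transforms.

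\textbf{Main obstacle.} The hardest point is the transversality and product structure in Step 4, namely ensuring that restriction to $t=0$ commutes with each blow-up. The plan is to handle this using the product charts of Lemma~\ref{lem:cat-local-reduction}(ii), together with the argument of Proposition~\ref{prop:cat-factorization} showing that all later centers are independent of $t$.
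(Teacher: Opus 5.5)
Your proof is correct, but it is organized differently from the paper's. The paper argues globally. It writes $\Bl_0(S_0)=\Tot(\cO_{\PP^d}(-1))$, with $E_1$ the zero section. Since each $Y_j$ with $j\geq2$ is a cone, its strict transform is just $q_1^{-1}(\PP(Y_j))$. Blowing up a center pulled back along the smooth bundle projection is the pullback of the blow-up of the base. Hence, inductively, $X_r\simeq\Tot(\rho^*\cO_{\PP^d}(-1))$ over the projective tower, and $\mathfrak m_0\cO_{X_r}$ is the ideal of the zero section; this gives all the claims at once.

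Your charts from Lemma~\ref{lem:cat-local-reduction}(ii) are exactly local trivializations of this line bundle over the affine chart $\{x_0\neq0\}\subset\PP^d$. There $t=x_0$ is the fibre coordinate, and $p_1,z_0,\dots,z_{d-2}$ are functions of the $x_k/x_0$. Your observation that every later center is a product with $\A^1_t$ is the local form of the fact that every later center is pulled back from the base.

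The price of working locally is your final gluing step. It is fine, but the cleanest way to phrase it is via the standard identification of the strict transform of $E_1$ in $\Bl_C$ with $\Bl_{C\cap E_1}E_1$ (scheme-theoretic intersection). Then only the equality of $C\cap E_1$ with the strict transform of $\PP(Y_j)$ has to be checked chart by chart, and such equalities of closed subschemes glue automatically.

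Each route has its advantage. The paper's is shorter, needs no gluing, and uses only the homogeneity of the $Y_j$, not the Hankel-specific reciprocal-series reduction. Yours simultaneously exhibits the transversality of the later centers to $E_1$ and the explicit dehomogenized form of the centers on $E_1$, which is what is used again in Proposition~\ref{prop:cat-residual-strata}.

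One small point: Lemma~\ref{lem:cat-local-reduction} assumes $m,n\geq2$. The case $\min\{m,n\}=1$, which forces $r=1$, should be dispatched directly, but it is immediate from the blow-up of a point.
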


\begin{proof}
The blow-up of $\A^{d+1}$ at the origin is the total space $\Tot(\cO_{\PP^d}(-1))$.  Its zero section is the first exceptional divisor when $d\geq1$.  For $j\geq2$, the strict transform of the homogeneous cone $Y_j$ is the inverse image of $\PP(Y_j)$ under the projection $q_1:\Bl_0(S_0)\to\PP^d$. 
It follows inductively that the final ambient space is $X_r\simeq\Tot(\rho^*\cO_{\PP^d}(-1)),$ where the base is the stated projective blow-up tower.  The pullback of the maximal ideal of $0\in S_0$ is the ideal of the zero section.  This proves the claims.
\end{proof}

\subsection{Euler characteristics of the boundary strata}

For $1\leq a<r$, if we set
$$R_a:=\{[x]\in\PP^d:\rank H_{m,n}(x)=a\},\; U_r:=\{[x]\in\PP^d:\rank H_{m,n}(x)\geq r\}.
$$
then it is obvious that  $\PP^d=U_r\sqcup R_1\sqcup\cdots\sqcup R_{r-1}$.

\begin{lemma}
\label{lem:cat-rank-euler}
One has $\chi(R_a)=2\;(1\leq a<r) $ and $\chi(U_r)=c.$
\end{lemma}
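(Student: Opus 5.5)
The plan is to compute $\chi(R_a)$ by the torus action, and then get $\chi(U_r)$ from additivity. The diagonal torus $T=(\C^\times)^2\subset\GL_2(\C)$ acts on $W=\C^2$, and through the identification $S_0=(\Sym^dW)^\vee$ of Lemma~\ref{lem:cat-coverage} it acts on $\PP^d$ with distinct weights on the coordinates $x_0,\ldots,x_d$. Since the $\GL_2$-action preserves every ideal $I_q$, each $R_a$ and $U_r$ is $T$-stable. The $T$-fixed points of $\PP^d$ are exactly the $d+1$ coordinate points $e_k=[0:\cdots:1:\cdots:0]$, where only $x_k$ is nonzero. As in Lemma~\ref{lem:Qchi}, I will use $\chi(Z)=\chi(Z^T)$, so that everything reduces to computing the rank of $H_{m,n}(e_k)$.

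At $e_k$ the Hankel matrix has ones exactly on the antidiagonal $i+j=k$ with $0\le i\le m-1$ and $0\le j\le n-1$. These entries sit in distinct rows and distinct columns, so the rank equals the number of such pairs, namely $\#\{i:\max(0,k-n+1)\le i\le\min(k,m-1)\}$. As $k$ runs over $0,\ldots,d$, this count goes $1,2,\ldots$, stays at $\min\{m,n\}$ on a plateau, and then decreases $\ldots,2,1$. Reading off the sequence:
\begin{itemize}
\item each value $a<\min\{m,n\}$ is attained exactly twice, once for $k=a-1$ and once for $k=d-a+1$;
\item the value $\min\{m,n\}$ is attained $|m-n|+1$ times.
\end{itemize}
Since $r\le\min\{m,n\}$, every $a$ with $1\le a<r$ gives $\chi(R_a)=2$.

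For $U_r$ I use additivity. We have $\chi(\PP^d)=d+1=m+n-1$, so $\chi(U_r)=m+n-1-2(r-1)=m+n-2r+1=c$. As a cross-check, one can also count the fixed points with rank $\ge r$ directly: there are $2(\min\{m,n\}-r)$ of them with rank below the maximum and $|m-n|+1$ with maximal rank, which again gives $c$.

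The only point needing care is the justification of $\chi(Z)=\chi(Z^T)$ for these locally closed, noncompact $T$-stable subvarieties. This is fine because it is the standard statement for compactly supported Euler characteristics, and it is exactly the fact already used in Lemma~\ref{lem:Qchi}. Apart from that, the edge cases $m=1$ or $n=1$ are handled by the same count, since then $r=1$ and $U_1=\PP^d$ has $\chi=d+1=c$.
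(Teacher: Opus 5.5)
Your proof is correct and follows essentially the same route as the paper: a torus acting on $\PP^d$ with distinct weights on $x_0,\ldots,x_d$, reduction to the coordinate points via $\chi(Z)=\chi(Z^T)$, the antidiagonal rank count showing that exactly $[e_{a-1}]$ and $[e_{d-a+1}]$ have rank $a<r$, and additivity over $\PP^d=U_r\sqcup R_1\sqcup\cdots\sqcup R_{r-1}$ for $U_r$. The only cosmetic difference is that you use the diagonal torus of $\GL_2$ through the binary-forms identification, whereas the paper writes down the one-parameter action $\lambda\cdot x_k=\lambda^k x_k$ directly via left and right diagonal multiplication; modulo scalars these are the same action on $\PP^d$.
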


\begin{proof}
Let $\C^*$ act by $\lambda\cdot x_k=\lambda^kx_k$.  On $H_{m,n}$ this is induced by left multiplication by
$\diag(1,\lambda,\ldots,\lambda^{m-1})$ and right multiplication by $\diag(1,\lambda,\ldots,\lambda^{n-1})$.  Thus all the rank strata are invariant.  The weights $0,\ldots,d$ on the ambient projective space are
distinct, so the fixed points are exactly the coordinate points
$[e_0],\ldots,[e_d]$.

At $[e_\ell]$, the matrix is supported on the anti-diagonal $i+j=\ell$. Since its nonzero entries lie in distinct rows and columns,  its rank is the number of such entries, namely
$$\rank H_{m,n}(e_\ell)=\min\{\ell+1,m,n,d-\ell+1\}.
$$
For $a<r\leq\min\{m,n\}$, exactly two coordinate points have rank $a$:$[e_{a-1}]$ and $[e_{d-a+1}]$.  These points are distinct, since $d\geq2r-2$ and $a\leq r-1$.  The torus-fixed-point identity $\chi(Z)=\chi(Z^{\C^*})$ therefore gives $\chi(R_a)=2$. By additivity, one gets $$ \chi(U_r)=\chi(\PP^d)-\sum_{a=1}^{r-1}\chi(R_a) =d+1-2(r-1)=c.$$
This includes $r=1$, when the sum is empty.
\end{proof}

We next identify the fibers of the boundary strata.  This replaces the
kernel-flag description used for the other matrix types.

\begin{proposition}
\label{prop:cat-residual-strata} (i) The restriction of $\rho$ gives an isomorphism $E_{\{1\}}^\circ\cap F\simeq U_r.$

(ii) Let $J\subseteq\{1,\ldots,r\}$ contain $1$ and satisfy $|J|\geq2$. Denote $$ a:=\min(J\setminus\{1\})-1, \qquad (m',n',r'):=(m-a,n-a,r-a), \qquad J':=\{j-a:j\in J\setminus\{1\}\}. $$ Let $\bar\pi:\bar X\to S_0^{m',n'}$ be the principalization constructed above for the residual ideal $I_{r'}(H_{m',n'})$, with divisors $\bar E_1,\ldots,\bar E_{r'}$ and fiber $\bar F=\bar\pi^{-1}(0)$. Then $1\in J'$, and
\begin{equation}
\label{eq:cat-stratum-fibration}
E_J^\circ\cap F\longrightarrow R_a
\end{equation}
is a Zariski locally trivial fibration with fiber
$\bar E_{J'}^\circ\cap\bar F$.
\end{proposition}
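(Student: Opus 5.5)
The plan is to work entirely inside $F=E_1$, using Lemma~\ref{lem:cat-fiber} to view $\rho\colon F\to\PP^d$ as the tower of blow-ups of the strict transforms of $\PP(Y_2),\ldots,\PP(Y_r)$, with the last one omitted when $c=1$. Recall that $E_j\cap F$ for $j\ge2$ is the exceptional divisor over the strict transform of $\PP(Y_j)$, and that $\PP(Y_j)=\{\rank\le j-1\}$.

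For (i), take a point of $E_{\{1\}}^\circ\cap F$. It lies on no $E_j$ with $j\ge2$, so its image under $\rho$ avoids every center $\PP(Y_2),\ldots,\PP(Y_r)$, that is, it has rank $\ge r$. The blow-up tower is an isomorphism over $\PP^d\setminus\PP(Y_r)=U_r$. Conversely, the preimage of $U_r$ meets no $E_j$ with $j\ge2$. So $\rho$ restricts to an isomorphism $E_{\{1\}}^\circ\cap F\simeq U_r$. When $c=1$ the last divisor is the strict transform of $\PP(Y_r)$, and the same argument applies.

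For (ii), let $x\in E_J^\circ\cap F$. The argument is the one used in Step~1 of Proposition~\ref{prop:strata}, now with the catalecticant reduction.
\begin{itemize}
\item The first center containing $\rho(x)$ must be the first $E_j$ with $j\ge2$ that contains $x$, which is $E_{a+1}$. Hence $\rank\rho(x)=a$, so $\rho(x)\in R_a$, and $1\le a<r$.
\item Over a point of $R_a$, apply Lemma~\ref{lem:cat-local-reduction}(iii). It gives a neighbourhood $U\simeq U_0\times S_0^{m-a,n-a}$ in $\PP^d$ with $R_a\cap U=U_0\times\{0\}$. The projective rank loci $\PP(Y_q)$ become $U_0\times Y^{m-a,n-a}_{q-a}$, simultaneously for all $q$.
\item Over $U$, the blow-ups of $\PP(Y_2),\ldots,\PP(Y_a)$ are isomorphisms, since those loci miss $U$. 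The remaining tower is $U_0$ times the residual tower for $(m',n',r')$. The blow-up of $\PP(Y_{a+1})\cap U=U_0\times\{0\}$ becomes $U_0\times$ the first residual blow-up at the origin, then $\PP(Y_{a+2})$ and later loci are handled the same way, and the final step with $c=1$ versus $c>1$ is treated identically.
\item Under this identification $E_{j}\leftrightarrow U_0\times\bar E_{j-a}$ for $j\ge a+1$, and $E_2,\ldots,E_a$ do not meet the preimage. Hence the preimage of $R_a\cap U$ in $E_J^\circ\cap F$ is $U_0\times(\bar E_{J'}^\circ\cap\bar F)$. Here $\bar F$ is the residual fiber over $0$, which equals $\bar E_1$ by Lemma~\ref{lem:cat-fiber}, and $1\in J'$ because $a+1\in J$.
\end{itemize}
The residual $c'=m'+n'-2r'+1$ equals $c$, so the final-step convention matches.

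Since the translates of such charts cover everything by Lemma~\ref{lem:cat-coverage}, we get Zariski local triviality over $R_a$ with fiber $\bar E_{J'}^\circ\cap\bar F$.

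The main obstacle is justifying that the local product identifications of Lemma~\ref{lem:cat-local-reduction}(iii) are compatible with the whole blow-up tower restricted to $F$. One must check that blowing up $U_0\times C$ is $U_0\times\Bl_C$ at each stage, and that the divisor $E_1$ (the zero section) does not interfere. For the latter I would use the description $X_r\simeq\Tot(\rho^*\cO(-1))$ from Lemma~\ref{lem:cat-fiber}, so that everything reduces to the projective tower on $\PP^d$.
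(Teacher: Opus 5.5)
Your proposal is correct and follows essentially the same route as the paper: the local product $\rho^{-1}(U)\simeq U_0\times\bar X$ over an exact-rank stratum coming from Lemma~\ref{lem:cat-local-reduction}(iii), the index shift $E_{a+k}\leftrightarrow U_0\times\bar E_k$, the equality $\bar F=\bar E_1$ from Lemma~\ref{lem:cat-fiber}, and the check that the residual parameter is again $c$. The one thing to tidy is the order of the argument. Your first bullet ($\rank\rho(x)=a$) needs the fact that every point over $R_b$ lies on $E_{b+1}$, for every $b<r$. The paper proves this first from the same local description, and it also follows from the preimage-of-centers argument you already used in (i); state it before deducing $\rho(x)\in R_a$.
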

\begin{proof}
We first describe the restriction of the projective blow-up tower over an exact-rank stratum.  Fix an integer $1\leq b<r$ and a point $y\in R_b$.
By Lemma~\ref{lem:cat-local-reduction}(iii), there is a Zariski neighbourhood $U$ of $y$ and an algebraic isomorphism
$$
U\simeq U_0\times S_0^{m-b,n-b},\; R_b\cap U=U_0\times\{0\},
$$
where $U_0$ is smooth.  The simultaneous ideal identities in that lemma give, for every $2\leq j\leq r$,
$$
\PP(Y_j)\cap U\simeq
\begin{cases}
\varnothing,&j\leq b,\\
U_0\times Y_{j-b}^{m-b,n-b},&j>b.
\end{cases}
$$

Let $\bar\pi:\bar X\longrightarrow S_0^{m-b,n-b}$ be the principalization for the residual parameters $(m-b,n-b,r-b)$, and let $\bar E_1,\ldots,\bar E_{r-b}$ be its boundary divisors.  Since the blow-ups commute with this product, the same is true for the successive strict transforms. It follows
inductively that the restriction of the entire projective tower is an isomorphism over $U$,
\begin{equation}
\label{eq:cat-stratum-local-tower}
\rho^{-1}(U)\simeq U_0\times\bar X,\qquad\rho|_{\rho^{-1}(U)}\longleftrightarrow \operatorname{id}_{U_0}\times\bar\pi.
\end{equation}
Under this isomorphism, every labeled divisor with index $j=b+k$, $1\leq k\leq r-b$, corresponds to $U_0\times\bar E_k$ and the divisors $E_2,\ldots,E_b$ do not meet $\rho^{-1}(U)$.  This includes the last boundary divisor: the codimension of the residual determinantal locus is
\begin{equation}
\label{eq:cat-residual-codimension}
(m-b)+(n-b)-2(r-b)+1=m+n-2r+1=c,
\end{equation}
so the final divisor is exceptional or a retained strict transform in both constructions simultaneously.

Denote $\bar F:=\bar\pi^{-1}(0)$.  Restricting \eqref{eq:cat-stratum-local-tower} to $R_b\cap U=U_0\times\{0\}$ gives
\begin{equation}
\label{eq:cat-stratum-local-fiber}
\rho^{-1}(R_b\cap U)\simeq U_0\times\bar F,
\end{equation}
and, for $1\leq k\leq r-b$,
\begin{equation*}
\label{eq:cat-stratum-local-divisors}
E_{b+k}\cap\rho^{-1}(R_b\cap U)
\simeq U_0\times(\bar E_k\cap\bar F).
\end{equation*}
By Lemma~\ref{lem:cat-fiber}, applied to the residual problem, $\bar F=\bar E_1$.  Hence every point in $\rho^{-1}(R_b)$ belongs to $E_{b+1}$, and no such point belongs to $E_2,\ldots,E_b$. In particular, the smallest boundary index greater than $1$ at any point over $R_b$ is exactly $b+1$.  This conclusion also holds when the residual parameters are $(1,1,1)$: in that case $\bar\pi$ is the identity of $\A^1$ and $\bar F=\bar E_1=\{0\}$.

We next prove the assertion for $J=\{1\}$.  Recall that $U_r=\PP^d\setminus\PP(Y_r).$ All centers of the projective blow-up tower are contained in the
successive inverse images of $\PP(Y_r)$, and the retained final divisor, if any, is supported there as well.  Thus $\rho$ is an isomorphism
over $U_r$, and none of $E_2,\ldots,E_r$ meets $\rho^{-1}(U_r)$. Conversely, every point over $\PP(Y_r)$ lies over some $R_b$ with
$1\leq b<r$, and hence belongs to $E_{b+1}$.  Since $F=E_1$, we conclude that
\[
E_{\{1\}}^\circ\cap F
=F\setminus\bigcup_{j=2}^{r}(E_j\cap F)
=\rho^{-1}(U_r)
\xrightarrow[\ \rho\ ]{\ \sim\ }U_r.
\]
For $r=1$, the union is empty and the same conclusion follows directly
from Lemma~\ref{lem:cat-fiber}.

Now let $J=\{1=j_0<j_1<\cdots<j_t\}$ with $t\geq1$, and set $a=j_1-1,\;J'=\{j-a:j\in J\setminus\{1\}\}.$ A point of $E_J^\circ\cap F$ belongs to $E_{j_1}$ but to none of $E_2,\ldots,E_{j_1-1}$.  The preceding description of the fibers over $R_b$ shows that its image under $\rho$ has rank exactly $b=j_1-1=a$.  Thus, $\rho(E_J^\circ\cap F)\subseteq R_a.$ Notice that $j_1=a+1$, so $1\in J'$.  In the local product \eqref{eq:cat-stratum-local-fiber}, taken with $b=a$, we have
$$
E_{a+k}\cap\rho^{-1}(R_a\cap U) \simeq U_0\times(\bar E_k\cap\bar F) \quad(1\leq k\leq r-a),
$$
while $E_2,\ldots,E_a$ are absent and $E_1$ contains the whole fiber.
Under the index correspondence $E_{a+k}\leftrightarrow\bar E_k$, one gets
\begin{equation}
\label{eq:cat-stratum-local-trivialization}
(E_J^\circ\cap F)\cap\rho^{-1}(R_a\cap U) \simeq U_0\times(\bar E_{J'}^\circ\cap\bar F),
\end{equation}
where $\bar\pi$ now denotes the residual principalization for $(m-a,n-a,r-a)$.  As the isomorphism is over $U_0=R_a\cap U$, these
identifications give Zariski trivializations of \eqref{eq:cat-stratum-fibration}.  
\end{proof}
\begin{proposition}
\label{prop:cat-boundary-euler}
For every nonempty $J\subseteq\{1,\ldots,r\}$,
\begin{equation}
\label{eq:cat-boundary-euler}
\chi(E_J^\circ\cap F)
=
\begin{cases}
0,&1\notin J,\\
c\,2^{|J|-1},&1\in J.
\end{cases}
\end{equation}
\end{proposition}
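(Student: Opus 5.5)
The plan is induction on $|J|$, simultaneously over all admissible parameters $(m,n,r)$. The induction is driven by Proposition~\ref{prop:cat-residual-strata}, and the key observation is that the quantity $c=m+n-2r+1$ does not change under the residual substitution $(m,n,r)\mapsto(m-a,n-a,r-a)$, as recorded in \eqref{eq:cat-residual-codimension}.

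The case $1\notin J$ is immediate. By Lemma~\ref{lem:cat-fiber}, $F=E_1$, so $E_J^\circ\cap F=\varnothing$ and its Euler characteristic vanishes.

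Suppose next that $1\in J$.

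\textbf{Base case $J=\{1\}$.} Proposition~\ref{prop:cat-residual-strata}(i) gives $E_{\{1\}}^\circ\cap F\simeq U_r$. Lemma~\ref{lem:cat-rank-euler} then gives $\chi(U_r)=c=c\,2^{0}$. This includes $r=1$ and the degenerate residual situations, for instance parameters $(1,1,1)$, where $F$ is a point and $c=1$.

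\textbf{Inductive step, $|J|\ge 2$.} Put $a=\min(J\setminus\{1\})-1$, so $1\le a<r$. By Proposition~\ref{prop:cat-residual-strata}(ii), $E_J^\circ\cap F\to R_a$ is a Zariski locally trivial fibration with fiber $\bar E_{J'}^\circ\cap\bar F$, computed for the residual parameters $(m-a,n-a,r-a)$. Here $1\in J'$ and $|J'|=|J|-1$.

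Euler characteristics with compact support are multiplicative in Zariski locally trivial fibrations: stratify the base into trivializing pieces and use additivity. Together with $\chi(R_a)=2$ from Lemma~\ref{lem:cat-rank-euler}, this gives
\[
\chi(E_J^\circ\cap F)=2\,\chi(\bar E_{J'}^\circ\cap\bar F).
\]
The residual $c$ equals $(m-a)+(n-a)-2(r-a)+1=c$. The induction hypothesis therefore gives $\chi(\bar E_{J'}^\circ\cap\bar F)=c\,2^{|J'|-1}=c\,2^{|J|-2}$, and multiplying by $2$ yields $c\,2^{|J|-1}$.

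\textbf{Main obstacle.} The delicate point is checking that the induction hypothesis legitimately applies to the residual problem. There are two things to verify.

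\emph{Residual sizes.} Lemma~\ref{lem:cat-local-reduction} was stated for $m,n\ge2$, and the residual sizes $m-a,n-a$ may drop to $1$. I would handle the boundary cases, where a residual size equals $1$ (possible only when $r-a=1$), directly. In that case $\bar F$ is $\PP^{d'}$ with $d'=m+n-2a-2$, or a point, and $J'=\{1\}$. The base case then gives $\chi(U_{1})=d'+1=c$, which is consistent.

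\emph{Final divisor.} The last divisor $E_r$ must be of the same type in the original and the residual constructions: exceptional when $c>1$, a retained strict transform when $c=1$. This is exactly what \eqref{eq:cat-residual-codimension} guarantees.

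With these checks, the formula follows for all $J$.
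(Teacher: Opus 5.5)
Your proof is correct and is essentially the paper's argument. It gets the case $1\notin J$ from $F=E_1$, the case $J=\{1\}$ from $E_{\{1\}}^\circ\cap F\simeq U_r$ with $\chi(U_r)=c$, and the inductive step from the fibration over $R_a$ with $\chi(R_a)=2$ together with the invariance of $c$ under $(m,n,r)\mapsto(m-a,n-a,r-a)$; the only cosmetic difference is that you induct on $|J|$ over all parameter triples rather than on $r$, which works equally well since $|J'|=|J|-1$ and $r-a<r$.
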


\begin{proof}
The first case follows from Lemma~\ref{lem:cat-fiber}.  We prove the second by induction on $r$.  If $r=1$, then $F=\PP^{c-1}$ and its Euler
characteristic is $c$.  For arbitrary $r$, the case $J=\{1\}$ follows from Proposition~\ref{prop:cat-residual-strata} and Lemma~\ref{lem:cat-rank-euler}.

Suppose $|J|\geq2$, and use the notation of Proposition~\ref{prop:cat-residual-strata}.  The residual index $r'=r-a$ is strictly smaller than $r$, its codimension parameter is still $c$ by \eqref{eq:cat-residual-codimension}, and $|J'|=|J|-1$. Induction gives $\chi(\bar E_{J'}^\circ\cap\bar F)=c\,2^{|J|-2}.$ Multiplicativity of Euler characteristic for the Zariski locally trivial fibration \eqref{eq:cat-stratum-fibration} therefore yields
$$
\chi(E_J^\circ\cap F) =\chi(R_a)\,\chi(\bar E_{J'}^\circ\cap\bar F) =2\cdot c\,2^{|J|-2}=c\,2^{|J|-1}.
$$
This completes the induction.
\end{proof}

\subsection{Proof of the formula and the determinant hypersurface}

\begin{proof}[Proof of Theorem~\ref{thm:cat-zeta}]
By Corollary~\ref{cor:cat-principalization}, $N_js+\nu_j=D_j(s)$. The definition of the local topological zeta function gives
$$ \Ztop_{I_r,0}(s) =\sum_{\varnothing\neq J\subseteq\{1,\ldots,r\}} \chi(E_J^\circ\cap F)\prod_{j\in J}\frac1{D_j(s)}.
$$
Substituting Proposition~\ref{prop:cat-boundary-euler} and writing
$J=\{1\}\cup K$, we obtain
\begin{align*}
\Ztop_{I_r,0}(s) &=\frac{c}{D_1(s)} \sum_{K\subseteq\{2,\ldots,r\}} \prod_{j\in K}\frac{2}{D_j(s)} =\frac{c}{D_1(s)} \prod_{j=2}^{r}\left(1+\frac{2}{D_j(s)}\right).
\end{align*}
This proves \eqref{eq:cat-zeta-product}.  To obtain \eqref{eq:cat-zeta-reindexed}, put $q=r-j+1$ and observe that
$$ D_j(s)=q(s+2)+c-2, \; D_j(s)+2=q(s+2)+c. $$
As $j$ runs from $2$ to $r$, $q$ runs from $r-1$ to $1$. Hence, the formula follows.
\end{proof}

\begin{proof}[Proof of Corollary \ref{cor:cat-determinant}]
Take $m=n=r$ in Theorem~\ref{thm:cat-zeta}, so that $c=1$. The denominator roots $-2+1/q$ are distinct and strictly greater than $-2$, whereas all numerator roots are of the form $-2-1/q<-2$. Thus, there are no cancellations proving the claims on poles. Finally, the last claim follows directly from the description of the Bernstein--Sato polynomial of $f_n=\det H_{n,n}$  given in \cite[Theorem~1.1]{Lin26}.
\end{proof}

\begin{remark}
The difference from the other matrix types is the Euler characteristic of the projective exact-rank strata: here $\chi(R_a)=2$ for every $a<r$.  Consequently, every subset containing $1$ contributes, and its coefficient depends only on its cardinality.  In particular, there is no restriction to jumps of length one or two; the sum over all subsets factors directly into the product \eqref{eq:cat-zeta-product} (cf. Remark \ref{rem:decdiff}).
\end{remark}
\section{The generic determinantal case}

Let $V$ and $W$ be complex vector spaces of dimensions $n$ and $m$,
respectively, and set
$$
S_0:=\Hom(V,W)\simeq \A^{mn}.
$$
After choosing bases, $S_0$ is the affine space of generic $m\times n$
matrices. For $1\leq j\leq \min\{m,n\}+1$, set
$$
Y_j:=\{A\in S_0:\rank A\leq j-1\}.
$$
Fix an integer $1\leq r\leq \min\{m,n\},$ and let $I_r$ denote the ideal generated by the $r\times r$ minors of the generic $m\times n$ matrix. Then $V(I_r)=Y_r.$ For $1\leq j\leq r$, define
$$ D_j(s) := (r-j+1)s+(m-j+1)(n-j+1).$$
\begin{theorem}\label{thm:generic-determinantal-zeta}
With the notation above, the local topological zeta function of $I_r$
at the origin is
$$
Z^{\mathrm{top}}_{I_r,0}(s)
=
\prod_{j=1}^r
\frac{1}{
1+
\displaystyle
\frac{r-j+1}{(m-j+1)(n-j+1)}\,s
}.
$$
\end{theorem}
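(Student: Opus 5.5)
We follow the same scheme as for the symmetric and catalecticant cases. By the simultaneous local reduction recorded in Remark~\ref{rem:simultaneous-local-reduction-other-types}, successively blowing up the strict transforms of $Y_1,\dots,Y_{r-1}$ (the Vainsencher/complete-collineations tower), and finally blowing up the smooth strict transform of $Y_r$ when $\codim Y_r=(m-r+1)(n-r+1)>1$, gives a log-principalization. This is proved by the same induction on $r$ as in Proposition~\ref{prop:factorizing-GS} and Corollary~\ref{cor:GS-principalization}, with residual problem $(m,n,p)\mapsto(m-1,n-1,p-1)$ on the blow-up of the origin. The numerical data are then
\[
N_j=r-j+1,\qquad \nu_j=\codim Y_j=(m-j+1)(n-j+1),
\]
so $N_js+\nu_j=D_j(s)$. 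Lemma~\ref{lem:fiber} carries over verbatim, giving $F=E_1$; hence only subsets $J\ni 1$ contribute.

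Next we describe the strata. For $J=\{1=j_0<\cdots<j_t=b\}$ with jumps $d_\ell$, the argument of Proposition~\ref{prop:strata} goes through with a symmetric form replaced by a pair (kernel, image). At each selected stage one records a rank-$d_\ell$ map between the successive residual spaces. Thus $E_J^\circ\cap F$ fibres over a product of two partial flag varieties, a kernel flag in $V$ and an image flag in $W$ with quotient dimensions $d_1,\dots,d_t$ and residual dimensions $n-b+1$ and $m-b+1$. The fibre is
\[
\prod_\ell \PP(\GL_{d_\ell})\times U'_{m-b+1,n-b+1,r-b+1},
\]
where $U'_{p,q,a}$ denotes the projective $p\times q$ matrices of rank $\ge a$.

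Then we compute Euler characteristics. The group $\PP(\GL_d)=\GL_d/\C^*$ has $\chi=0$ for $d\ge2$, since the torus $(\C^*)^d/\C^*$ acts freely on it. Hence $\chi=1$ only when $d=1$, so only consecutive paths survive. By the torus-fixed-point argument of Lemma~\ref{lem:rankchi}, the exact rank-$q$ stratum of $\PP(\Hom(\C^q,\C^p))$ has $\chi=\binom pq\binom {p'}q\chi(\PP\GL_q)$, writing $p'$ for the second size. This is nonzero only for $q=1$, where it equals $pp'$. Therefore $\chi(U'_{p,p',a})=pp'$ if $a=1$, and $=0$ if $a\ge2$. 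Consequently the only nonzero strata are the intervals $J=\{1,\dots,b\}$, and only $b=r$ survives. Its Euler characteristic is
\[
\frac{n!}{(n-r+1)!}\cdot\frac{m!}{(m-r+1)!}\cdot(m-r+1)(n-r+1)=\frac{m!\,n!}{(m-r)!\,(n-r)!},
\]
where the first two factors are the Euler characteristics of the full flags with $r-1$ one-dimensional steps.

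Finally we assemble the formula. The zeta function is the single term
\[
\frac{m!\,n!}{(m-r)!\,(n-r)!\,\prod_{j=1}^rD_j(s)}.
\]
Since $\prod_{j=1}^r(m-j+1)(n-j+1)=\frac{m!\,n!}{(m-r)!\,(n-r)!}$, dividing each $D_j(s)$ by $(m-j+1)(n-j+1)$ gives exactly the stated product. The main obstacle is the stratum description, the analogue of Proposition~\ref{prop:strata}. One must check that the Schur-complement chart for $\begin{pmatrix}P&Q\\ R&S\end{pmatrix}$, with residual $S-RP^{-1}Q$, identifies the whole tower with the residual $(m-d,n-d)$ tower. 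One must also check that the exceptional direction at each selected stage is canonically a projective homomorphism between the residual kernel and cokernel, with nonvanishing Euler characteristic only in rank one. The rest is bookkeeping identical to the symmetric case.
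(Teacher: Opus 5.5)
Your proposal is correct and follows the paper's proof essentially step by step. It uses the same blow-up tower and log-principalization with $N_j=r-j+1$ and $\nu_j=(m-j+1)(n-j+1)$, and the same fibration of $E_J^\circ\cap F$ over a kernel flag times an image flag with fibre $P_{d_1}\times\cdots\times P_{d_t}\times U_{a,c,r-b+1}$; your free action of $(\C^*)^d/\C^*$ giving $\chi(\PGL_d)=0$ for $d\ge 2$ is the paper's ``no $T$-fixed points'' argument in different words, and it leaves only $J=\{1,\ldots,r\}$. The only slips are cosmetic: the rank-$q$ stratum should sit in $\PP(\Hom(\C^{p'},\C^{p}))$ rather than $\PP(\Hom(\C^q,\C^p))$, and you should state explicitly the case $m=n=r$, where $E_r$ is the retained strict transform of the determinant hypersurface.
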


We use the standard resolution of generic determinantal varieties obtained by successively blowing up the strict transforms of the rank strata (cf. \cite{Vai84}, \cite{Joh03}, \cite[\S 3]{Sta03} and \cite[\S5]{Sch18}).

Put $X_0:=S_0.$ For $1\leq j\leq r-1$, let $\pi_j:X_j\longrightarrow X_{j-1}$ be the blow-up of the strict transform $Y_j^{(j-1)}$ of $Y_j$ in $X_{j-1}$. After these blow-ups, the strict transform $Y_r^{(r-1)}$ is smooth and
meets the exceptional divisor with normal crossings. Let $\pi^{(r-1)} :=\pi_1\circ\cdots\circ\pi_{r-1}:
X_{r-1}\longrightarrow S_0$ be the resulting morphism, and let $E_j\subset X_{r-1}$ denote the final strict transform of the exceptional divisor created by $\pi_j$.

\begin{proposition}
\label{prop:factorizing-generic-determinantal}
Then the total transform of $I_r$ on $X_{r-1}$ admits the
factorization
$$
I_r\OO_{X_{r-1}}=\OO_{X_{r-1}}\left(-\sum_{j=1}^{r-1}(r-j+1)E_j\right)\cdot I_{Y_r^{(r-1)}}.
$$
Here $I_{Y_r^{(r-1)}}$ denotes the ideal sheaf of the strict transform $Y_r^{(r-1)}\subset X_{r-1}$.
\end{proposition}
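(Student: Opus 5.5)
We argue exactly as in Proposition~\ref{prop:factorizing-GS}, by induction on $r$, simultaneously for all matrix sizes $(m,n)$. For $r=1$ there are no blow-ups and $I_1=I_{Y_1}$ is the maximal ideal of the origin, so there is nothing to prove. For $r\geq2$ the identity is local on $X_{r-1}$. We fix $\xi\in X_{r-1}$ and put $A:=\pi^{(r-1)}(\xi)$. If $\rank A\geq r$, then $I_r$ is the unit ideal near $A$ and all blow-ups are isomorphisms there. We may therefore assume $d:=\rank A\leq r-1$.

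\textbf{Step 1: the local reduction for generic matrices.} We first record the analogue of Lemma~\ref{lem:simultaneous-local-reduction} for $m\times n$ matrices, as indicated in Remark~\ref{rem:simultaneous-local-reduction-other-types}.

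(i) Suppose $d\geq1$. After changes of bases in $V$ and $W$, the upper-left $d\times d$ block $P$ of $M=\begin{pmatrix}P&Q\\ Q'&R\end{pmatrix}$ is invertible near $A$. Put $B:=R-Q'P^{-1}Q$. The map $(P,Q,Q',R)\mapsto(P,Q,Q',B)$ is an isomorphism, and the two-sided elimination
$$\begin{pmatrix}I&0\\-Q'P^{-1}&I\end{pmatrix}M\begin{pmatrix}I&-P^{-1}Q\\0&I\end{pmatrix}=\begin{pmatrix}P&0\\0&B\end{pmatrix}$$
gives, simultaneously for all $p$, that $I_p(M)$ is the unit ideal for $p\leq d$ and equals $I_{p-d}(B)$ for $p>d$. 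Here $B$ is a generic $(m-d)\times(n-d)$ matrix.

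(ii) On the $x_{11}$-chart of $\Bl_0(S_0)$, write $\sigma^*M=t\begin{pmatrix}1&v^t\\u&C\end{pmatrix}$ and set $B:=C-uv^t$. The same elimination gives $I_p\OO=t^pI_{p-1}(B)$ for every $p$. The action of $\GL(V)\times\GL(W)$ moves any point of the exceptional divisor into such a chart, because every nonzero matrix has some nonzero entry.

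\textbf{Step 2: induction.} In Case~1 ($d\geq1$), the centers $Y_1,\dots,Y_d$ miss a neighbourhood $U$ of $A$. On $U\simeq U_0\times\Hom(\C^{n-d},\C^{m-d})$ the remaining tower is the product with $U_0$ of the tower for $(m-d,n-d,r-d)$, since blowing up commutes with the smooth product factor. The induction hypothesis then gives multiplicities $(r-d)-\ell+1=r-j+1$ under the relabelling $\ell=j-d$. In Case~2 ($A=0$), we pass to $\xi_1\in X_1$ and use Step~1(ii). The strict transforms of $Y_2,\dots,Y_r$ are, up to a smooth factor, the chain $Y_1,\dots,Y_{r-1}$ for the generic $(m-1)\times(n-1)$ matrix $B$. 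The induction hypothesis applied to $I_{r-1}(B)$ gives the divisors $E_{\ell+1}$ with multiplicities $r-(\ell+1)+1$. The factor $t^r$ contributes $rE_1$; here $E_1=(t=0)$ does not involve the residual coordinates, so pulling it back creates no new components. Combining the two contributions gives the stated factorization.

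\textbf{Main obstacle.} The delicate point is the bookkeeping: we must check that the restriction of the global tower to each chart is precisely the residual tower times a smooth factor. This includes the identification of the strict transform $Y_r^{(r-1)}$ with $U_0\times\overline Y_{r-d}$ (respectively $\A^{m+n-1}\times\overline Y_{r-1}$). It follows from the fact that Step~1 identifies all the rank loci simultaneously through a single coordinate change, exactly as in the symmetric case.
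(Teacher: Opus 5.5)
Your proposal is correct and follows the paper's route. The paper's proof just invokes the induction of Proposition~\ref{prop:factorizing-GS}, using the rectangular local reduction recorded in Remark~\ref{rem:simultaneous-local-reduction-other-types}; your argument is exactly that induction, with the elimination $B=R-Q'P^{-1}Q$ (resp.\ $B=C-uv^t$ on the $x_{11}$-chart) and the independence of $t$ from the residual coordinates written out explicitly.
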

\begin{proof} By Remark~\ref{rem:simultaneous-local-reduction-other-types}, the local reduction of the generic determinantal ideals and the compatibility of the subsequent blow-up centers are identical to those used in the proof of Proposition~\ref{prop:factorizing-GS}, with $\Sym^2K^\vee$ replaced by \(\Hom(\ker A,\coker A)$ for an $m\times n$ matrix. The same induction therefore proves the factorization.
\end{proof}
Recall that $\codim_{S_0}(Y_r) = (m-r+1)(n-r+1).$
\begin{corollary}
\label{cor:generic-determinantal-principalization} Let $\pi^{(r-1)}:X_{r-1}\longrightarrow S_n$ be the embedded resolution above.
\begin{itemize}
\item If $(m-r+1)(n-r+1)>1,$ let $\pi_r:X_r = \Bl_{Y_r^{(r-1)}}(X_{r-1}) \longrightarrow X_{r-1}$ be the blow-up of the smooth strict transform $Y_r^{(r-1)}\subset X_{r-1},$ and let $E_r$ denote its exceptional divisor.
\item If $(m-r+1)(n-r+1)=1,$ then necessarily $m=n=r,$ so that $Y_r$ is the determinant hypersurface. In this case put $X_r:=X_{r-1},$ and $E_r:=Y_r^{(r-1)}.$
\end{itemize}
Then the composite morphism $\pi:X_r\longrightarrow S_0$ is a log-principalization of the determinantal ideal $I_r$, and
$$
I_r\OO_{X_r}=\OO_{X_r}\left(-\sum_{j=1}^{r}(r-j+1)E_j\right).
$$
\end{corollary}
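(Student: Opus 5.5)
The plan is to copy the argument of Corollary~\ref{cor:GS-principalization}, since Proposition~\ref{prop:factorizing-generic-determinantal} already gives
\[
I_r\OO_{X_{r-1}}=\OO_{X_{r-1}}(-D)\cdot I_{Y_r^{(r-1)}},\qquad D:=\sum_{j=1}^{r-1}(r-j+1)E_j .
\]

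\textbf{Case $(m-r+1)(n-r+1)>1$.} The first step is to identify $Y_r^{(r-1)}$ as a smooth centre of codimension $c=(m-r+1)(n-r+1)\ge 2$. The universal property of blowing up then gives $I_{Y_r^{(r-1)}}\OO_{X_r}=\OO_{X_r}(-E_r)$.

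Next I check that $Y_r^{(r-1)}$ lies in no $E_j$ with $j<r$. Its generic point lies over the open stratum $Y_r\setminus Y_{r-1}$, and all earlier blow-ups are isomorphisms over that stratum. Hence $\operatorname{ord}_{Y_r^{(r-1)}}E_j=0$, and the pullback $\pi_r^*D$ is just the sum of the strict transforms with the same coefficients. Pulling back the displayed factorization to $X_r$ yields the stated formula $I_r\OO_{X_r}=\OO_{X_r}\bigl(-\sum_{j=1}^r(r-j+1)E_j\bigr)$.

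For the simple normal crossings property I use the input of the standard resolution: $Y_r^{(r-1)}$ is smooth and meets the simple normal crossings divisor $E_1+\cdots+E_{r-1}$ with normal crossings. Blowing up such a centre keeps the total exceptional divisor $E_1+\cdots+E_r$ simple normal crossings.

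\textbf{Case $(m-r+1)(n-r+1)=1$.} Here $m=n=r$, and $Y_r^{(r-1)}$ is already a smooth divisor, so $I_{Y_r^{(r-1)}}=\OO(-E_r)$ directly. Normal crossings with the earlier $E_j$ is again part of the resolution statement.

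In both cases every centre lies over $Y_r$. So $\pi$ is proper, birational, and an isomorphism off $V(I_r)$, with $X_r$ smooth, which makes it a log-principalization.

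The main obstacle is the normal-crossings claim for the final centre, which is only cited from the literature. If a self-contained argument is wanted, I would run the same induction as in Proposition~\ref{prop:factorizing-GS}, using Remark~\ref{rem:simultaneous-local-reduction-other-types}.
\begin{itemize}
\item At a point of positive rank $d$, the tower locally becomes a smooth factor times the tower for $(m-d,n-d,r-d)$.
\item On a chart of the blow-up at the origin, $E_1=(t=0)$ and the rest of the tower is independent of $t$, reducing to $(m-1,n-1,r-1)$.
\end{itemize}
In both reductions transversality to the earlier divisors is inherited by induction, exactly as in the proof of Proposition~\ref{prop:cat-factorization}.
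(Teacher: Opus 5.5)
Your proposal is correct and follows the paper's proof, which simply repeats the argument of Corollary~\ref{cor:GS-principalization}. That argument pulls back the factorization of Proposition~\ref{prop:factorizing-generic-determinantal}, uses that the final centre is smooth, not contained in any earlier $E_j$, and meets them with normal crossings (cited from the standard resolution), and treats $m=n=r$ separately; your optional inductive check of the normal-crossings input is the same reduction the paper carries out in Proposition~\ref{prop:cat-factorization}.
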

\begin{proof} The proof is the same as that of Corollary \ref{cor:GS-principalization}.
\end{proof}
\begin{proposition} \label{prop: orddisfib} With the above setting,
\begin{itemize}
\item[1.]  $N_j=r-j+1$ and $\nu_j=(m-j+1)(n-j+1)$ for $1 \leq j \leq r$;
\item[2.] the fiber $F$ is the final strict transform of the exceptional divisor
created by the first blow-up of $Y_1=\{0\}.$
In particular, $F=E_1$. Consequently, $E_J^\circ\cap F=\varnothing$ unless $1\in J.$
\end{itemize}
\end{proposition}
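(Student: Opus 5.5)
We will mirror the arguments already given in the symmetric case, namely Lemma~\ref{lem:order}, Lemma~\ref{lem:nu} and Lemma~\ref{lem:fiber}, replacing the Schur-complement reduction of Lemma~\ref{lem:simultaneous-local-reduction} by its generic $m\times n$ analogue recorded in Remark~\ref{rem:simultaneous-local-reduction-other-types}.

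\textbf{Multiplicities.} The statement $N_j=r-j+1$ is read off directly from Corollary~\ref{cor:generic-determinantal-principalization}. There the total transform of $I_r$ is $\OO_{X_r}\bigl(-\sum_{j}(r-j+1)E_j\bigr)$, and the $E_j$ are the distinct prime components of the exceptional locus. One must check separately the hypersurface case $m=n=r$, where $E_r$ is the strict transform of $\{\det=0\}$ and $N_r=1=r-r+1$. This is immediate because the determinant is reduced.

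\textbf{Discrepancies.} For $\nu_j$ we repeat the argument of Lemma~\ref{lem:nu}. At the moment the strict transform $Y_j^{(j-1)}$ is blown up, its generic point lies over the open stratum $Y_j\setminus Y_{j-1}$ of matrices of exact rank $j-1$. All preceding centers lie over $Y_{j-1}$, so the earlier blow-ups are isomorphisms there. Hence the generic point of the center lies on no earlier $E_i$, and its codimension in $X_{j-1}$ equals $\codim_{S_0}Y_j=(m-j+1)(n-j+1)$. The standard formula $K_{\Bl_C X/X}=(\codim C-1)E$, recalled in the subsection on blowing up a smooth center, then gives $\nu_j=(m-j+1)(n-j+1)$, and subsequent blow-ups along centers not contained in $E_j$ do not change this coefficient. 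In the hypersurface case $j=r=m=n$, $E_r$ is a non-exceptional divisor, so $\nu_r=1=(m-r+1)(n-r+1)$, consistent with the formula.

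\textbf{The fiber.} For part 2 we follow Lemma~\ref{lem:fiber}. After $\pi_1=\Bl_0$, the fiber over $0$ is $E_1^{(1)}=\PP(\Hom(V,W))$. Each later center $Y_j^{(j-1)}$ with $j\ge2$ is not contained in the current transform of $E_1$, since it has a dense open part mapping isomorphically onto $Y_j\setminus Y_{j-1}$ and lying off the origin. Therefore, set-theoretically, the preimage of $E_1^{(j-1)}$ under $\pi_j$ is the closure of the preimage of the complement of the center, that is, the strict transform $E_1^{(j)}$. The key point is that the full preimage of $0$ contains no extra component $E_j$, $j\ge2$. The reason is that $\pi_j^{-1}(E_1^{(j-1)})$ is the union of $E_1^{(j)}$ with $E_j\cap\pi_j^{-1}(E_1^{(j-1)}\cap Y_j^{(j-1)})$, and the latter is a proper closed subset of $E_j$ contained in $E_1^{(j)}$. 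This uses that $E_1^{(j-1)}$ meets the smooth center transversally, which follows from the normal-crossing statement in the standard resolution, or directly from the local product structure $\A^1_t\times(\cdots)\times Y_{j-1}^{(m-1,n-1)}$ on charts of $\Bl_0$ given by the generic analogue of Lemma~\ref{lem:simultaneous-local-reduction}(ii). Induction on $j$ yields $F=E_1$, and then $E_J^\circ\cap F=E_J^\circ\cap E_1$ is empty when $1\notin J$.

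\textbf{Main obstacle.} The only delicate step is this set-theoretic identification of $\pi^{-1}(0)$ with $E_1$, which needs the transversality of $E_1$ to all later centers. We plan to obtain it from the chart description, in which $t$ is independent of all residual coordinates, exactly as in the proof of Proposition~\ref{prop:cat-factorization}.
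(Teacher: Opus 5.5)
Your proposal is correct and follows the paper's proof, which just says to repeat Lemmas~\ref{lem:order}, \ref{lem:nu} and \ref{lem:fiber} verbatim using the generic $m\times n$ analogue of the local reduction. Your extra justification of $\pi^{-1}(0)=E_1$ is fine, though transversality is more than you need. Each center is irreducible and not contained in the current transform of $E_1$, and the preimage of that Cartier divisor is pure of codimension one. So it has no component supported in the new irreducible exceptional divisor, and hence equals the strict transform.
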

\begin{proof}
One can repeat verbatim the proof of Lemma \ref{lem:order}, Lemma \ref{lem:nu} and Lemma \ref{lem:fiber}.
\end{proof}
Let $
J=\{1=j_0<j_1<\cdots<j_t=b\}
\subseteq\{1,\ldots,r\},$
and set $ d_\ell:=j_\ell-j_{\ell-1}$ and $ 1\leq\ell\leq t.$ Then $d_1+\cdots+d_t=b-1.$ Put $a:=n-b+1$ and $c:=m-b+1.$ Define the flag varieties
$$
\mathcal F_J^V
=
\left\{
V=K_0\supset K_1\supset\cdots\supset K_t:
\dim(K_{\ell-1}/K_\ell)=d_\ell
\right\}
$$
and
$$
\mathcal F_J^W
=
\left\{
0=I_0\subset I_1\subset\cdots\subset I_t\subset W:
\dim(I_\ell/I_{\ell-1})=d_\ell
\right\}.
$$
Set $ \mathcal F_J := \mathcal F_J^V\times\mathcal F_J^W.$ Over $\mathcal F_J$, let $\mathcal K_0\supset\cdots\supset\mathcal K_t$ and $0=\mathcal I_0\subset\cdots\subset\mathcal I_t$ denote the tautological flags. Set $\mathcal G_\ell:= \mathcal K_{\ell-1}/\mathcal K_\ell,$ and $\mathcal H_\ell := \mathcal I_\ell/\mathcal I_{\ell-1}.$
Note that both have rank $d_\ell$.

For $d\geq1$, define
$$P_d := \left\{ [A]\in\mathbf P(\Hom(\C^d,\C^d)): \rank A=d \right\}.
$$
Thus $P_d\simeq\PGL_d.$

For $a,c\geq1$ and $q\geq1$, define $$
U_{a,c,q} := \left\{ [A]\in\mathbf P(\Hom(\C^a,\C^c)):\rank A\geq q\right\}.
$$

\begin{proposition}\label{prop:generic-boundary-stratum}
The stratum $E_J^\circ\cap F$ is a Zariski locally trivial fibration over $\mathcal F_J$ with fiber $P_{d_1}\times\cdots\times P_{d_t}\times U_{a,c,r-b+1}.$
\end{proposition}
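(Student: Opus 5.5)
The plan is to transcribe the proof of Proposition~\ref{prop:strata} into the setting of $\Hom(V,W)$, replacing $\Sym^2K^\vee$ by $\Hom(K,W/I)$. The single new input is the generic local reduction recorded in Remark~\ref{rem:simultaneous-local-reduction-other-types}. At a matrix $A$ of rank $d$, choose splittings $V=L\oplus\ker A$ and $W=\operatorname{im}A\oplus C$. Write a nearby matrix in block form $\begin{pmatrix}P&Q\\ Q'&R\end{pmatrix}$ with $P$ invertible. Left and right multiplication by unipotent block matrices brings it to $\operatorname{diag}(P,\,R-Q'P^{-1}Q)$. The Schur complement $R-Q'P^{-1}Q$ is a generic matrix in $\Hom(\ker A,\coker A)$. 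This identifies the whole chain of rank loci, and hence by functoriality of blow-ups the whole remaining Bertram--Vainsencher tower, with the residual tower for $(m-d,n-d)$, up to a smooth factor. On the $x_{11}$-chart of $\Bl_0$ one gets $I_p=t^pI_{p-1}(B)$ exactly as in Lemma~\ref{lem:simultaneous-local-reduction}(ii). Every point of the exceptional divisor can be moved into such a chart by $\GL(V)\times\GL(W)$.

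\textbf{Step 1: the first selected stage.} A point $x\in E_J^\circ\cap F$ has projective direction $[A_1]\in\PP(\Hom(V,W))$. By Proposition~\ref{prop: orddisfib}, $F=E_1$. Since $x$ lies on $E_{j_1}$ and avoids $E_2,\dots,E_{j_1-1}$, the argument of Step~1 of Proposition~\ref{prop:strata} gives $\rank A_1=d_1$. Set $K_1:=\ker A_1$ and $I_1:=\operatorname{im}A_1$. Then $A_1$ induces an isomorphism $V/K_1\to I_1$, determined up to a scalar. This scaled isomorphism is a point of $P_{d_1}$ once trivializations of $V/K_1$ and $I_1$ are chosen.

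\textbf{Step 2: iteration.} The residual problem lives in $\Hom(K_1,W/I_1)$ with parameters $(n-d_1,m-d_1,r-d_1)$. Iterating, the $\ell$-th selected stage produces $[A_\ell]\in\PP(\Hom(K_{\ell-1},W/I_{\ell-1}))$ of exact rank $d_\ell$. Its kernel $K_\ell\subset K_{\ell-1}$ and its image, whose preimage in $W$ we call $I_\ell\supset I_{\ell-1}$, are determined by it. The form $A_\ell$ induces a projective isomorphism $\mathcal G_\ell\to\mathcal H_\ell$.

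\textbf{Step 3: the residual factor.} After the last selected stage $b$, the residual datum is $[A_\infty]\in\PP(\Hom(K_t,W/I_t))$, where $\dim K_t=a$ and $\dim W/I_t=c$. Avoiding $E_{b+1},\dots,E_r$ is equivalent to $\rank A_\infty\ge r-b+1$. When $b=r$ one uses instead the final blow-up, or the retained strict transform in the hypersurface case $m=n=r$, where $U_{1,1,1}$ is a point. This gives the factor $U_{a,c,r-b+1}$.

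\textbf{Step 4: the inverse map and local triviality.} Conversely, the data reconstruct a unique point of the stratum, through the residual identifications exactly as in Step~5 of Proposition~\ref{prop:strata}. Both maps are regular on the Schur-complement charts, and they glue because they depend only on the kernel and image flags and the induced maps. This gives an isomorphism of $E_J^\circ\cap F$ with a fiber product over $\mathcal F_J=\mathcal F_J^V\times\mathcal F_J^W$ of the bundles $\PP(\mathrm{Isom}(\mathcal G_\ell,\mathcal H_\ell))$ and the open subbundle of $\PP(\Hom(\mathcal K_t,W/\mathcal I_t))$ where the rank is at least $r-b+1$. On a Zariski open set of $\mathcal F_J$ trivializing all these tautological bundles, this fiber product becomes $P_{d_1}\times\cdots\times P_{d_t}\times U_{a,c,r-b+1}$.

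The main obstacle is the canonical bookkeeping in Step~2: showing that the residual target space is intrinsically $W/I_{\ell-1}$ rather than a chosen complement, so that the image flag is well defined and varies algebraically. I would handle this by noting that the normal space to the rank-$d$ stratum at $A$ is canonically $\Hom(\ker A,\coker A)$, up to a line twist after projectivizing, and that the unipotent Schur transformations act trivially on this quotient.
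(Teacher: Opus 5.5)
Your proposal is correct and follows essentially the same route as the paper, whose proof simply transcribes that of Proposition~\ref{prop:strata} with the residual space $\Sym^2K^\vee$ replaced by $\Hom(\ker A,\coker A)$. Your explicit bookkeeping of the image flag through $W/I_{\ell-1}$ and the canonical normal space $\Hom(\ker A,\coker A)$ only spells out what the paper leaves implicit.
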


\begin{proof} The proof is the same as that of \ref{prop:strata} with the residual symmetric space \(\Sym^2K^\vee\) replaced by $\Hom(\ker A,\coker A)$ for $A$ an $m\times n$ matrix.
\end{proof}

Now, we first compute the Euler characteristic of the flag base.

\begin{lemma}\label{lem:generic-flag-euler}
One has
$$
\chi(\mathcal F_J)
=
\frac{m!n!}
{(m-b+1)!(n-b+1)!
(d_1!\cdots d_t!)^2}.
$$
\end{lemma}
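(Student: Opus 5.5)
Since $\mathcal F_J=\mathcal F_J^V\times\mathcal F_J^W$ is a product of complex algebraic varieties, the Euler characteristic is multiplicative: $\chi(\mathcal F_J)=\chi(\mathcal F_J^V)\,\chi(\mathcal F_J^W)$. The plan is to compute each factor exactly as in Lemma~\ref{lem:flagchi}, by identifying it with a partial flag variety and counting Schubert cells, or equivalently torus-fixed points.

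\emph{The factor $\mathcal F_J^V$.} This is literally the kernel flag variety $\mathcal F_J$ of Lemma~\ref{lem:flagchi}, with $V$ of dimension $n$ and last piece $K_t$ of dimension $n-(d_1+\cdots+d_t)=n-b+1=a$. Lemma~\ref{lem:flagchi} therefore gives
$$
\chi(\mathcal F_J^V)=\frac{n!}{(n-b+1)!\,d_1!\cdots d_t!}.
$$

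\emph{The factor $\mathcal F_J^W$.} The increasing flag $0=I_0\subset I_1\subset\cdots\subset I_t\subset W$ has successive quotient dimensions $d_1,\dots,d_t$ and last quotient $W/I_t$ of dimension $m-b+1=c$. Taking annihilators $I_\ell\mapsto I_\ell^\perp\subset W^\vee$ turns it into a decreasing flag $W^\vee=I_0^\perp\supset I_1^\perp\supset\cdots\supset I_t^\perp$ with the same quotient dimensions. This is an algebraic isomorphism, so $\mathcal F_J^W$ is a partial flag variety of type $(d_1,\dots,d_t,c)$ in dimension $m$. One can also argue directly: fix a maximal torus of $\GL(W)$; its fixed points are the coordinate flags, indexed by $\mathfrak S_m/(\mathfrak S_{d_1}\times\cdots\times\mathfrak S_{d_t}\times\mathfrak S_c)$. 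Using $\chi(X)=\chi(X^T)$ as in Lemma~\ref{lem:Qchi}, or the Schubert decomposition of \cite{Bri05}, we get
$$
\chi(\mathcal F_J^W)=\frac{m!}{(m-b+1)!\,d_1!\cdots d_t!}.
$$

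Multiplying the two factors gives the stated formula, with $(d_1!\cdots d_t!)^2$ in the denominator. The only point needing care is the reindexing of the increasing flag in $W$ as a standard partial flag with the correct quotient dimensions. Otherwise the argument is routine.
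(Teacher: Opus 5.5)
Your proof is correct and matches the paper's argument: the paper likewise writes $\chi(\mathcal F_J)=\chi(\mathcal F_J^V)\,\chi(\mathcal F_J^W)$ and evaluates each factor by the Schubert-cell count of Lemma~\ref{lem:flagchi}. Your explicit check that the increasing flag in $W$ is a partial flag variety of type $(d_1,\dots,d_t,m-b+1)$ supplies a detail the paper leaves implicit.
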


\begin{proof}
It follows from the same argument as in Lemma \ref{lem:flagchi} and the fact that $\chi(F_j)=\chi(F_j^V)\cdot \chi(F_j^W)$.
\end{proof}

We next compute the Euler characteristic of the nondegenerate projective
matrix factor.

\begin{lemma}\label{lem:pgl-euler}
For $d\geq1$,
\[
\chi(P_d)
=
\begin{cases}
1,&d=1,\\
0,&d\geq2.
\end{cases}
\]
\end{lemma}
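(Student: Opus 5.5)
We compute $\chi(P_d)$ by the torus fixed-point method of Lemma~\ref{lem:Qchi}, and confirm the answer independently through the group structure of $\PGL_d$.

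For $d=1$, $\mathbf P(\Hom(\C^1,\C^1))=\PP^0$ is a single point, and its unique element has rank $1$. Hence $P_1$ is a point and $\chi(P_1)=1$.

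For $d\geq2$, let $T=(\C^\times)^d\times(\C^\times)^d$ act on $\Hom(\C^d,\C^d)$ by $(s,t)\cdot A=\diag(s)\,A\,\diag(t)^{-1}$. This action preserves rank, so it induces an action on $P_d$. The weights on the matrix entries $E_{ij}$ are $e_i-f_j$, and these are pairwise distinct. Therefore the $T$-fixed points of $\mathbf P(\Hom(\C^d,\C^d))$ are exactly the coordinate points $[E_{ij}]$. Each of these has rank $1<d$, so $P_d^T=\varnothing$. Applying $\chi(X)=\chi(X^T)$ as in the proof of Lemma~\ref{lem:Qchi}, we get $\chi(P_d)=0$.

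As a cross-check, $P_d\simeq\PGL_d$ contains the maximal torus of $\PGL_d$, which has positive dimension when $d\geq2$. This torus acts freely by left multiplication, and every orbit of a free action of a positive-dimensional torus has Euler characteristic $0$. This again gives $\chi(P_d)=0$.

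The only step that needs care is verifying that the weights $e_i-f_j$ are pairwise distinct. This holds because the $e_i$ and $f_j$ are independent characters of $T$, and it guarantees that the fixed points are isolated coordinate points. The rest of the argument is immediate.
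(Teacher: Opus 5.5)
Your proof is correct and is essentially the paper's torus fixed-point argument. The paper uses only the left diagonal torus $(\C^\times)^d$, whose fixed points in $\mathbf P(\Hom(\C^d,\C^d))$ are projectivized single-row matrices of rank at most $1$; your two-sided torus with distinct weights $e_i-f_j$ makes the fixed points the isolated coordinate points $[E_{ij}]$, which is only an inessential refinement. Your $\PGL_d$ cross-check is not needed, and as written it only shows that the orbits have Euler characteristic $0$. To conclude $\chi(P_d)=0$ from it, you would still need $\chi(X)=\chi(X^T)$ with $X^T=\varnothing$, or a fibration argument.
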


\begin{proof}
For $d=1$, the variety $P_1$ consists of a single point. Assume $d\geq2$. Let $ T=(\C^\times)^d$ act on $\Mat_{d\times d}$ by left multiplication by diagonal matrices:
$$
(t_1,\ldots,t_d)\cdot A
=
\diag(t_1,\ldots,t_d)A.
$$
This induces an action on $\mathbf P(\Mat_{d\times d}).
$

A $T$-fixed projective point must lie in a weight space. The weight
spaces consist of matrices supported in a single row. Every nonzero
matrix in such a weight space has rank at most $1$. Hence, for $d\geq2$,
the open subset $P_d$ of invertible matrices contains no $T$-fixed
points: $P_d^T=\varnothing.$
For a complex algebraic variety with an algebraic torus action, $\chi(X)=\chi(X^T).$
Therefore, $\chi(P_d)=0.$
\end{proof}

Finally, consider $$
R_{a,c,q}:=\left\{[A]\in\mathbf P(\Hom(\C^a,\C^c)):\rank A=q\right\}.$$
for $1\leq q\leq\min\{a,c\}$.
\begin{lemma}\label{lem:rectangular-rank-euler}
One has
$$
\chi(R_{a,c,q})=\binom aq\binom cq\,\chi(P_q).
$$
Consequently,
$$
\chi(U_{a,c,p})=
\begin{cases}
ac,&p=1,\\
0,&p\geq2.
\end{cases}
$$
\end{lemma}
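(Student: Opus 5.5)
The plan follows the proof of Lemma~\ref{lem:rankchi} essentially verbatim, with the symmetric data replaced by rectangular data. First I establish the fibration structure of $R_{a,c,q}$. A projective matrix $[A]\in\mathbf P(\Hom(\C^a,\C^c))$ of exact rank $q$ is determined by three pieces of data:
\begin{enumerate}
\item its kernel $K=\ker A\in\Gr(a-q,a)$;
\item its image $I=\operatorname{im}A\in\Gr(q,c)$;
\item the induced isomorphism $\C^a/K\to I$, up to scalar.
\end{enumerate}
This gives a morphism $R_{a,c,q}\to\Gr(a-q,a)\times\Gr(q,c)$. I will check that it is Zariski locally trivial with fiber $P_q$. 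Over the standard open cells, where the tautological bundles $\C^a/\mathcal K$ and $\mathcal I$ are trivialized, the fiber over $(K,I)$ is identified with $\mathbf P(\Hom(\C^a/K,I))_{\text{invertible}}\simeq P_q$. These identifications vary algebraically, and the inverse map sends $(K,I,[\phi])$ to the composite $\C^a\to\C^a/K\xrightarrow{\phi}I\hookrightarrow\C^c$.

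Next I apply multiplicativity of the Euler characteristic for Zariski locally trivial fibrations. Combined with $\chi(\Gr(a-q,a))=\binom{a}{q}$ and $\chi(\Gr(q,c))=\binom cq$ (Schubert cells, as in Lemma~\ref{lem:flagchi}), this gives
\[
\chi(R_{a,c,q})=\binom aq\binom cq\,\chi(P_q).
\]

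For the consequence, I use Lemma~\ref{lem:pgl-euler}. It gives $\chi(R_{a,c,1})=ac$ and $\chi(R_{a,c,q})=0$ for $q\ge2$. Since $U_{a,c,p}=\bigsqcup_{q=p}^{\min\{a,c\}}R_{a,c,q}$ is a disjoint union of locally closed strata, additivity of the compactly supported Euler characteristic yields $\chi(U_{a,c,1})=ac$ and $\chi(U_{a,c,p})=0$ for $p\ge2$.

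As an independent check, $U_{a,c,1}=\mathbf P^{ac-1}$, whose Euler characteristic is $ac$. Alternatively, a torus fixed-point count with $T=(\C^\times)^a\times(\C^\times)^c$ acting on $\mathbf P(\Hom(\C^a,\C^c))$ has fixed points exactly the $ac$ elementary matrices, all of rank one. This reproves both statements at once.

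The only mildly delicate point is the algebraic local triviality of $R_{a,c,q}\to\Gr\times\Gr$. It is handled by the explicit trivialization over products of big cells described above. Alternatively, one can bypass it entirely via the torus fixed-point argument, since the $T$-action preserves each $R_{a,c,q}$ and the fixed points of rank $q$ exist only for $q=1$.
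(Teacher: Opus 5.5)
Your proposal is correct and follows the paper's proof: you use the same kernel/image/projective-isomorphism fibration $R_{a,c,q}\to\Gr(a-q,a)\times\Gr(q,c)$ with fiber $P_q$, multiplicativity of $\chi$, Lemma~\ref{lem:pgl-euler}, and additivity over the rank stratification. Your supplementary torus fixed-point count is also valid: the fixed points are the $ac$ elementary matrices, all of rank one, so it gives the same values without needing the local-triviality check.
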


\begin{proof}
A rank-$q$ linear map $A:\C^a\longrightarrow\C^c$ is determined by:
\begin{enumerate}
\item its kernel $K\in\Gr(a-q,a)$;
\item its image $I\in\Gr(q,c);$
\item a projective isomorphism $\C^a/K\xrightarrow{\sim}I.$
\end{enumerate}
Therefore, $ R_{a,c,q}
\longrightarrow
\Gr(a-q,a)\times\Gr(q,c)
$ is a Zariski locally trivial fibration with fiber $P_q$. Hence
$$
\chi(R_{a,c,q})
=
\chi(\Gr(a-q,a))
\chi(\Gr(q,c))
\chi(P_q)=\binom aq\binom cq\chi(P_q)=\begin{cases}
ac,&q=1,\\
0,&q\geq2.
\end{cases}.
$$
where the last equality follows from Lemma~\ref{lem:pgl-euler}.
Since $ U_{a,c,p}=\bigsqcup_{q=p}^{\min\{a,c\}}R_{a,c,q}$,
the additivity of the  Euler characteristic gives
\[
\chi(U_{a,c,p})
=
\begin{cases}
ac,&p=1,\\
0,&p\geq2.
\end{cases}
\] which ends the verification.
\end{proof}
\begin{proof}[Proof of Theorem~\ref{thm:generic-determinantal-zeta}]
Combining Proposition~\ref{prop:generic-boundary-stratum} with the
preceding Euler-characteristic computations gives
\[
\chi(E_J^\circ\cap F)
=
\chi(\mathcal F_J)
\left(
\prod_{\ell=1}^t\chi(P_{d_\ell})
\right)
\chi(U_{a,c,r-b+1}).
\]
By Lemma~\ref{lem:pgl-euler}, this vanishes unless $ d_\ell=1$
for every $\ell.$ By Lemma~\ref{lem:rectangular-rank-euler}, it also vanishes unless $ r-b+1=1,$ that is, $b=r.$ As a matter of fact, the only possible index set is $J=\{1,2,\ldots,r\}$ and thus
$$
\begin{aligned}
\chi(E_{\{1,\ldots,r\}}^\circ\cap F)
&=
\frac{n!}{(n-r+1)!}
\frac{m!}{(m-r+1)!}
(n-r+1)(m-r+1)
\\
&= \frac{m!n!}{(m-r)!(n-r)!}.
\end{aligned}
$$

By definition,
$$
Z^{\mathrm{top}}_{I_r,0}(s)
=
\sum_{\varnothing\neq J\subseteq\{1,\ldots,r\}}
\chi(E_J^\circ\cap\pi^{-1}(0))
\prod_{j\in J}\frac{1}{N_js+\nu_j}.
$$
By Proposition~\ref{prop: orddisfib}, only subsets containing $1$
can contribute. The preceding computation shows in fact that every
stratum has Euler characteristic zero except $J=\{1,\ldots,r\}.$
Therefore
\[
Z^{\mathrm{top}}_{I_r,0}(s)
=
\frac{\displaystyle
\frac{m!n!}{(m-r)!(n-r)!}}
{\displaystyle
\prod_{j=1}^rD_j(s)}=\prod_{j=1}^r
\frac{1}{
1+
\displaystyle
\frac{r-j+1}{(m-j+1)(n-j+1)}s
}
\] as desired.
\end{proof}

\section{The generic skew-symmetric determinantal case}
\label{sec:skew-symmetric-case}

Let $V$ be a complex vector space of dimension $n$, and set $ S_0:=\wedgeTwo V^\vee.$ After choosing a basis of $V$, the affine space $S_0$ is identified with the space of generic skew-symmetric $n\times n$ matrices.

Fix $1\leq r\leq \left\lfloor\frac n2\right\rfloor.$ For $1\leq j\leq r$, let $I_j$ denote the ideal generated by the Pfaffians of size $2j$ of the generic skew-symmetric matrix. Its zero locus is
$$ Y_j := \left\{\omega\in\wedgeTwo V^\vee:\rank(\omega)\leq 2j-2\right\}.$$
Thus $$ Y_1=\{0\} \subset Y_2 \subset\cdots\subset Y_r.$$
Define also
$$ D_j(s):=(r-j+1)s +\binom{n-2j+2}{2}.
$$
\begin{theorem}
\label{thm:skew-topological-zeta}
The local topological zeta function of the Pfaffian ideal $I_r$ at the
origin is 
$$\Ztop_{I_r,0}(s)=\prod_{j=1}^r\frac{1}{1+\displaystyle
\frac{r-j+1}{\binom{n-2j+2}{2}}\,s}.
$$ 
\end{theorem}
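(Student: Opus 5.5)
We follow exactly the scheme used for the generic determinantal case, now with the residual space $\wedgeTwo K^\vee$ in place of $\Hom(\ker A,\coker A)$.

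\textbf{Resolution and numerical data.} First we take the tower obtained by successively blowing up the strict transforms of $Y_1,\dots,Y_{r-1}$, and then blow up the smooth strict transform of $Y_r$. The last blow-up is omitted when $n=2r$ or $n=2r+1$; in that case $Y_r$ is the Pfaffian hypersurface for $n=2r$, and for $n=2r+1$ one has $\codim Y_r=\binom32=3$, so we still blow up. The local reduction recorded in Remark~\ref{rem:simultaneous-local-reduction-other-types} replaces Lemma~\ref{lem:simultaneous-local-reduction}. At a point of rank $2d$ one uses a skew Schur complement $C+B^tA^{-1}B$, which reduces $(n,p)$ to $(n-2d,p-d)$. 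On the $x_{12}$-chart of $\Bl_0$, the pulled-back Pfaffians factor as $I_p\mathcal O=t^pI_{p-1}(B)$, with residual size $n-2$. With these two facts, the induction of Proposition~\ref{prop:factorizing-GS} and Corollary~\ref{cor:GS-principalization} runs verbatim and gives a log-principalization with $N_j=r-j+1$. The argument of Lemma~\ref{lem:nu} gives $\nu_j=\codim Y_j=\binom{n-2j+2}{2}$, so $N_js+\nu_j=D_j(s)$. As in Lemma~\ref{lem:fiber}, $F=E_1$.

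\textbf{Strata.} For $J=\{1=j_0<\dots<j_t=b\}$, the argument of Proposition~\ref{prop:strata} shows that $E_J^\circ\cap F$ fibres Zariski-locally trivially over the flag variety of $V=K_0\supset\cdots\supset K_t$ with $\dim K_{\ell-1}/K_\ell=2d_\ell$. The fibre is
\[
\Omega_{2d_1}\times\cdots\times\Omega_{2d_t}\times U^{\wedge}_{n-2b+2,\,r-b+1},
\]
where $\Omega_{2d}\subset\PP(\wedgeTwo\C^{2d\,\vee})$ is the locus of nondegenerate projective forms, and $U^\wedge_{m,a}$ is the locus of projective forms of rank $\ge 2a$.

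\textbf{Euler characteristics.} We let the torus $T=(\C^\times)^{2d}$ act diagonally. Its fixed points in $\PP(\wedgeTwo)$ are the $x_i\wedge x_j$, all of rank $2$. Hence $\chi(\Omega_2)=1$ and $\chi(\Omega_{2d})=0$ for $d\ge2$. In the same way, the rank-$2q$ locus in $\PP(\wedgeTwo\C^{m\vee})$ has $\chi=\binom{m}{2}$ for $q=1$ and $\chi=0$ for $q\geq2$. Consequently $\chi(U^\wedge_{m,1})=\binom m2$ and $\chi(U^\wedge_{m,a})=0$ for $a\ge2$. Only $J=\{1,\dots,r\}$ survives, and the flag has type $(2,\dots,2,n-2r+2)$. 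Its Euler characteristic is therefore
\[
\frac{n!}{2^{r-1}(n-2r+2)!},
\]
and multiplying by $\binom{n-2r+2}2$ gives
\[
\chi=\frac{n!}{2^r(n-2r)!}=\prod_{j=1}^r\binom{n-2j+2}{2}.
\]
Dividing by $\prod_j D_j(s)$ yields the stated product. In the hypersurface case $n=2r$ the last factor is a point with $\binom22=1$, so the same count holds.

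\textbf{Main obstacle.} The main point to check is the skew local reduction on the blow-up chart. One has to verify that after setting $x_{12}=t$, $x_{1i}=tu_i$, $x_{2i}=tv_i$, the residual $(n-2)\times(n-2)$ block becomes a generic skew matrix in new coordinates. This holds via the skew Schur complement with the invertible $2\times2$ block $\begin{pmatrix}0&1\\-1&0\end{pmatrix}$. One also has to check that this chart covers the blow-up, by $\GL(V)$-translation, since any nonzero form has some $\omega(v,w)\ne0$. Once this is in place, everything else is a transcription of the symmetric and generic arguments.
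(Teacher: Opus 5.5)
Your proposal is correct and takes essentially the same route as the paper. Both use the Gaube--Schober tower, turned into a log-principalization via the skew Schur-complement local reduction, with numerical data $N_j=r-j+1$, $\nu_j=\binom{n-2j+2}{2}$ and $F=E_1$; both fibre $E_J^\circ\cap F$ over kernel flags with nondegenerate alternating factors, and both use torus-fixed-point Euler characteristics to kill every stratum except $J=\{1,\dots,r\}$, whose Euler characteristic $n!/(2^r(n-2r)!)=\prod_j\binom{n-2j+2}{2}$ gives the product. The only blemish is your self-contradictory sentence about the final blow-up, which should be omitted exactly when $n=2r$, as you then go on to say.
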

\begin{remark}
Our formula differs from Chen-Shi-Xu-Zuo's one by a factor $2^{-\left\lfloor\frac n2\right\rfloor}$ (cf. \cite[\S3.8]{CSXZ25})
\end{remark}
We use the Gaube--Schober resolution of the reduced skew-symmetric
determinantal loci (see \cite[Theorem A]{GS24}). Put $X_0:=S_0.$ For $1\leq j\leq r-1$, let $
\pi_j:X_j\longrightarrow X_{j-1}$ be the blow-up of the strict transform $Y_j^{(j-1)}$ of $Y_j$ in
$X_{j-1}$. After these blow-ups the strict transform
$$
Y_r^{(r-1)}\subset X_{r-1}
$$ of the reduced variety $Y_r$ is smooth and meets the exceptional divisor with simple normal crossings. Note that $$
\codim_{S_0}(Y_r)=\binom{n-2r+2}{2}.
$$ Let $\pi^{(r-1)} := \pi_1\circ\cdots\circ\pi_{r-1} :X_{r-1}\longrightarrow S_0$ be the resulting morphism, and let \(E_j\subset X_{r-1}\) denote the final strict transform of the exceptional divisor created by $\pi_j$.
\begin{proposition}
\label{prop:factorizing-skew-symmetric}
The total transform of the Pfaffian ideal \(I_r\) on
\(X_{r-1}\) admits the factorization
$$
I_r\OO_{X_{r-1}}=\OO_{X_{r-1}}\left(-\sum_{j=1}^{r-1}(r-j+1)E_j\right)\cdot I_{Y_r^{(r-1)}}.
$$
Here \(I_{Y_r^{(r-1)}}\) denotes the ideal sheaf of the strict transform $ Y_r^{(r-1)}\subset X_{r-1}.$
\end{proposition}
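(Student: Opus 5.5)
The plan is to mimic the proof of Proposition~\ref{prop:factorizing-GS}, arguing by induction on $r$ simultaneously for all $n$. We replace the symmetric Schur-complement reduction by its skew-symmetric analogue, recorded in Remark~\ref{rem:simultaneous-local-reduction-other-types}. For $r=1$ there is nothing to blow up, and $I_1=I_{Y_1}$ is the maximal ideal of the origin, so there is nothing to prove. For $r\geq 2$, we fix $\xi\in X_{r-1}$ with image $A\in S_0$. We may assume $A\in Y_r$, so that $\rank A=2d$ with $d\leq r-1$.

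\textbf{Case $d\geq1$.} After a change of basis, the upper-left $2d\times 2d$ block $P$ of the generic skew matrix is invertible near $A$. We write
\[
M=\begin{pmatrix}P&Q\\-Q^t&R\end{pmatrix}, \qquad B:=R+Q^tP^{-1}Q .
\]
Here $B$ is again skew-symmetric, and $(P,Q,R)\mapsto(P,Q,B)$ is an isomorphism on $\{\det P\neq0\}$. The congruence $M\sim\operatorname{diag}(P,B)$ by unipotent matrices does not change the Pfaffian ideals. Since the $2p$-Pfaffians of $\operatorname{diag}(P,B)$ generate $\sum_{i+j=p}\mathrm{Pf}_{2i}(P)\mathrm{Pf}_{2j}(B)$, and $\mathrm{Pf}(P)$ is a unit, we get
\[
I_p\OO_U = \begin{cases} \OO_U, & p\le d,\\ I_{p-d}(B)\OO_U, & p>d. \end{cases}
\]
I expect this ideal identity to be the step needing the most care: one must check that the Pfaffian ideal of a block-diagonal skew matrix splits in this way. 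I would prove it either via the Laplace-type expansion of Pfaffians, or by noting that the radicals agree and using that both ideals are prime. I prefer the expansion argument, since it gives equality of ideals directly.

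Given this identity, the centers $Y_1,\dots,Y_d$ miss $U$, and the remaining tower is $U_0\times$ the Gaube--Schober tower for $(n-2d,r-d)$. Induction then gives the factorization with multiplicity $r-d-\ell+1=r-j+1$ on $E_j$, where $j=\ell+d$.

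\textbf{Case $A=0$.} On $X_1=\Bl_0S_0$, every point lies (after the $\GL(V)$-action, which preserves all $Y_j$ and lifts to the tower) in the $x_{12}$-chart. Indeed, every nonzero skew form has a pair $v,w$ with $\omega(v,w)\neq0$. On this chart we set $t=x_{12}$ and $x_{ij}=tc_{ij}$. Then $\sigma^*M=tM'$, where $M'$ has upper-left block $\begin{pmatrix}0&1\\-1&0\end{pmatrix}$. Applying the $d=1$ reduction to $M'$ gives a residual generic skew matrix $B$ of size $n-2$, and since $2p$-Pfaffians are homogeneous of degree $p$,
\[
I_p\OO=t^pI_{p-1}(B), \qquad E_1=(t=0).
\]
The strict transforms of $Y_2,\dots,Y_{r-1}$ are then products of a smooth factor with $Y_1^{(n-2)},\dots,Y_{r-2}^{(n-2)}$. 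The subsequent blow-ups are the product of this smooth factor with the residual tower for $(n-2,r-1)$, as in Remark~\ref{rem:simultaneous-local-reduction-other-types}.

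The induction hypothesis gives multiplicity $(r-1)-\ell+1$ on $\overline E_\ell=E_{\ell+1}$. Since $t$ is independent of the later centers, the factor $t^r$ contributes $rE_1$ and no further components. This yields the claimed factorization near $\xi$. Because the identity of ideal sheaves is local, the two cases together finish the proof.
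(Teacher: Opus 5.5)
Your proposal is correct and is essentially the paper's proof: the paper only says the argument is the same as for Proposition~\ref{prop:factorizing-GS}, namely induction on $r$ using the skew-symmetric local reductions $(n,p)\mapsto(n-2d,p-d)$ and $(n,p)\mapsto(n-2,p-1)$ recorded in Remark~\ref{rem:simultaneous-local-reduction-other-types}. You have written out the skew details that the paper leaves implicit: the skew Schur complement $B=R+Q^tP^{-1}Q$, the $x_{12}$-chart, and the block-diagonal Pfaffian ideal (collapsing $\sum_{i+j=p}$ to $I_{p-d}(B)$ also uses the nesting $I_{j+1}(B)\subseteq I_j(B)$).
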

\begin{proof}
The proof is the same as that of Proposition \ref{prop:factorizing-GS}.
\end{proof}

\begin{corollary}
\label{cor:skew-symmetric-principalization} Let $\pi^{(r-1)}:X_{r-1}\longrightarrow S_n$ be the Gaube--Schober embedded resolution of reduced skew-symmetric determinantal loci above. 
\begin{itemize}
\item If $\codim_{S_0}(Y_r)>1,$ let $ \pi_r:X_r=\Bl_{Y_r^{(r-1)}}(X_{r-1})\longrightarrow X_{r-1} $
be the blow-up of the smooth strict transform $Y_r^{(r-1)}\subset X_{r-1},$ and let $E_r$ denote its exceptional divisor.

\item If $\codim_{S_0}(Y_r)=1,$ then necessarily $n=2r,$ and $Y_r$ is the hypersurface defined by the maximal Pfaffian. In
this case put $X_r:=X_{r-1},\;E_r:=Y_r^{(r-1)}.$

\end{itemize}
Then the composite morphism $\pi:X_r\longrightarrow S_0$ is a log-principalization of the Pfaffian ideal $I_r$, and
$$
I_r\OO_{X_r}=
\OO_{X_r}
\left(
-\sum_{j=1}^{r}(r-j+1)E_j
\right).
$$
\end{corollary}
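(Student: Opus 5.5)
The argument will follow the proof of Corollary~\ref{cor:GS-principalization} line by line, using Proposition~\ref{prop:factorizing-skew-symmetric} in place of Proposition~\ref{prop:factorizing-GS}. By that proposition,
\[
I_r\OO_{X_{r-1}}=\OO_{X_{r-1}}(-D)\cdot I_{Y_r^{(r-1)}},\qquad D:=\sum_{j=1}^{r-1}(r-j+1)E_j .
\]
We treat the two cases separately.

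\textbf{Case $\codim_{S_0}Y_r>1$.} Blowing up the smooth center $Y_r^{(r-1)}$ makes its ideal invertible, so $I_{Y_r^{(r-1)}}\OO_{X_r}=\OO_{X_r}(-E_r)$. Next we check that no $E_j$ with $j<r$ contains this center. The generic point of $Y_r^{(r-1)}$ lies over the open stratum $Y_r\setminus Y_{r-1}$ of forms of rank exactly $2r-2$. That stratum is smooth, being an orbit of $\GL(V)$, and all earlier centers lie over $Y_{r-1}$, so the earlier blow-ups are isomorphisms over it. Hence $\operatorname{ord}_{Y_r^{(r-1)}}E_j=0$, and $\pi_r^*D$ is the same combination of the strict transforms. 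Pulling the factorization back along $\pi_r$ therefore gives the stated formula.

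For the remaining conditions of a log-principalization:
\begin{itemize}
\item Simple normal crossings of $E_1+\cdots+E_r$ follows from \cite[Theorem A]{GS24}: $Y_r^{(r-1)}$ is smooth and has normal crossings with $E_1+\cdots+E_{r-1}$, and blowing up such a center preserves simple normal crossings.
\item $X_r$ is smooth as an iterated blow-up of smooth centers.
\item All centers lie over $Y_r$, so $\pi$ is an isomorphism over $S_0\setminus Y_r$.
\end{itemize}

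\textbf{Case $\codim_{S_0}Y_r=1$.} From $\binom{n-2r+2}{2}=1$ we get $n-2r+2=2$, i.e.\ $n=2r$, so $I_r$ is generated by the maximal Pfaffian. The strict transform $Y_r^{(r-1)}$ is then a smooth divisor, and its ideal sheaf is $\OO(-E_r)$ with $E_r:=Y_r^{(r-1)}$. The factorization is already the stated one, and simple normal crossings is exactly the Gaube--Schober transversality statement.

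The only point needing care is that the Gaube--Schober output is snc and not merely normal crossings; the paper's convention already asserts simple normal crossings here. No other obstacle is expected.
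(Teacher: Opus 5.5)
Your proposal is correct and takes the same approach as the paper, whose proof simply refers back to Corollary~\ref{cor:GS-principalization}. That argument pulls back the factorization of Proposition~\ref{prop:factorizing-skew-symmetric}, notes that the smooth center $Y_r^{(r-1)}$ lies in no earlier $E_j$ because its generic point is over the open stratum $Y_r\setminus Y_{r-1}$, invokes the Gaube--Schober simple normal crossings statement, and treats the hypersurface case $n=2r$ as immediate.
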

\begin{proof}
The proof is the same as that of Corollary \ref{cor:GS-principalization}.
\end{proof}
As in the symmetric and the generic case, we have the following similar results.
\begin{lemma}
\label{lem:skew-multiplicity}
\begin{itemize}
\item[1.] For every $1\leq j\leq r$, $N_j=r-j+1$ and $\nu_j=\binom{n-2j+2}{2}$.
\item[2.] The fiber $F$ is the final strict transform of the exceptional divisor created by the first blow-up of $Y_1=\{0\}$. In particular, $F=E_1.$ Consequently, 
$E_J^\circ\cap F=\varnothing$ unless $1\in J.$
\end{itemize}
\end{lemma}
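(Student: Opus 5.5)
The plan is to mirror Lemmas~\ref{lem:order}, \ref{lem:nu} and \ref{lem:fiber} from the symmetric case, replacing the symmetric rank data by Pfaffian data throughout.

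\textbf{The multiplicities $N_j$.} By Corollary~\ref{cor:skew-symmetric-principalization}, the total transform of $I_r$ on $X_r$ is the invertible sheaf $\OO_{X_r}\bigl(-\sum_{j=1}^r(r-j+1)E_j\bigr)$. Reading off the coefficient of $E_j$ gives $N_j=r-j+1$ at once.

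\textbf{The log discrepancies $\nu_j$.} Consider the stage where the strict transform $Y_j^{(j-1)}$ is blown up.
\begin{itemize}
\item Its generic point lies over the open stratum $Y_j\setminus Y_{j-1}$, where the forms have rank exactly $2j-2$.
\item All earlier centers lie over $Y_{j-1}$, so the earlier blow-ups are isomorphisms over this stratum. Hence the center is not contained in any previous exceptional divisor.
\item Therefore $\nu_j$ equals the codimension of $Y_j$ in $S_0$.
\end{itemize}
To compute this codimension, use the skew-symmetric local reduction of Remark~\ref{rem:simultaneous-local-reduction-other-types}. Near a form of rank $2j-2$ with kernel $K$ of dimension $n-2j+2$, the Schur complement identifies the normal space with $\wedgeTwo K^\vee$, of dimension $\binom{n-2j+2}{2}$. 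Blowing up a smooth center of this codimension contributes discrepancy $\binom{n-2j+2}{2}-1$, so $\nu_j=\binom{n-2j+2}{2}$.

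The case $j=r$ with $n=2r$ needs a separate check. There $E_r$ is the retained strict transform of the Pfaffian hypersurface, so $\nu_r=1=\binom{2}{2}$, consistent with the formula.

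\textbf{The fiber over the origin.} After the first blow-up, the fiber over $0$ is $E_1^{(1)}=\PP(\wedgeTwo V^\vee)$. Each later center $Y_j^{(j-1)}$ is the closure of a set mapping isomorphically onto $Y_j\setminus Y_{j-1}$, so it is not contained in the current strict transform of $E_1$. It meets that strict transform only in the strict transform of $\PP(Y_j)$.

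Inductively, the full preimage of $0$ after each blow-up equals the strict transform of $E_1$. The key point is that the preimage of a divisor not containing the center is its strict transform plus possibly the new exceptional divisor. The new exceptional divisor is not contained in the preimage of $0$, since its generic point lies over points of $Y_j\setminus Y_{j-1}$, which are nonzero.

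Hence $F=E_1$ set-theoretically, and $E_J^\circ\cap F=E_J^\circ\cap E_1$ is empty unless $1\in J$.

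\textbf{Main obstacle.} The main point needing care is that the relevant codimensions are the Pfaffian ones, $\binom{n-2j+2}{2}$, rather than symmetric ones. This comes from the skew-symmetric Schur complement and from the smoothness of $Y_j\setminus Y_{j-1}$, both of which are standard.
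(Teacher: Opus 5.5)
Your proposal is correct and follows the paper's route: the paper states this lemma without a separate proof, by analogy with Lemmas~\ref{lem:order}, \ref{lem:nu} and \ref{lem:fiber}, and you carry those arguments over to the Pfaffian setting. You read off $N_j$ from Corollary~\ref{cor:skew-symmetric-principalization}, obtain $\nu_j$ as the codimension $\binom{n-2j+2}{2}$ of $Y_j$ via the skew-symmetric Schur complement, and show inductively that $\pi^{-1}(0)$ remains the strict transform of the first exceptional divisor (your observation that each new exceptional divisor maps onto a center not contained in the current preimage of $0$ makes explicit a step the paper leaves terse).
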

Fix $J=\{1=j_0<j_1<\cdots<j_t=b\}\subseteq\{1,\ldots,r\},$ and put $d_\ell := j_\ell-j_{\ell-1},$ $1\leq\ell\leq t.$
Then $d_1+\cdots+d_t=b-1.$ Set $m:=n-2b+2.$ Define the flag variety
$$
\cF_J := \left\{ V=K_0\supset K_1\supset\cdots\supset K_t: \dim(K_{\ell-1}/K_\ell)=2d_\ell\right\}.
$$

Let $$ V\otimes\cO_{\cF_J} =\cK_0\supset\cK_1\supset\cdots\supset\cK_t$$
be the tautological flag, and set $\cG_\ell:=\cK_{\ell-1}/\cK_\ell.$
Then $\rank(\cG_\ell)=2d_\ell,$ and  $\rank(\cK_t)=m.$

For $d\geq1$, define
$$A_{2d}:=\left\{[\omega]\in\PP(\wedgeTwo\C^{2d\,\vee}):\rank(\omega)=2d
\right\}.
$$
Thus $A_{2d}$ is the variety of nondegenerate projective alternating
forms on $\C^{2d}$.

For $m\geq2$ and $a\geq1$, define
$$
U^{\mathrm{skew}}_{m,a}:=\left\{[\omega]\in\PP(\wedgeTwo\C^{m\,\vee}):\rank(\omega)\geq2a
\right\}.
$$

\begin{proposition}
\label{prop:skew-boundary-strata}
The stratum $E_J^\circ\cap F$ is a Zariski locally trivial fibration over $\cF_J$ with fiber
$$
A_{2d_1}\times\cdots\times A_{2d_t}
\times
U^{\mathrm{skew}}_{m_J,r-b+1}.
$$
\end{proposition}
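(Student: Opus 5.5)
The plan is to transcribe the proof of Proposition~\ref{prop:strata} to the alternating setting, substituting $\wedgeTwo K^\vee$ for $\Sym^2K^\vee$ throughout. It rests on the skew-symmetric analogue of Lemma~\ref{lem:simultaneous-local-reduction}, recorded in Remark~\ref{rem:simultaneous-local-reduction-other-types}, which we prove first.

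At a point $\omega$ of rank $2d$, choose a splitting $V=L\oplus K$ with $K=\ker\omega$ and $\omega|_L$ nondegenerate. Write a nearby alternating matrix as $\begin{pmatrix}A&B\\-B^t&C\end{pmatrix}$ with $A$ invertible and skew. The Schur complement congruence replaces $C$ by $C+B^tA^{-1}B$, and this matrix is again skew-symmetric. Under this congruence the Pfaffian ideals transform as $\mathrm{Pf}_{2p}\mapsto\mathrm{Pf}_{2(p-d)}$ of the residual matrix on $K$. This works because congruence by an invertible matrix $g$ multiplies the ideal of $2p$-Pfaffians by a unit, and the Pfaffian ideals of a block sum $\begin{pmatrix}A&0\\0&C'\end{pmatrix}$ with $A$ invertible of size $2d$ equal those of $C'$ shifted by $d$.

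On an $x_{12}$-chart of $\Bl_0(S_0)$, the same computation gives $I_p\OO=t^pI_{p-1}$ of a generic alternating $(n-2)\times(n-2)$ matrix. Here one uses that every nonzero alternating form has some $\omega(v_1,v_2)\neq 0$, so the $\GL(V)$-translates of these charts cover the exceptional divisor. Consequently the whole chain $Y_2\subset\cdots\subset Y_r$, together with the subsequent Gaube--Schober centers, is identified locally, up to a smooth factor, with the residual chain for $(n-2d,\,p-d)$.

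With this residual compatibility, we follow Steps~1--6 of Proposition~\ref{prop:strata}. Take $x\in E_J^\circ\cap F$, and note $F=E_1$ by Lemma~\ref{lem:skew-multiplicity}. Its image in $E_1=\PP(\wedgeTwo V^\vee)$ is a form $[\omega_1]$. The successive centers restricted to $E_1$ are the strict transforms of $\{\rank\le 2\},\{\rank\le4\},\dots$. Since $x$ lies on $E_{j_1}$ but avoids $E_2,\dots,E_{j_1-1}$, we get $\rank\omega_1=2(j_1-1)=2d_1$. Set $K_1=\ker\omega_1$, so $\dim V/K_1=2d_1$, and let $\bar\omega_1\in A_{2d_1}$ be the induced form on $V/K_1$. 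The normal directions to the rank-$2d_1$ stratum are $\wedgeTwo K_1^\vee$, well defined after projectivization up to a line-bundle twist. The residual tower is therefore the Gaube--Schober tower for $\wedgeTwo K_1^\vee$ with index shifted by $d_1$, and we iterate. This produces a flag $V=K_0\supset\cdots\supset K_t$ with $\dim K_{\ell-1}/K_\ell=2d_\ell$, nondegenerate forms $[\omega_\ell]\in A_{2d_\ell}$, and $\dim K_t=n-2(b-1)=m$.

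At the last selected stage, the residual datum is $[\omega_\infty]\in\PP(\wedgeTwo K_t^\vee)$. The later divisors $E_{b+1},\dots,E_r$ correspond to the residual loci $\rank\omega_\infty\le 2,\dots,\le 2(r-b)$. Avoiding them means $\rank\omega_\infty\ge 2(r-b+1)$, that is, $[\omega_\infty]\in U^{\mathrm{skew}}_{m,r-b+1}$. There are two boundary cases.
\begin{itemize}
\item If $b=r$ and the final blow-up is performed, the center is the residual origin, so every direction occurs; this matches $U^{\mathrm{skew}}_{m,1}$.
\item If $n=2r$ (the hypersurface case), then $m=2$ and $U^{\mathrm{skew}}_{2,1}$ is a point, as required.
\end{itemize}
The converse reconstruction and the algebraicity and gluing arguments go through verbatim. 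Kernels of constant-rank alternating forms form algebraic subbundles, so trivializing $\cG_\ell$ and $\cK_t$ over an open $W\subset\cF_J$ yields local triviality with the stated fiber.

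The main obstacle is the local reduction, specifically checking that Pfaffian ideals (rather than minors) behave correctly under congruence and under the blow-up chart substitution $x_{ij}=t c_{ij}$. Pfaffians of $t\cdot M$ scale by $t^p$, and the reduction on the $x_{12}$-chart splits off a $2\times2$ invertible block. I would verify this with the identity $\mathrm{Pf}(gMg^t)=\det(g)\,\mathrm{Pf}(M)$ applied to principal sub-Pfaffians. If that route is messy, the fallback is to use $\sqrt{I_{2p}(M)}$ together with the known equality of the Pfaffian ideal with the radical. That fallback suffices for the set-theoretic stratum description, which is all Proposition~\ref{prop:skew-boundary-strata} needs.
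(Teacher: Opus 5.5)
Your proposal is correct and is essentially the paper's argument. The paper proves this proposition by declaring the proof of Proposition~\ref{prop:strata} valid with $\Sym^2K^\vee$ replaced by $\wedgeTwo K^\vee$, and your skew Schur-complement reduction (residual problem $(n,p)\mapsto(n-2d,p-d)$ at rank $2d$, and $(n-2,p-1)$ on an $x_{12}$-chart of the blow-up) makes explicit the input that the paper only records in Remark~\ref{rem:simultaneous-local-reduction-other-types}.

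One small correction: invariance of the ideal of $2p$-Pfaffians under $M\mapsto gMg^t$ comes from the linear action of $g$ on $\bigwedge^{2p}V^\vee$ (equivalently, the Pfaffian minor-summation formula), not from $\mathrm{Pf}(gMg^t)=\det(g)\,\mathrm{Pf}(M)$ applied to principal blocks. As you observe, though, only the reduced rank loci matter for this stratum statement, and congruence preserves rank.
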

\begin{proof}
The proof is the same as that of Proposition \ref{prop:strata} with the residual symmetric space $\Sym^2K^\vee$ replaced by the residual skew-symmetric space $\bigwedge^2K^\vee$.
\end{proof}

Similar computations show the following Euler characteristics.
\begin{lemma}
\label{lem:skew-flag-euler}
With the notations above,
\begin{itemize}
\item[1.]$$\chi(\cF_J)=\frac{n!}{m!\,(2d_1)!\cdots(2d_t)!}.$$
\item[2.] $$
\chi(A_{2d})=\begin{cases}
1,&d=1,\\
0,&d\geq2.
\end{cases}
$$

\item[3.] $$
\chi\left(U^{\mathrm{skew}}_{m,a}\right)
=
\begin{cases}
\binom m2,&a=1,\\
0,&a\geq2.
\end{cases}
$$
\end{itemize}
\end{lemma}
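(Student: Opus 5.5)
We prove the three items in turn, mirroring Lemmas~\ref{lem:flagchi}, \ref{lem:Qchi} and~\ref{lem:rankchi}.

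For item 1, $\cF_J$ is the partial flag variety of $V$ whose successive quotients have dimensions $2d_1,\ldots,2d_t,m$, with $\sum 2d_\ell+m=n$. Its Schubert cells are indexed by the cosets of $\mathfrak S_{2d_1}\times\cdots\times\mathfrak S_{2d_t}\times\mathfrak S_m$ in $\mathfrak S_n$. Counting cells gives the multinomial coefficient $n!/(m!\prod(2d_\ell)!)$.

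For item 2, let $T=(\C^\times)^{2d}$ act diagonally on $\C^{2d}$. The weights on $\wedgeTwo\C^{2d\,\vee}$ are $e_i+e_j$ with $i<j$, and they are pairwise distinct. So the $T$-fixed points of $\PP(\wedgeTwo\C^{2d\,\vee})$ are exactly the coordinate lines $[x_i\wedge x_j]$, each of which has rank $2$. Such a point is nondegenerate precisely when $2d=2$. Hence $A_{2d}^T$ is a single point for $d=1$ and is empty for $d\ge2$. Then $\chi(A_{2d})=\chi(A_{2d}^T)$ gives the claim.

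For item 3, set $\mathcal R^{\mathrm{skew}}_{m,q}:=\{[\omega]:\rank\omega=2q\}$. A form of rank $2q$ is determined by its kernel $K\in\Gr(m-2q,m)$ together with a nondegenerate projective alternating form on $\C^m/K$. This makes $\mathcal R^{\mathrm{skew}}_{m,q}$ a Zariski locally trivial fibration over $\Gr(m-2q,m)$ with fiber $A_{2q}$. Therefore
\[
\chi(\mathcal R^{\mathrm{skew}}_{m,q})=\binom m{2q}\chi(A_{2q}),
\]
which equals $\binom m2$ for $q=1$ and $0$ for $q\ge2$. Since $U^{\mathrm{skew}}_{m,a}=\bigsqcup_{q\ge a}\mathcal R^{\mathrm{skew}}_{m,q}$, additivity gives $\binom m2$ for $a=1$ and $0$ for $a\ge2$.

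The only point needing care is the local triviality of the kernel fibration. On the constant-rank locus the kernel is an algebraic subbundle, and over an open set of the Grassmannian where the tautological bundles are trivial the fibration becomes a product. As an alternative, item 3 also follows directly from torus fixed points. The fixed points of $\PP(\wedgeTwo\C^{m\,\vee})$ are the $\binom m2$ points $[x_i\wedge x_j]$, all of rank $2$. So $U^{\mathrm{skew}}_{m,1}$, which is all of $\PP(\wedgeTwo\C^{m\,\vee})$, contains all of them, and $U^{\mathrm{skew}}_{m,a}$ for $a\ge2$ contains none. This serves as a cross-check.
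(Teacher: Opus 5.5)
Your proof is correct and is exactly the adaptation the paper intends. Its ``similar computations'' are the skew-symmetric analogues of Lemmas~\ref{lem:flagchi}, \ref{lem:Qchi} and~\ref{lem:rankchi}: the multinomial count of Schubert cells, torus fixed points (here $[x_i\wedge x_j]$, all of rank $2$), and the kernel fibration over $\Gr(m-2q,m)$ with fiber $A_{2q}$. Your fixed-point cross-check for item~3 also stands on its own as a proof, since $\PP(\wedgeTwo\C^{m\,\vee})$ has $\binom m2$ fixed points, all of rank $2$, and each $U^{\mathrm{skew}}_{m,a}$ is $T$-stable.
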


\begin{proof}[Proof of Theorem~\ref{thm:skew-topological-zeta}] By Proposition~\ref{prop:skew-boundary-strata},
$$
\chi(E_J^\circ\cap F)
=
\chi(\cF_J)
\left(
\prod_{\ell=1}^t
\chi(A_{2d_\ell})
\right)
\chi\left(
U^{\mathrm{skew}}_{m_J,r-b+1}
\right).
$$
Lemma~\ref{lem:skew-flag-euler}.2 implies that this expression vanishes unless $ d_\ell=1$ for every $\ell$. By Lemma~\ref{lem:skew-flag-euler}.3, it also vanishes unless $ r-b+1=1,$
that is, $b=r.$ Therefore the only subset $J$ giving a nonzero contribution is $J=\{1,2,\ldots,r\}.$ For this subset, $d_1=\cdots=d_{r-1}=1$ and $m=n-2r+2.$ Hence
$$
\begin{aligned}
\chi(E_{\{1,\ldots,r\}}^\circ\cap F)
&=
\frac{n!}{
2^{r-1}(n-2r+2)!
}
\binom{n-2r+2}{2}
\\
&=
\frac{n!}{
2^{r-1}(n-2r+2)!
}
\cdot
\frac{(n-2r+2)(n-2r+1)}{2}
\\
&= \frac{n!}{2^r(n-2r)!}.
\end{aligned}
$$
By Lemma~\ref{lem:skew-multiplicity}.2, only subsets containing $1$ can contribute. The Euler-characteristic
calculation above shows that every contribution vanishes
except the one corresponding to $J=\{1,\ldots,r\}.$ Therefore
$$
\Ztop_{I_r,0}(s)
=
\frac{
\displaystyle
\frac{n!}{2^r(n-2r)!}
}{
\displaystyle
\prod_{j=1}^r
\left(
(r-j+1)s+\binom{n-2j+2}{2}
\right)
}= \prod_{j=1}^r
\frac{1}{
1+
\displaystyle
\frac{r-j+1}{\binom{n-2j+2}{2}}\,s
}
$$
as desired.
\end{proof}

\begin{remark} \label{rem:decdiff}
The decisive difference between the generic, the symmetric, and the skew-symmetric determinantal cases is the Euler characteristic of the projective nondegenerate factor.

For generic matrices, this factor is $P_d\simeq\PGL_d,$ and $\chi(P_d)=0$ for all $d\geq2.$
Thus only jumps of length $1$ survive, so there is a unique contributing
path $1\longrightarrow2\longrightarrow\cdots\longrightarrow r.$

For skew-symmetric matrices, the nondegenerate factor $A_{2d}$ satisfies
$\chi(A_{2d})=0$ for all $d\geq2.$ Hence only jumps of length $1$ in the Pfaffian indexing survive.
So, once again, only this path $1\longrightarrow2\longrightarrow\cdots\longrightarrow r$
contributes to the Denef--Loeser formula.

For symmetric matrices, the corresponding factor is the space $Q_d$ of
nondegenerate projective quadratic forms, for which $\chi(Q_1)=\chi(Q_2)=1,$ and $\chi(Q_d)=0\quad(d\geq3).$
Therefore both jumps of length $1$ and $2$ survive, producing the recurrence.

$$
\begin{array}{c|c|c}
\text{Matrix type} & \text{Nondegenerate projective factor} & \text{Surviving jumps} \\[2mm]\hline
\text{generic} & \PGL_d & 1 \\[1mm] \text{skew-symmetric} & A_{2d} & 1 \\
[1mm] \text{symmetric} & Q_d & 1,2
\end{array}
$$
\end{remark}

\end{document}